\newtheorem{theorem}{Theorem}[section]
\newtheorem{lemma}[theorem]{Lemma}
\newtheorem{proposition}[theorem]{Proposition}
\newtheorem{corollary}{Corollary}[section]
\theoremstyle{definition}
\newtheorem{definition}{Definition}[section]
\theoremstyle{remark}
\newtheorem{remark}{Remark}[section]
\numberwithin{equation}{section}
\newcommand{\R}{\mathbb R}
\newcommand{\Z}{\mathbb Z}
\begin{document}
\title[Topology of QC spaces]{On the topology of closed manifolds 
with quasi-constant sectional curvature}
\author{Louis Funar}
 \date{\today}
\address{Institut Fourier,
Laboratoire de Mathematiques UMR 5582,   
Universit\'e Grenoble Alpes,
CS 40700, 
38058 Grenoble, France}
\email{louis.funar@univ-grenoble-alpes.fr}

{\abstract
We prove that closed manifolds admitting a  generic metric (see Definition \ref{generic})
whose sectional curvature is locally quasi-constant 
are graphs of space forms. In the more general setting of QC spaces where 
sets of isotropic points are arbitrary, under suitable 
positivity assumption  and for torsion-free fundamental groups 
they are still diffeomorphic to connected sums of 
spherical space forms and spherical bundles over the circle. \\

\noindent 
AMS Math. Subj.Classification:  53C21, 53C23, 53C25, 57R42.
}
\vspace{1cm}

\maketitle

\vspace{1,5cm}

\section{Introduction}
The classification of (locally) conformally flat closed manifolds
seems out of reach in full generality since M. Kapovich proved (see \cite{Ka}) 
that an arbitrary finitely presented group  $G$ is a subgroup 
of a free amalgamated product of the form 
$G * H$  which is a  fundamental group of a conformally
flat closed manifold of dimension (at least) 4. 

On the other hand, simple infinite groups (e.g. Thompson's groups) do not occur 
among fundamental groups of conformally flat closed manifolds.  Indeed the holonomy 
representation would be injective, so the fundamental group has to be linear, but finitely generated 
linear groups are residually finite, according to classical results of Malcev, and hence they cannot be infinite and simple.    
Kamishima (\cite{Kam}), improving previous work by  
Goldman (\cite{Go})  for virtually nilpotent groups, showed that a closed conformally flat 
$n$-manifold with virtually solvable
fundamental group is conformally equivalent to a 
$n$-spherical space form,  an Euclidean space-form or it is 
finitely covered by some Hopf manifold  
$S^1\times S^{n-1}$. Further Goldman and Kamishima in 
\cite{Go-Kam} classified up to a finite covering the conformally flat closed manifolds whose universal covering
admits a complete conformal Killing 
vector field, by adding to the previous list the closed 
hyperbolic manifolds,  their products with a circle 
and conformally homogeneous quotients  of the form 
$(S^n-\lim \Gamma)/\Gamma$, where 
$\lim \Gamma\subset S^{n-2}$ is the limit set of the conformal 
holonomy group $\Gamma$. 

The aim of this paper is to describe the 
topology of a family of conformally flat 
manifolds admitting a  local vector field with respect to which 
the curvature is quasi-constant. This hypothesis is similar in spirit  
to the one used by Goldman and Kamishima (\cite{Go-Kam}), but in our case 
the vector field is not a conformal Killing field, in general,
and the methods of the proof are rather different coming from foliations and 
Cheeger-Gromov's theory.

The first family of spaces under consideration here are manifolds admitting a Riemannian 
metric of quasi-constant sectional curvature of dimension $n\geq 3$ (see the definition below).
These spaces were first studied from a geometrical point of view 
by Boju and Popescu in \cite{BP} and coincide with the 
$k$-special conformally flat spaces considered earlier in a different context 
by B.-Y.Chen and K.Yano (\cite{H,CY}). These were further analyzed by Ganchev and Mihova in \cite{GM}. 
It was noticed in \cite{BF,H} and later 
in \cite{GM} that these are  locally conformally flat manifolds for $n\geq 4$.

\begin{definition}
Let  $\xi$ be a smooth line field on the Riemannian manifold $M$.
We denote by $K(\sigma)$ the sectional curvature 
of the tangent 2-plane $\sigma$ at $p\in M$ and by 
$\measuredangle(\sigma,\xi)$ the angle between $\sigma$ and the line $\xi|_p$.
  
The metric of $M$ is {\em globally $1$-QC} with 
respect to the line field $\xi$ if there exists some function  
$\theta: M\to  (0,\frac{\pi}{2})$ such that for any 
point $p\in M$, and any 2-planes $\sigma_1,\sigma_2$ tangent at $p$ to $M$ verifying   
$ \measuredangle(\sigma_1,\xi)=\measuredangle(\sigma_2,\xi)=\theta(p)$ 
we have:
\[ K(\sigma_1)=K(\sigma_2).\]  
In this case $\xi$ is called a {\em distinguished} line field of the Riemannian manifold $M$. 
Further we say that (the metric of) $M$ is {\em globally} $1$-QC if it is {\em globally $1$-QC} 
with respect to some smooth line field $\xi$.  
\end{definition}

The original definition in \cite{BP,GM} (see also the 
comments of the last section in \cite{BP})  used a vector field instead of a line field and is 
too restrictive for our purposes. 

It is proved in \cite{BP} that the sectional curvature of a globally $1$-QC manifold is constant  along any cone of 2-planes making a constant angle in  $[0,\frac{\pi}{2}]$ with $\xi$, 
in every point. 

Recall that a {\em line} field is a section of the projectivized tangent bundle. 
A line field $\xi$ on $M$ determines a real line bundle, still denoted $\xi$ 
over $M$.  Its first Stiefel-Whitney class $w_1(\xi)\in H^1(M,\Z/2\Z)$ vanishes 
if and only if $\xi$ has a section, namely a  non-zero vector field $\widehat{\xi}$ 
defining the same line field. When this is the case, the line field will be called {\em orientable} and such a vector field 
$\widehat{\xi}$ will be called a {\em lift} of the line field. 
When $\xi$ is non-orientable, its pull-back to the 2-fold cover $\widehat{M}$ of $M$ associated to 
the class $w_1(\xi)$ is orientable and defines a vector field $\widehat{\xi}$ on $M$. 
We call $\widehat{\xi}$ the distinguished vector field associated to the globally $1$-QC manifold 
with distinguished line field $\xi$. 

Throughout this paper  we will use the term {\em cylinder} in its topological acception, 
namely a cylinder is a manifold diffeomorphic to the product of a compact manifold with an interval (which might be open or closed).
More generally, a {\em twisted cylinder} is an interval bundle over a compact manifold, which might possibly be a non-trivial bundle.  

The topology of globally $1$-QC manifolds is rather 
simple, as we will see that they must be  2-fold covered by cylinders.
Further the geometry of globally $1$-QC manifolds  with orientable line field was completely described 
under an additional condition (namely, that the distinguished field 
be geodesic) in (\cite{GM}, section 6): 
these are precisely the warped product Riemannian manifolds 
between some closed space form  and an interval which are called   
sub-projective spaces. Notice that the vector field $\xi$ is (locally) conformally Killing if and only if 
it is Killing and iff  $\xi$ is geodesic.

In order to enlarge the number of possible topologies arising in the definition above we slightly weaken it as follows:

\begin{definition}
The (metric of the) Riemannian manifold $M$ is  called 
\begin{enumerate}
\item {\em locally  $1$-QC},  
if any point of $M$ has an open neighborhood endowed with some line field $\xi$ such that 
the induced metric is globally $1$-QC with respect to $\xi$.  
\item {\em $1$-QC} if any {\em non-isotropic} point for the sectional curvature 
function has an open neighborhood  which is locally $1$-QC.    
\end{enumerate}
\end{definition} 
Note that line fields do not necessarily define a global line field on the manifold $M$.   
Observe that a locally $1$-QC manifold is a $1$-QC manifold.

Recall that a point of a Riemannian manifold  is called {\em isotropic} 
if all its tangent 2-planes have the same sectional curvature. 
Denote by $C$  the set of  isotropic points of $M$.  
In the case of a globally $1$-QC manifold the sectional curvatures $N$ and $H$  
of 2-planes containing  and respectively orthogonal to 
the  distinguished line field $\xi$ will be called {\em vertical} and {\em horizontal} 
curvature functions, respectively. 

\begin{remark}
An open set of dimension at least $3$  consisting of isotropic points has constant sectional 
curvature, by F. Schur's theorem (\cite{Sch}). 
\end{remark}

It will be shown in the next section that both $H$ and $N$ extend smoothly from the globally $1$-QC 
neighborhoods to all of $M$, when the manifold $M$ is $1$-QC.  Moreover, the  set of non-isotropic points 
carries an unique distinguished line field extending the distinguished line fields associated to various 
locally $1$-QC neighborhoods.  

For a $1$-QC manifold $M$ the set  $C$ of its isotropic points 
then consists of those points $p\in M$  satisfying the equation: 
\[ H(p)=N(p).\]

\begin{definition}\label{generic}
The metric on a $1$-QC manifold is {\em generic} if the horizontal curvature function $H$ 
is not eventually constant on $M-C$,  namely  for every 
connected open subset $U\subset M-C$ such that the restriction of $H$ to  $U$ is constant
we have $\overline{U}\cap C=\emptyset$.  
\end{definition}

Our main issue in this paper is to give some insights on the topology of $C$ 
and  $\overline{M-C}$. Results of foliation theory give the description of $M-C$  as twisted cylinders. 
Further, we will prove  that $\overline{M-C}$ 
and $\overline{{\rm int}(C)}$ can be approximated by manifolds with boundary  when 
$M$ is locally $1$-QC.

\begin{definition}
A {\em space form} is a compact Riemannian manifold with constant sectional curvature. 
 
A {\em graph of space forms} is a closed manifold $M$ which can be obtained 
from a finite set of compact space forms with totally umbilical 
boundary components by gluing isometrically 
interval bundles (twisted cylinders) along boundary components. 
The pieces will be accordingly called {\em vertex} manifolds and {\em edge} 
manifolds or {\em tubes}. 
\end{definition}

Note that the induced metric on the graph of space forms is not necessarily smooth and its 
diffeomorphism type depends on additional choices, like the gluing maps. 
In particular, fiber bundles over the circle whose fibers are closed space forms 
are such graphs of space forms.

The structure of conformally flat closed manifolds whose universal covering 
have a conformal vector field was described by Goldman and Kamishima (see \cite{Go-Kam}).
This includes the case of globally $1$-QC manifolds with geodesic distinguished field $\xi$. 
Motivated by this, our first result is a  general description of the 
topology of the locally $1$-QC spaces, under a mild genericity condition, as follows:

\begin{theorem}\label{arbitrary}
Let $M^n$ be a closed locally  $1$-QC $n$-manifold, $n\geq 3$, whose metric is  generic and   
assumed conformally flat, when $n=3$.  Assume that the rank of $H_1(M-C;\mathbb Q)$ is finite. 
Then $M$ is diffeomorphic to a graph of space forms. 
\end{theorem}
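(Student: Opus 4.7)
The plan is to promote the local warped-product normal form for $1$-QC metrics (due, in the global geodesic case, to Ganchev--Mihova) into a global foliated decomposition of $M$, and then identify the resulting pieces with the vertex and edge manifolds of a graph of space forms. I would work on the orientation double cover whenever necessary so that the distinguished line field $\xi$ lifts to an honest vector field $\widehat{\xi}$, and deduce the statement for $M$ at the end by equivariance under the deck involution.

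First I would globalize the line field and its horizontal foliation. Away from the isotropic locus $\mathrm{Iso}(M)=\{H=N\}$ the direction $\xi$ is intrinsically pinned down by the curvature operator (as the eigendirection realizing the extremal sectional curvature among $2$-planes containing it), so the local $\xi$'s provided by the definition glue to a smooth line field on $M\setminus \mathrm{Iso}(M)$. The orthogonal distribution $\xi^\perp$ is integrable and its leaves are totally umbilical submanifolds of constant leafwise sectional curvature, with local model $g = dt^2 + f(t)^2 g_F$ for $g_F$ a space form metric. Conformal flatness (automatic for $n\geq 4$, hypothetical for $n=3$) pins down the ODE governing the warping function $f$ along $\widehat{\xi}$ and shows that the horizontal leaves form a complete codimension-one foliation of $M\setminus \mathrm{Iso}(M)$ extending smoothly up to the frontier $\partial \mathrm{Iso}(M)$.

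Next I would identify tubes and vertex pieces and glue them. On each connected component $\Omega$ of $M\setminus \mathrm{Iso}(M)$ the horizontal foliation is a proper submersion onto an interval (arc length along $\widehat{\xi}$), because the transverse structure of a warped product has trivial holonomy; compactness of $M$ then forces the leaves to be compact totally umbilical space forms of a fixed topological type, so $\Omega$ is an interval bundle over a compact space form, i.e.\ a tube in the sense of the definition. Each component $V$ of the interior $\mathrm{Iso}(M)^\circ$ has locally constant sectional curvature and is therefore an open space form; I would prove that $\overline V$ is a compact space form with totally umbilical boundary by showing that horizontal leaves from adjacent tubes limit smoothly onto $\partial V$. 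Assembling these vertex and tube pieces along their common totally umbilical boundary components produces the graph-of-space-forms structure, and finiteness of the decomposition follows from compactness of $M$ combined with the rigid metric normal form on each piece.

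The main obstacle will be the regularity of the decomposition across the frontier $\partial \mathrm{Iso}(M)$: one must exclude pathological behaviour of the isotropic set (for instance, a frontier of positive codimension less than one, or an isotropic set whose complement is not a manifold with boundary) and show that horizontal leaves close up smoothly onto $\partial V$ rather than, say, collapsing tangentially or accumulating. This is where conformal flatness enters decisively, through the rigid ODE satisfied by $f$ along $\widehat{\xi}$ and the compatibility it forces on the transition between the regions $H=N$ and $H\neq N$; controlling this transition in dimension $3$, where Weyl vanishes automatically but conformal flatness does not, is precisely what forces the extra $n=3$ assumption in the statement.
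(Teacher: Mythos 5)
Your high-level plan — recognize the curvature foliation on $M\setminus C$, show its leaves are compact space forms, compactify the components of $M\setminus C$ as tubes (interval bundles), and glue them to the constant-curvature interior $\mathrm{int}(C)$ — matches the decomposition the paper constructs. But two central steps are asserted rather than proved, and one of them rests on a premise that fails in this generality.

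First, the warped-product local normal form $g = dt^2 + f(t)^2 g_F$ is \emph{not} the local structure of a $1$-QC metric. As the paper notes, Ganchev--Mihova obtain that normal form \emph{only under the extra hypothesis that the distinguished field $\widehat{\xi}$ is geodesic} (equivalently, conformally Killing). The paper deliberately works with the weaker line-field definition precisely to drop that hypothesis. Without geodesy of $\widehat{\xi}$, the flow of $\widehat{\xi}$ does not carry leaves to leaves ($[\widehat{\xi},\mathcal D]\subset\mathcal D$ is equivalent to $\nabla_{\widehat{\xi}}\widehat{\xi}=0$), so there is no ``time'' coordinate $t$ making the foliation a mapping cylinder, and the ODE for $f$ that you want to lean on does not exist. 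Everything downstream of this step — trivial holonomy of the transverse structure, properness of the submersion onto an interval — is therefore not available as you state it.

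Second, and relatedly, the assertion that ``compactness of $M$ forces the leaves to be compact'' is not a valid inference for a codimension-one foliation of a closed manifold (Reeb components, dense leaves, etc.\ are the obvious counterexamples). This is exactly the hard core of the paper's proof (Proposition \ref{compactleaf}). The paper establishes it not from a warped-product model but by exploiting that $H$ is a smooth proper function constant on leaves: non-critical leaves are components of compact level sets of $H$ hence compact; critical leaves are totally geodesic (Lemma \ref{geodesic}), and if a critical leaf were non-compact its Novikov component would carry a transversely orientable totally geodesic foliation with a compact boundary leaf, which by Carri\`ere--Ghys is a product — a contradiction. A separate case analysis handles the endpoint situation where $H$ is globally constant on a component, using that there $N\equiv 0$, which is incompatible with having isotropic limit points. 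None of these mechanisms — the proper function $H$, the Novikov decomposition, the totally geodesic dichotomy for critical leaves, the Carri\`ere--Ghys rigidity — appears in your proposal, and nothing in your argument substitutes for them.

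Finally, the ``main obstacle'' you flag — smooth closure of leaves onto $\partial\mathrm{Iso}(M)$ and manifold structure of $\overline{M-C}$ — is real, but you leave it unaddressed. The paper resolves it with Haefliger's stability theorem (the union of compact leaves is compact), showing the limit locus $\mathcal L = \overline{M-C}\cap C$ is a compact lamination and identifying $\overline{E}$ as an interval bundle; the non-orientable line-field case requires the additional involution analysis of Proposition \ref{Ibundles}, which your double-cover remark does not supply. To be a proof, your sketch would need to replace the warped-product premise with the actual totally umbilical/constant-leaf-curvature structure, and then fill in essentially all of the paper's Propositions \ref{compactleaf}, \ref{boundary}, and \ref{Ibundles}.
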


\begin{remark}
The same result holds under the assumption that $H_1(M-C;\Z/2\Z)$ is finite. 
\end{remark}

The {\em curvature leaves function} $\lambda$ is defined at non-isotropic points 
(see also the section \ref{leafcurv}) by: 
\[
\lambda=H + \frac{\parallel {\rm grad} H\parallel^2}{4(H-N)^2}
\]

Our second result aims to extend Theorem \ref{arbitrary} to the case of $1$-QC manifolds under additional hypotheses.

\begin{theorem}\label{lambdapositive}
Let $M^n$,  $n\geq 3$, be a closed  $n$-manifold with infinite torsion-free 
$\pi_1(M)$, admitting a $1$-QC metric with orientable distinguished line field, which is conformally flat when $n=3$.
Moreover, suppose  $\lambda >0$ on $M-C$ and that every tangent $2$-plane at an isotropic point has 
positive sectional curvature.
Then $M$ is diffeomorphic to a connected sum of  $S^{n-1}$-bundles over $S^1$ (possibly not orientable).
\end{theorem}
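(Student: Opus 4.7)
The plan is to upgrade the graph of space forms decomposition supplied by Theorem \ref{arbitrary} to the $1$-QC setting by using the positivity hypotheses to control the geometry near the isotropic set $C$, and then to exploit torsion-freeness of $\pi_1(M)$ to identify each piece topologically.

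First I would analyse the local structure at an isotropic point $p\in C$. Since every tangent $2$-plane at $p$ has positive sectional curvature and $H,N$ extend smoothly across $C$, we have $H(p)=N(p)>0$. On $M-C$ the orthogonal distribution to $\widehat{\xi}$ is integrable with totally umbilical leaves of constant sectional curvature $\lambda$, and the integral curves of $\widehat{\xi}$ are geodesics; since $\lambda>0$, Myers' theorem forces every complete leaf to be a closed spherical $(n-1)$-space form. As one approaches $p$ along the geodesics tangent to $\widehat{\xi}$, the leaves can only be glued continuously at $p$ as a focal point of a warped product $dt^2+f(t)^2 g_F$ with $f$ having a simple zero; the positivity of the sectional curvatures at $p$ then forces a neighbourhood of $p$ to be isometric to a round spherical cap of constant curvature $H(p)$.

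Next I would assemble this local picture into a global decomposition: $M$ is a graph of space forms in the sense of Definition 1.3 whose vertex manifolds are (quotients of) spherical $n$-space forms with finitely many geodesic balls removed, and whose edge manifolds are interval bundles over spherical $(n-1)$-space forms. Applying van Kampen to this decomposition, $\pi_1$ of each piece maps into $\pi_1(M)$ in such a way that any torsion element of $\pi_1(M)$ is conjugate into a vertex or edge fundamental group. Since $\pi_1(M)$ is torsion-free and each such group is finite, all of them must be trivial. Consequently every vertex manifold is $S^n$ with finitely many open balls removed, and every edge manifold is either $S^{n-1}\times[0,1]$ or the twisted interval bundle over $S^{n-1}$.

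Finally the topological reassembly is standard: each tube joining two distinct vertex manifolds (along their $S^{n-1}$-boundaries) realises a connected sum with $S^n$, which is trivial, while each tube returning to the same vertex (a loop in the underlying graph) contributes a summand of $S^1\times S^{n-1}$ or $S^1\times_{-1}S^{n-1}$ depending on whether its interval bundle is trivial or twisted. Since $\pi_1(M)$ is infinite, there must be at least one such loop, whence $M$ is a connected sum of copies of $S^1\times S^{n-1}$ and $S^1\times_{-1}S^{n-1}$, i.e.\ an $n$-sphere with $1$-handles. If $C=\emptyset$ there are no caps and the underlying graph consists only of edges; connectedness of $M$ forces it to be a single loop, so $M$ fibres over $S^1$ with fibre a spherical space form, which torsion-freeness reduces to $S^{n-1}$. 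The principal obstacle is the first step: $M$ is not assumed locally $1$-QC near isotropic points, so one must leverage the pointwise positive curvature together with the focal behaviour of geodesics tangent to $\widehat{\xi}$ to rigorously identify neighbourhoods of $C$ with round spherical caps; without this the graph of space forms decomposition cannot be extended across $C$.
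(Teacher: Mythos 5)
Your proposal rests on a local rigidity claim near isotropic points that is not available in the $1$-QC (as opposed to local $1$-QC) setting, and this is a fatal gap. You argue that near a point $p\in C$ the metric has the form of a warped product $dt^2+f(t)^2 g_F$ with $f$ having a simple zero, so that a neighbourhood of $p$ is a round spherical cap. This cannot work as stated. First, the integral curves of $\widehat{\xi}$ are not geodesics in general: the paper explicitly remarks that $\widehat{\xi}$ is geodesic if and only if it is (conformally) Killing, which is an extra hypothesis used in Ganchev--Mihova's classification and is not assumed here. Second, the isotropic set $C$ is an arbitrary closed set, not a set of isolated focal points; the $1$-QC hypothesis constrains the metric only on $M-C$, and there is no reason for the metric to be of constant curvature (or of warped product type) on any neighbourhood of $C$. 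Having all sectional curvatures equal at the single point $p$ gives no constraint on neighbouring points. You flag this step yourself as ``the principal obstacle,'' and indeed it is unresolved; you cannot appeal to Theorem~\ref{arbitrary} either, because that theorem requires local $1$-QC, not just $1$-QC.

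The paper avoids this entirely by \emph{not} trying to rigidify the metric across $C$ pointwise. Instead it (i) builds saturated neighbourhoods $W_\varepsilon$ of $C$ bounded by curvature leaves, (ii) uses Grushko's theorem together with a volume bound (Lemma~\ref{caps}) to show there are only finitely many pieces $N_i(\varepsilon)$ and finitely many connecting tubes, (iii) caps each boundary sphere of $N_i(\varepsilon)$ with a hyperspherical cap in $\mathbb{H}^{n+1}_\kappa$ (available because $H>0$ near $C$), producing a closed manifold $\widehat{N_i(\varepsilon)}$ with controlled volume and diameter whose integral anisotropy tends to zero as $\varepsilon\to 0$, and (iv) invokes Nikolaev's theorem to conclude each capped piece is diffeomorphic to a space form. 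The torsion-free hypothesis is then used at the level of curvature leaves and piece fundamental groups to upgrade spherical space forms to spheres. Your final reassembly step (loops contribute $S^1\times S^{n-1}$ or $S^1\times_{-1}S^{n-1}$; $C=\emptyset$ gives a sphere bundle over $S^1$) matches the paper's endgame, but the core of the theorem is precisely the step your argument leaves open.
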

By convention a connected  sum of an empty set of $n$-manifolds is the $n$-sphere. 

The main issue is the description of degeneracies occurring  when compactifying $M-C$. One key ingredient  is the characterization of compact $1$-QC manifolds as those Riemannian manifolds whose universal coverings have codimension one isometric immersions into hyperbolic spaces (see 
Propositions \ref{immerse} and \ref{characterize}). The topology of closed manifolds  admitting such 
codimension one immersions is known, as this problem already appeared in conformal geometry. Kulkarni  has proved in \cite{Kul}  that closed orientable 
conformally flat manifolds  which admit conformal embeddings as hypersurfaces in 
$\mathbb R^{n+1}$ are conformally equivalent to  either the round $n$-sphere or 
some Hopf manifold $S^1\times S^{n-1}$, if  both 
the immersion and the metric are {\em analytic}. Moreover, in \cite{DDM,Pi} the authors proved that closed conformally flat manifolds  which admit conformal embeddings as hypersurfaces in some 
hyperbolic of Euclidean space are diffeomorphic and respectively 
conformally equivalent to an $n$-sphere with several (possibly unoriented) 
$1$-handles, i.e. connected sums of $S^{n-1}$-bundles the circle. We use the methods from \cite{DDM,Pi} suitably extended 
to our non-compact situation, in order to analyze $\overline{M-C}$.  We will show that a suitable neighborhood 
of $\overline{{\rm int}(C)}$ in $M$ can be obtained from a closed space form by removing finitely many 
disjoint disks. To this purpose we prove that one can cap off its boundary spheres by using spherical caps with controlled geometry so that the manifold obtained this way is diffeomorphic to a space form, according to a theorem of Nikolaev (\cite{Nik}).

\begin{remark}
A classical result of Kuiper states that a closed simply connected conformally flat manifold is conformally equivalent 
to the round sphere (see \cite{Ku1}). Kuiper worked under $\mathcal C^3$ differentiability assumptions, but 
this result is known to hold under the weaker $\mathcal C^1$-assumptions. 
If  $M$ is a closed  $1$-QC manifold with finite $\pi_1(M)$, then its universal covering $\widetilde{M}$  
is therefore conformally equivalent to the round sphere. 
\end{remark}

\begin{remark}
If $H, N >0$, then   
classical results of Bochner, Lichnerowicz and Myers show 
that the manifold is a homology sphere with finite 
fundamental group. In particular manifolds with a positive number of 1-handles in theorem \ref{lambdapositive} 
cannot have metrics with positive $N$. 
\end{remark}
\begin{remark}
If $(n-2)H + 2N \geq 0$ then $M^n$ is conformally equivalent to a 
Kleinian quotient (see \cite{SY}, Thm. 4.5). Moreover, if the inequality above is strict, then  
$\pi_j(M)=0$, for  $2\leq j\leq n/2$ (see \cite{SY}, Thm. 4.6).
\end{remark}

\begin{remark}
Rational Pontryagin  forms, and hence classes,  
vanish on  conformally flat manifolds, see \cite{CS,Kul2}.
Conversely, it seems unknown whether conformally flat 
manifolds admit conformally flat metrics for which the 
curvature tensor in suitable basis has all its components with 
3 and 4 distinct indices vanishing.
\end{remark}

Throughout this paper (local) $1$-QC manifold or space 
will mean a manifold which admits a Riemannian 
(local) $1$-QC metric.

\tableofcontents

The structure of this paper is as follows. In section 2 we collect a number of general results concerning 
1-QC manifolds for further use. We show that the horizontal and vertical curvature functions extend smoothly 
to the entire manifold first in the locally 1-QC case (Lemma  \ref{local-isotrop}), then using the higher Weitzenb\"ock  curvatures in the general 
1-QC case (Lemma  \ref{continuous}). In dimensions $n\geq 4$ 1-QC manifolds are  conformally flat, while this has to be added to our assumptions when $n=3$ (see Lemma \ref{conformal}). 
The level hypersurfaces of the horizontal curvature define a partial foliation with complete leaves which are 
totally umbilical submanifolds of intrinsic constant curvature $\lambda$. We show that for locally $1$-QC manifolds 
$\lambda$ extends to the closure of the open set of non-isotropic points (Lemma  \ref{extension-lambda}). 
Eventually, we show that the 1-QC manifolds of dimension $n\geq 4$  are precisely those Riemannian 
manifolds whose universal coverings have codimension one isometric immersions into the hyperbolic space, which are 
equivariant with respect to some holonomy homomorphism (Propositions  \ref{immerse}-\ref{characterize}).

Section 3 is devoted to the proof of Theorem \ref{arbitrary}. 
We start by describing the topology of globally 1-QC manifolds with non-empty set of isotropic points: 
they are twisted cylinders over compact space space forms (Proposition  \ref{dicho}), since the curvature leaves 
are compact (Proposition \ref{compactleaf}). This is a consequence of deep results of Reckziegel (\cite{Re2,Re}) concerning the completeness 
of curvature leaves and Haefliger's classical theory of foliations with compact leaves (\cite{Hae}). 
In section \ref{Uniformgeom} we collect a few results to be used later about the geometry of the  curvature leaves, 
by showing that their second fundamental form,  injectivity radius and (local) normal injectivity are uniformly bounded.  
The existence of a well-defined distinguished vector field $\widehat{\xi}$ in a neighborhood of an isotropic point shows that 
curvature leaves have a controlled behavior when they approach the isotropic point, by limiting to some branched 
hypersurface (Proposition  \ref{integrability}).  
We  further need to analyze the degeneracies occurring in the closure of these components by using the 
branched hypersurfaces which arise as limit leaves (Proposition \ref{bdy}). We show that we can choose finitely many limit leaves 
such that the  open isotropic components can be compactified to manifolds with boundary (Propositions 
\ref{cylinderneighb} and \ref{boundary}). By gluing isometrically some 
cylinders endowed with a warped product metric  along the boundary we construct space forms with umbilical boundary. 

In section 4 we are concerned with the proof of Theorem \ref{lambdapositive}.  The distinguished line field 
cannot be extended to isotropic points as in the previous chapter but the curvature leaves are now constrained to be 
diffeomorphic to spheres, as they have positive curvature. 
Every connected component of a saturated neighborhood of the set of isotropic points is a manifold whose boundary 
components are spheres (Proposition  \ref{compact}).
Group theory arguments involving Gruschko's theorem prove that there are only finitely many 
spheres which do not bound a ball (Proposition  \ref{essential}).  The intrinsic curvature of a spherical leaf provides control on the 
size of a spherical cap in the hyperbolic space which bounds that leaf (Lemmas  \ref{caps} and  \ref{cap}).   We then prove that 
we can realize the capping off  the boundary spheres  in the Riemannian setting and the closed manifold obtained 
this way is diffeomorphic to a space form, according to a theorem of Nikolaev (\cite{Nik}) (see Proposition  \ref{isotrop}). 

In chapter 5 we complete the description of  isometric immersions of $1$-QC conformally flat manifolds with  the 
case of dimension $n=3$ (Proposition  \ref{immerse3}).  Further, we note that a $1$-QC metric admits an extension to a 
hyperbolic metric on the product $M\times [0,\infty)$ (Proposition \ref{hyperbolicmetric}).

\vspace{0.2cm}

{\bf Acknowledgements}. The author is indebted to Gerard Besson, 
Zindine Djadli, Bill Goldman,  Luca Rizzi, Vlad Sergiescu, Ser Peow Tan and Ghani Zeghib for 
useful discussions and to the referees for 
helpful suggestions and pointing out some incomplete arguments.  
The author was partially supported by the ANR 2011 BS 01 020 01 ModGroup and 
he thanks the Erwin Schr\"odinger Institute for hospitality and 
support.

\section{Preliminaries}

\subsection{Isotropic points of  local $1$-QC manifolds}\label{section-isotropic}

\begin{lemma}\label{isotropy}
If $M$ is locally $1$-QC with respect to distinct distinguished line fields at $p$ then 
$p$ is an isotropic point. 
\end{lemma}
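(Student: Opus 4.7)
The strategy is to translate the local $1$-QC hypothesis at $p$ into an invariance statement for the sectional curvature function on $G_2(T_pM)$, and then show that invariance under two different distinguished stabilizers forces $O(n)$-invariance.

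Using the strengthened form of the definition recalled above from \cite{BP} (the sectional curvature is constant on every angular cone with axis $\xi$, not only on a distinguished one), local $1$-QC at $p$ with distinguished line $\xi$ is equivalent to the sectional curvature function $K\colon G_2(T_pM)\to\R$ being invariant under
\[
\mathrm{Stab}(\xi)=O(\xi)\times O(\xi^\perp)\subset O(T_pM),
\]
because a straightforward orthonormal-frame argument shows that the orbits of this stabilizer on $G_2(T_pM)$ are exactly the angular cones with axis $\xi$. Thus the two $1$-QC hypotheses force $K$ to be invariant under the subgroup $G\subset O(T_pM)$ generated by $\mathrm{Stab}(\xi_1(p))$ and $\mathrm{Stab}(\xi_2(p))$.

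The crux is to show that $G\supset SO(T_pM)$ whenever $\xi_1(p)\neq\xi_2(p)$. I would argue at the Lie algebra level: choose an orthonormal basis $e_1,\dots,e_n$ with $e_1\in\xi_1(p)$ and $\xi_2(p)\subset\mathrm{span}(e_1,e_2)$, and write $\phi\in(0,\pi/2]$ for the angle between the two lines. The subspace $\mathfrak{so}(\xi_1(p)^\perp)+\mathfrak{so}(\xi_2(p)^\perp)$ already accounts for every standard generator $e_i\wedge e_j$ except $e_1\wedge e_2$: $\mathfrak{so}(\xi_1(p)^\perp)$ contributes all $e_i\wedge e_j$ with $i,j\geq 2$, and the elements $(-\sin\phi\,e_1+\cos\phi\,e_2)\wedge e_i\in\mathfrak{so}(\xi_2(p)^\perp)$ combine with these to yield $e_1\wedge e_i$ for $i\geq 3$. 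A single bracket computation between $e_2\wedge e_3\in\mathfrak{so}(\xi_1(p)^\perp)$ and $(-\sin\phi\,e_1+\cos\phi\,e_2)\wedge e_3\in\mathfrak{so}(\xi_2(p)^\perp)$ then produces $\sin\phi\,(e_1\wedge e_2)$, and the nonvanishing of the factor $\sin\phi$ is exactly where the hypothesis $\xi_1(p)\neq\xi_2(p)$ enters. Hence the generated Lie algebra is all of $\mathfrak{so}(T_pM)$ and $G\supset SO(T_pM)$. Since $K$ is then $SO(n)$-invariant on the connected Grassmannian $G_2(T_pM)$, it is constant, which by definition means that $p$ is isotropic.

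The main technical point is the Lie algebra computation above. It works uniformly in all dimensions $n\geq 3$: the auxiliary vector $e_3$, available precisely because $n\geq 3$, is what makes the one required bracket nontrivial, so no separate treatment of the low-dimensional case is needed.
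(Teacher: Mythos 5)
Your proof is correct, and it takes a genuinely different route from the paper's. The paper argues directly from the explicit $1$-QC curvature formula: it writes the sectional curvature of $\sigma=\mathrm{span}(X,Z)$, $Z\perp X$, both intrinsically (giving a constant $H_X$) and via the $Y$-formula, obtaining $H_Y+\bigl(g(Y,X)^2+g(Y,Z)^2\bigr)(N_Y-H_Y)=H_X$, and then observes that if $N_Y\neq H_Y$ the quantity $g(Y,Z)^2$ would have to be independent of the choice of $Z\perp X$, which (for $n\geq 3$) forces $g(Y,Z)\equiv 0$ and hence $Y\parallel X$. You instead repackage the $1$-QC hypothesis, via the fact from \cite{BP} that the curvature depends only on the angle with $\xi$, as $\mathrm{Stab}(\xi)=O(\xi)\times O(\xi^\perp)$-invariance of $K$ on $G_2(T_pM)$, and then carry out a short Lie-algebra computation showing that $\mathfrak{so}(\xi_1^\perp)$ and $\mathfrak{so}(\xi_2^\perp)$ generate $\mathfrak{so}(T_pM)$ whenever $\xi_1\neq\xi_2$, so $K$ is $SO(T_pM)$-invariant, hence constant. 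Both proofs ultimately rest on the same \cite{BP} input and both require $n\geq 3$ in an essential way (the paper through the $(n-1)$-parameter family of choices of $Z$, you through the auxiliary $e_3$ needed for the bracket). Your version is more conceptual and makes the role of symmetry and the role of $n\geq 3$ completely explicit; the paper's is shorter and bypasses any group-theoretic framing. One small point worth stating, which you implicitly rely on: the group generated by two connected subgroups of a compact Lie group is connected and has Lie algebra equal to the Lie algebra they generate, which is what lets you pass from the bracket computation to $G\supset SO(T_pM)$.
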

\begin{proof}
Let $X$ and $Y$ be unit vectors representing the distinguished lines fields at $p$. 
According to \cite{BP} the curvature of a 2-plane $\sigma$ is given by 
\[ K(\sigma)=H_X \sin^2\measuredangle (X,\sigma) + N_X \cos^2\measuredangle (X,\sigma)=
H_Y \sin^2\measuredangle (Y,\sigma) + N_Y \cos^2\measuredangle (Y,\sigma),\]
where $H_X, N_X$ and $H_Y, N_Y$ are the vertical and horizontal curvatures associated to the respective lines fields. 

Take $\sigma$ to be the span of the orthogonal unit vectors $X$ and $Z$. If $g$ denotes the metric on $M$, then:
\[K(\sigma)=H_Y + (g(Y,X)^2+ g(Y, Z)^2)(N_Y-H_Y)=H_X.\]
If $H_Y=N_Y$ then $p$ is isotropic. Otherwise $g(Y, Z)^2$ should not depend on the choice of  
$Z$ orthogonal to $X$ and hence $g(Y, Z)=0$, so that the lines defined by $Y$ and $X$ coincide, contradicting our hypothesis.  
\end{proof}

\begin{lemma}\label{local-isotrop}
If $M$ is a compact locally $1$-QC manifold then the functions $H$ and $N$ are well-defined smooth functions on $M$. 
\end{lemma}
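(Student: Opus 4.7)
The plan is to exploit the local 1-QC hypothesis, which supplies every point of $M$ with a globally 1-QC neighborhood together with its own distinguished (smooth) line field. The argument splits into a very short local smoothness computation on such a chart, followed by a consistency check across overlapping charts powered by Lemma \ref{isotropy}. The only conceptual input is that the locally-given line field, though possibly non-unique at isotropic points, is smooth there by definition.

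For the local step, I would fix $p\in M$, choose an open neighborhood $U$ on which the metric is globally 1-QC with respect to a distinguished line field $\xi_U$, and shrink $U$ so that $\xi_U$ is represented by a smooth unit vector field (still denoted $\xi_U$). Then at each $q\in U$ I would define
\[
N_U(q)=R_q(\xi_U,e,e,\xi_U), \qquad H_U(q)=R_q(e_1,e_2,e_2,e_1),
\]
for any unit $e$ and any orthonormal pair $e_1,e_2$ orthogonal to $\xi_U(q)$. Both quantities are smooth functions of the smooth data $R$ and $\xi_U$, and independent of the auxiliary orthonormal choices, so $H_U$ and $N_U$ are smooth on $U$.

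For the gluing step, consider two such charts $(U,\xi_U)$ and $(V,\xi_V)$ and a point $q\in U\cap V$. If $q$ is non-isotropic, Lemma \ref{isotropy} asserts uniqueness of the distinguished line field at $q$, so $\xi_U(q)=\xi_V(q)$ and the two computations coincide. If $q$ is isotropic, every $2$-plane at $q$ has a common sectional curvature $k(q)$, so the defining formulas force $H_U(q)=N_U(q)=k(q)=H_V(q)=N_V(q)$ regardless of the choice of line field. Thus the locally-defined functions agree on overlaps, patch to globally well-defined $H,N:M\to\R$, and are smooth on $M$ because they are smooth on each chart.

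The step one has to watch is the smoothness of $\xi_U$ across isotropic points in $U$: on the non-isotropic locus the line field is intrinsically determined by the curvature tensor (via Lemma \ref{isotropy}), but at isotropic points any direction satisfies the 1-QC identity, so a priori the line field could be only continuous. I expect this to be the main (very mild) obstacle; it is resolved simply by reading the definition of "globally 1-QC" as requiring a smooth line field, so that local 1-QC directly provides smooth $\xi_U$ on each neighborhood. With that in hand, no delicate analysis near the isotropic set $C$ is necessary.
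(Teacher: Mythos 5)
Your proposal is correct and follows essentially the same route as the paper: you define $H_U,N_U$ locally from the curvature tensor and the smooth distinguished line field supplied by the globally $1$-QC chart, invoke Lemma \ref{isotropy} to get consistency on overlaps at non-isotropic points, note that isotropic points are handled trivially since all sectional curvatures coincide there, and glue. The paper's proof does the same, organized around a finite cover and the explicit set $C_{U_iU_j}$ where the local line fields disagree; your observation that the defining formulas are quadratic in the line-field representative (so the $\pm$ ambiguity is harmless) and that smoothness of $\xi_U$ is built into the definition of globally $1$-QC is exactly the point the paper implicitly relies on.
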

\begin{proof}  
For any $p\in M$ there is a line field $\xi_U$ defined on an open neighborhood $U$ of $p$ 
such that the metric is globally $1$-QC on $U$ relative to $\xi_U$. This provides   
the smooth curvature functions $H_U$ and $N_U$ on $U$. 
We extract a finite covering  $\{U_i\}$ of $M$ by such open sets. 

If $U_i\cap U_j\neq \emptyset$ we set 
\[C_{U_iU_j}=\{p\in U_i\cap U_j; \xi_{U_i}|_p \neq  \xi_{U_j}|_p\}\]
Then, by Lemma \ref{isotropy} every point $p\in C_{U_iU_j}$ 
is isotropic for the sectional curvature. This implies that 
\[ H_{U_i}(p)=N_{U_i}(p)=H_{U_j}(p)=N_{U_j}(p).\]

Further, if $p\in U_i\cap U_j\setminus C_{U_iU_j}$, then $\xi_{U_i}|_p = \xi_{U_j}|_p$ and thus 
$H_{U_i}(p)=H_{U_j}(p)$, $N_{U_i}(p)=N_{U_j}(p)$. 

Therefore we have a finite collection of smooth functions $H_{U_i}$ and $N_{U_i}$ whose restrictions 
to intersections  $U_i\cap U_j$ coincide. Thus they define smooth functions on $M$. 
\end{proof}

Recall that $C$ denotes the closed subset of $M$ of isotropic points for the sectional curvature.

\begin{lemma}\label{local-line}
If $M$ is a compact locally $1$-QC manifold then the distinguished line field $\xi$ 
is well-defined on  an open set $\Omega \supset \overline{M-C}$. 
\end{lemma}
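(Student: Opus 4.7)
The plan is to exploit Lemma \ref{isotropy} in order to glue the locally defined distinguished line fields $\xi_{U_i}$ attached to a finite open cover $\{U_i\}$ of $M$ by globally $1$-QC sets (as in the proof of Lemma \ref{local-isotrop}). Since $C$ is closed, $M-C$ is open. At any non-isotropic point $p \in U_i \cap U_j$, Lemma \ref{isotropy} forces $\xi_{U_i}(p)=\xi_{U_j}(p)$, for otherwise $p$ would be isotropic. Consequently the restrictions $\xi_{U_i}|_{U_i \cap (M-C)}$ glue to a single smooth section $\xi$ of the projectivized tangent bundle $\mathbb{P}(TM)$ over $M-C$.

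Next I would extend $\xi$ continuously across the limit points $p \in \overline{M-C}\setminus (M-C)$, which all lie in $C$. Pick any chart $U_i \ni p$. The smooth field $\xi_{U_i}$ agrees with $\xi$ on the open set $U_i \cap (M-C)$, which accumulates at $p$, so by continuity $\xi(p) := \xi_{U_i}(p)$ is forced. This value is independent of the chart: for any two charts $U_i, U_j \ni p$, the smooth sections $\xi_{U_i}$ and $\xi_{U_j}$ coincide on $U_i \cap U_j \cap (M-C)$ and therefore on its closure in $U_i \cap U_j$, which contains $p$.

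Finally, I would thicken $\xi|_{\overline{M-C}}$ to a smooth section on an open neighborhood. Each $p \in \overline{M-C}$ already admits the smooth local extension $\xi_{U_i}$ to a chart $U_i \ni p$, so $\xi$ is smooth on the closed set $\overline{M-C}$ in the Whitney sense (each point has a smooth extension to an open neighborhood). A standard bundle-theoretic extension argument --- realize $\mathbb{P}(TM)$ as the submanifold of rank-$1$ symmetric projectors inside the bundle of symmetric endomorphisms of $TM$, extend via a partition of unity to a smooth section of the ambient vector bundle on an open neighborhood of $\overline{M-C}$, and compose with the nearest-point retraction onto the rank-$1$ locus, available in a tubular neighborhood --- produces the desired smooth line field $\xi$ on an open set $U \supset \overline{M-C}$. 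The main obstacle is that the chart-dependent extensions $\xi_{U_i}$ can genuinely disagree in the interior $C^\circ$ of the isotropic locus, where the $1$-QC condition leaves the distinguished line field undetermined, so one cannot naively define $\xi$ as a union of chart sections on $\bigcup_i U_i$. The resolution rests on the agreement of all chart extensions on the closed set $\overline{M-C}$ together with the bundle-theoretic thickening.
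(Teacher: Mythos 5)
Your proof is correct and follows the same core logic as the paper: Lemma \ref{isotropy} forces agreement of the local line fields on $M-C$, and continuity of the $\xi_{U_i}$ promotes this to well-definedness on $\overline{M-C}$. The only genuine divergence is in the final thickening step. The paper extends $\xi$ from $\overline{M-C}$ to an open neighborhood by a partition-of-unity argument when $\xi$ is orientable, and in the non-orientable case passes to the $2$-fold cover determined by $w_1(\xi)$, extends there, and pushes back down. Your alternative --- embed $\mathbb{P}(TM)$ as the locus of rank-one symmetric projectors in $\mathrm{Sym}(TM)$, extend by partition of unity in the ambient vector bundle, and retract onto the projector locus via a tubular neighborhood --- achieves the same thing without the orientability dichotomy, which is arguably cleaner since it sidesteps having to make sense of $w_1(\xi)$ as a class on $M$ when $\xi$ is a priori only defined over $\overline{M-C}$. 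Both versions are correct; yours trades the cohomological bookkeeping for a small amount of differential-geometric machinery (the tubular-neighborhood retraction), and handles orientable and non-orientable line fields uniformly.
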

\begin{proof}
Consider a finite open cover of $M$ by globally 1-QC subsets $\{U_i\}$, as above. 
We take a refined open cover $\{V_j\}$ such that  every $\overline{V_j}$ is contained into some $U_i$.  
If $p\in (V_i\cap V_j)\setminus C$, then $\xi_{V_i}|_p=\xi_{V_j}|_p$, by  Lemma \ref{isotropy}. 
This closed condition is therefore valid for any point $p\in \overline{(V_i\cap V_j)\setminus C}$. 
It follows that the line fields $\xi_{V_i}$ agree at any point $p\in \overline{M-C}$. They define therefore 
a distinguished line field on $\overline{M-C}$. 

When the line field is orientable, it can be extended to an open set $\Omega$ containing  $\overline{M-C}$, by a 
classical partition of unity argument. Otherwise, we construct an extension on the 2-fold cover of an open neighborhood 
of  $\overline{M-C}$ associated to the first Stiefel-Whitney class of $\xi$ and then project it down. 
 \end{proof}

\subsection{Weitzenb\"ock curvatures} 
The $m$-th Weitzenb\"ock curvatures were explicitly computed 
for manifolds with vanishing Weyl tensor by Guan and Wang (\cite{GW}), as follows. 

Let $\Delta$ be the Hodge-de Rham Laplacian 
$\Delta=dd^*+d^*d$ and $\nabla^*\nabla$ be the Bochner  Laplacian of the Riemannian manifold $M$. 
Recall that $\nabla:\Omega^m(M)\to T^*(M)\otimes \Omega^m(M)$ is the covariant derivative
induced by the Levi-Civita connection on  the space of $m$-forms  $\Omega^m(M)$ and $\nabla^*$ its adjoint, namely
\[ \nabla^*\omega(X_1,\ldots,X_m)=-\sum_{i=1}^n\nabla_{e_i}\omega(e_i,X_1,\ldots,X_m) ,\] 
for any vector fields $X_j$ and an orthonormal local basis $\{e_i\}$. 

If $\omega$ is a $m$-form, then the Weitzenb\"ock formula states that: 
\[ \Delta \omega = \nabla^*\nabla + \mathcal R_m \omega.\]
Here $\mathcal R_m$ is the $m$-th Weitzenb\"ock curvature operator, namely a degree zero pseudo-differential operator 
determined by the Riemann tensor:
\[  \mathcal R_m\omega= \sum_{i,j=1}^n (R(e_i,X_j)\omega)(X_1,\ldots, X_{j-1},e_i,X_{j+1}\ldots,X_m)\]
The Riemannian connection induces a metric connection in the bundle $\Omega^m(M)$ and its curvature 
$R$ used above is a section of $\Omega^2(M)\otimes \Omega^m(M)$. 

Let now consider the real function defined on $M$ by the formula:  
\[ G_m=\left( (n-p)\sum_{i=1}^p\lambda_i +p\sum_{i=p+1}^n\lambda_i\right),\]
where $\lambda_1\leq \lambda_2\leq \cdots \leq \lambda_n$ denote 
the eigenvalues of the Schouten operator $S$ defined as
\[ S(X,Y)= \frac{1}{n-2}\left(Ric(X,Y)-\frac{1}{2(n-1)}R g(X,Y)\right)\] 
in terms of the Ricci tensor $Ric$ and the scalar curvature $R$. 
Assume  that the Weyl tensor of $M$ vanishes so that 
the Riemann tensor satisfies  the following identity (see \cite{La}):
\[ R(X,Y,Z,T)=g(Y,Z)S(X,T)-g(X,Z)S(Y,T)+g(X,T)S(Y,Z)-g(Y,T)S(X,Z)\]
It follows by direct calculation (see \cite{GW}, Appendix)  that $\mathcal R_m \omega = G_m \omega$.  
In particular we obtain the following:
\begin{lemma}\label{weitz}
Let $M$ be a Riemannian $n$-manifold, $n\geq 3$, whose Weyl tensor vanishes. 
Then the function $G_m:M\to \R$ is  smooth. 
\end{lemma}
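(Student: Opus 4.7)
The plan is to avoid any direct analysis of the eigenvalues $\lambda_1\leq\cdots\leq\lambda_n$ of $S$ themselves and instead to recognize $G_m$ as the scalar by which an \emph{a priori} smooth tensor field acts on $\Omega^m(M)$. It is worth emphasizing the apparent difficulty first: the ordered eigenvalues of a smooth family of pointwise symmetric operators are in general only continuous (indeed Lipschitz) functions on $M$, and can fail to be smooth precisely at points where the multiplicities of $S$ jump. Thus it is not at all evident from the bare definition of $G_m$ as a weighted sum of ordered eigenvalues of the Schouten operator that $G_m$ should extend to a smooth function, and a naive attempt to differentiate the displayed formula for $G_m$ breaks down exactly at the points where two successive eigenvalues of $S$ coincide.

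The key observation is that the $m$-th Weitzenb\"ock curvature operator $\mathcal R_m$, being constructed polynomially from the Riemann curvature tensor via the formula
\[ \mathcal R_m\omega = \sum_{i,j=1}^n (R(e_i,X_j)\omega)(X_1,\ldots,X_{j-1},e_i,X_{j+1},\ldots,X_m), \]
defines a smooth section of the endomorphism bundle of $\Omega^m(M)$, with no hypothesis whatsoever on the Weyl tensor. Under the assumption that the Weyl tensor vanishes, the algebraic identity
\[ R(X,Y,Z,T) = g(Y,Z)S(X,T) - g(X,Z)S(Y,T) + g(X,T)S(Y,Z) - g(Y,T)S(X,Z) \]
combined with the direct calculation from (\cite{GW}, Appendix) recalled above yields the pointwise identity $\mathcal R_m\omega = G_m\omega$ for every $m$-form $\omega$. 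In other words, the smooth endomorphism $\mathcal R_m$ acts on each fiber of $\Omega^m(M)$ as scalar multiplication by $G_m$.

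Once the identity $\mathcal R_m = G_m\cdot\mathrm{Id}_{\Omega^m(M)}$ is established, smoothness of $G_m$ is automatic. Taking the fiberwise trace gives
\[ G_m = \frac{1}{\binom{n}{m}}\,\mathrm{tr}\bigl(\mathcal R_m\bigr), \]
which is smooth because the fiberwise trace of a smooth endomorphism of a smooth vector bundle is a smooth function on the base. Equivalently, over any coordinate chart one may pick a nowhere-vanishing $m$-form $\omega_0$ and locally recover $G_m = \langle \mathcal R_m\omega_0, \omega_0\rangle / \|\omega_0\|^2$, which is manifestly smooth and glues unambiguously thanks to the pointwise scalar nature of $\mathcal R_m$. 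I do not anticipate any genuine obstacle in this argument: the conceptual content of the lemma is precisely the observation that a natural symmetric combination of eigenvalues inherits smoothness from an ambient tensorial invariant, even when the individual eigenvalues do not.
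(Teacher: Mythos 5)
You follow the paper's stated approach: both you and the paper invoke the identity $\mathcal R_m\omega = G_m\omega$ and conclude smoothness of $G_m$ from smoothness of $\mathcal R_m$. However, the trace step you introduce to extract $G_m$ is incorrect, and it exposes a real problem with reading that identity as "$\mathcal R_m = G_m\cdot\mathrm{Id}$." For a locally conformally flat manifold, in a basis of $T_pM$ diagonalizing the Schouten operator, $\mathcal R_m$ is diagonal on decomposable forms $e_{i_1}\wedge\cdots\wedge e_{i_m}$ with eigenvalue $(n-m)\sum_{k\in I}\lambda_k + m\sum_{l\notin I}\lambda_l$, where $I=\{i_1,\dots,i_m\}$. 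These numbers depend on $I$; the quantity $G_m$ is only the extremal one, attained for $I=\{1,\dots,m\}$. Already for $m=1$ one has $\mathcal R_1=\mathrm{Ric}$, which on a $1$-QC manifold has two distinct eigenvalues $(n-2)H+N$ and $(n-1)N$ at non-isotropic points, so $\mathcal R_1$ is certainly not scalar there. Consequently the fiberwise trace you use is
\[
\mathrm{tr}\,\mathcal R_m \;=\; \tfrac{2m(n-m)}{n}\binom{n}{m}\,\mathrm{tr}(S),
\]
a multiple of the scalar curvature, not $\binom{n}{m}G_m$; the formula $G_m=\mathrm{tr}(\mathcal R_m)/\binom{n}{m}$ is false except when $n=2m$ (the one case where $\mathcal R_m$ genuinely is scalar). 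So the trace argument does not yield smoothness of $G_m$, and the concern you flag at the outset — that an extremal ordered eigenvalue of a smooth family of symmetric operators need not be smooth — is not circumvented by your argument. The correct input from Guan--Wang is that $G_m$ is the lowest eigenvalue of the smooth endomorphism $\mathcal R_m$; turning that into smoothness of $G_m$ requires an actual argument and cannot be dismissed as automatic. This gap is present in the paper's terse presentation as well, but your trace step makes the erroneous deduction explicit.
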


\subsection{Isotropic points of  $1$-QC manifolds}\label{section-extension}
In this section $M$ is a $1$-QC manifold and  $C$ denotes the closed set of its isotropic points. 
The first observation is: 
\begin{lemma}\label{global}
If $M$ is a compact $1$-QC then $M-C$ is globally $1$-QC. In particular $H$ and $N$ are smooth 
real functions defined on $M-C$. 
\end{lemma}
\begin{proof}
According to the definition $M-C$ is locally $1$-QC, hence it admits a covering by open sets $U_i\subset M-C$
which are globally $1$-QC with respect to line fields $\xi_{U_i}$.   
However, points $p\in U_i\cap U_j$ where the  corresponding line fields do not agree should be 
isotropic, following  Lemma \ref{isotropy} and the proof of Lemma \ref{local-isotrop}, which  
contradicts $p\in M-C$. Thus the distinguished 
line field is well-defined and  $M-C$ is a globally $1$-QC manifold.
\end{proof}

\begin{lemma}\label{weyl}
The Weyl tensor of $M$ vanishes.
\end{lemma}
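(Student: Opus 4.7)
My plan is to decompose $M$ into the open set $M-C$ of non-isotropic points and the closed set $C$ of isotropic points, and to verify pointwise vanishing of the Weyl tensor $W$ on both parts. Since $W$ is a smooth tensor on $M$, this is enough.

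In dimension $n=3$ the Weyl tensor vanishes identically by dimensional reasons, so the statement is trivial; assume $n\geq 4$. On the open set $M-C$, the $1$-QC hypothesis provides around every point a globally $1$-QC neighborhood. As recalled in the introduction (see \cite{BF, H, GM}), every globally $1$-QC manifold of dimension at least $4$ is locally conformally flat, so $W\equiv 0$ on $M-C$.

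It remains to show $W_p=0$ at every $p\in C$. By the definition of an isotropic point, all $2$-planes at $p$ have the same sectional curvature $K(p)$. A standard algebraic fact then forces the curvature operator at $p$ to take the constant-curvature form
\[
R_p(X,Y,Z,T)=K(p)\bigl(g(X,Z)g(Y,T)-g(X,T)g(Y,Z)\bigr),
\]
from which one reads off $\mathrm{Ric}_p=(n-1)K(p)g_p$ and $\mathrm{scal}(p)=n(n-1)K(p)$. Inserting these into the algebraic decomposition of $R$ into Weyl, traceless Ricci and scalar parts, the Weyl component vanishes at $p$; equivalently, $R_p$ is the Riemann tensor at a point of a model space of constant curvature, whose Weyl tensor is zero. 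Hence $W_p=0$ for every $p\in C$, and combining this with the previous step yields $W\equiv 0$ on all of $M$.

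The argument has no serious obstacle: the only nontrivial inputs are the two well-known facts invoked above, namely the local conformal flatness of globally $1$-QC manifolds in dimension $\geq 4$ and the algebraic rigidity of the curvature tensor at a single point of isotropic sectional curvature. The conceptually useful observation is that the latter is a purely pointwise statement, so no density or limiting argument is needed to transfer information from $M-C$ into $C$, even when $C$ has nonempty interior.
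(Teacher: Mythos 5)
Your proof is correct and follows the same two-step strategy as the paper, which simply states that the vanishing "is known for points of $M-C$ (see [BF,GM]) and is classical for isotropic points"; you have merely fleshed out what "known" and "classical" mean, namely the conformal flatness of globally $1$-QC manifolds in dimension $\geq 4$ on $M-C$, and the pointwise algebraic rigidity of a curvature tensor with isotropic sectional curvature on $C$. The additional remark that the $n=3$ case is trivial for purely dimensional reasons is also correct and harmless.
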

\begin{proof}
This is known for points of $M-C$ (see \cite{BF,GM}) and is classical for isotropic points. 
\end{proof}

Let $\widehat{\xi}$ be a locally defined unit vector field 
which determines the line field $\xi$. 
Moreover, we denote by the same letter $\widehat{\xi}$ an arbitrary extension 
(not necessarily continuous) to all of $M$. 

\begin{lemma}\label{continuous}
If $M$ is a compact $1$-QC manifold 
then the functions $H$ and $N$ have a canonical smooth
extension to $M$. 
\end{lemma}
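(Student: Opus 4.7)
The plan is to leverage smoothness of the Schouten tensor and of the Weitzenböck curvatures $G_m$ (smooth on $M$ by Lemma \ref{weyl} and Lemma \ref{weitz}) to express $H$ and $N$ globally as smooth functions. On $M-C$ both $H$ and $N$ are already smooth by Lemma \ref{local-isotrop}, and at a point $p\in C$ continuity forces $H(p)=N(p)=K_0(p)$, where $K_0=R/(n(n-1))$ is the common sectional curvature at the isotropic point. The substance of the proof is smoothness of this extension across $C$.

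First, the trace $\sigma_1(S)$ of the Schouten operator is smooth globally and equals $(n-2)H/2 + N$ on $M-C$ (from the eigenvalue list $H/2$ of multiplicity $n-1$ and $N-H/2$ simple), giving one smooth linear relation between $H$ and $N$. A second relation is to be extracted from the Weitzenböck curvatures: substituting the Schouten spectrum into the Guan-Wang formula for $G_m$, the expression in terms of $H$ and $N$ depends on whether $N>H$ or $N<H$ at the point, since the simple eigenvalue sits either above or below the repeated one. The two candidate expressions are $m(n-m-1)H + mN$ and $(n-m)(m-1)H + (n-m)N$, and they differ by $(n-2m)(H-N)$, which vanishes on $C$.

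The key step is to find a linear combination $\sum a_m G_m$ whose expression is the same in both regimes. A direct calculation shows that, for $n\neq 4$, the combination $(n-2)G_2-(n-4)G_1$ equals $(n-2)^2 H + nN$ in either regime, while for the exceptional case $n=4$ the combination $G_1+G_3$ equals $2H+4N$ uniformly. Pairing such a regime-independent expression $F_1$ with $F_0 := 2\sigma_1(S) = (n-2)H + 2N$ yields a linear system
\[
\begin{cases} (n-2)H + 2N = F_0, \\ (n-2)^2 H + nN = F_1 \end{cases}
\]
with determinant $(n-2)(4-n)\neq 0$ (analogously for the $n=4$ variant), and solving it expresses both $H$ and $N$ as smooth global functions on $M$ agreeing with the original ones on $M-C$.

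The main obstacle is that eigenvalue functions of a smooth symmetric operator are typically only continuous across a locus where multiplicities jump: one only knows a priori that $(N-H)^2$ is smooth (as a constant multiple of the squared norm of the traceless Schouten), not $N-H$ itself, so the naive route of extracting a signed square root is blocked. The essential point of the argument is to bypass this square-root obstruction by exploiting the special algebraic form of the Schouten operator on a $1$-QC manifold with vanishing Weyl tensor: the regime-dependent pieces of $G_m$ conspire to cancel in the right linear combination, producing a genuinely smooth \emph{linear} functional of $H$ and $N$ from which both can be recovered by straightforward inversion.
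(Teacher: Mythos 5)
Your proof is correct and takes essentially the same route as the paper: both rest on the Guan--Wang formula and Lemma \ref{weitz} for smoothness of $G_m$, the Schouten eigenvalues $H/2$ (multiplicity $n-1$) and $N-H/2$, and the same data --- $G_1,G_2$ together with the scalar curvature for $n\neq 4$, and $G_1,G_3$ for $n=4$. The only difference is cosmetic: you build the regime-independent combination $(n-2)G_2-(n-4)G_1=(n-2)^2H+nN$ so the $\min$-term cancels outright, while the paper first isolates $\min(H,N)$ from $G_2-\frac{R}{n-1}=(n-4)\min(H,N)$ and then back-substitutes; the two computations are algebraically equivalent.
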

\begin{proof}
Let $R$ denote the scalar curvature and $Ric$ the Ricci curvature. 
We set then: 
\[H(p)=N(p)=\frac{1}{n(n-1)}R(p), \; {\rm if } \; p\in C.\]

The formulas for the Ricci tensor from \cite{BF,BP} imply that 
\[ Ric(X,Y)=((n-2)H+N)g(X,Y)+(n-2)(N-H)g(X,\widehat{\xi})g(Y,\widehat{\xi})\]
at all points $p\in M-C$. Note that 
$g(X,\widehat{\xi})g(Y,\widehat{\xi})$ is well-defined on $M-C$ 
and independent on the choice of the vector field $\widehat{\xi}$. 
Nevertheless this formula  makes sense 
and holds at points $p\in C$, as well.

The Ricci operator has eigenvalues 
$(n-2)H+N$ and $(n-1)N$ with multiplicities $n-1$ and $1$, respectively at points of $M-C$ and 
$(n-2)H+N=(n-1)N$ with multiplicity $n$ at points of $C$.  
  
The Schouten tensor $S$ is from above:  
\[ S(X,Y)=\frac{H}{2}g(X,Y)+(N-H)g(X,\widehat{\xi})g(Y,\widehat{\xi}).\]
The eigenvalues of the Schouten operator are therefore 
$\frac{H}{2}$ and $N-\frac{H}{2}$, with multiplicities  $n-1$ and $1$, respectively at points of $M-C$ and 
$\frac{H}{2}=N-\frac{H}{2}$ with multiplicity $n$ at points of $C$.  

According to Lemmas \ref{weitz} and \ref{weyl} the function $G_m$ is smooth on $M$. 
An explicit calculation shows that: 
\[ G_1= N+(n-2)\min(H,N),  {\rm for }\;  n\geq 3,\] 
\[G_2=2N+ (n-2)H+(n-4)\min(H,N)=\frac{R}{n-1}+(n-4)\min(H,N),  {\rm for }\;  n\geq 3,\]
\[G_3=3N+ 2(n-3)H+(n-6)\min(H,N),  {\rm for }\;  n\geq 4.\]
If $n\neq 4$, since $R, G_1$ and $G_2$ are smooth,  we derive that  
$\min(H,N)$ and then $N$ and $H$ are smooth functions on $M$. 
When $n=4$, $G_1+G_3=2H+4N$ and $\frac{R}{3}=2N+2H$  are smooth, so 
$H,N$ and $\min(H,N)$ are smooth. 
\end{proof}
\begin{remark}
The  above extensions of $H$ and $N$ are the unique continuous extensions to $\overline{M-C}$. 
\end{remark}

\subsection{Conformal flatness}\label{section-conformal}
Let $M$ be a compact 1-QC manifold. We define the {\em horizontal distribution} $\mathcal D$ to be the 
orthogonal complement to the distinguished line field $\xi$.  By Lemmas \ref{global} and \ref{local-line}, the distribution 
$\mathcal D$  is well-defined on $M-C$. 
One important  ingredient  used in this paper is the 
following easy observation (see also \cite{BF,GM}):
\begin{lemma}\label{conformal}
\begin{enumerate}
\item 
$1$-QC  manifolds  of dimension $n\geq 4$ are conformally flat. 
\item $1$-QC manifolds of dimension $n=3$ are conformally flat if and only if 
the horizontal distribution $\mathcal D$ is completely integrable on  $M-C$.
\end{enumerate}  
\end{lemma}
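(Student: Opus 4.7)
The plan is to treat the two cases separately, relying on the standard conformal-flatness criteria: vanishing of the Weyl tensor in dimension $n\geq 4$, and vanishing of the Cotton tensor in dimension $n=3$.

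Part (1) is immediate from Lemma \ref{weyl}: since the Weyl tensor of any $1$-QC manifold vanishes identically, the Weyl--Schouten theorem yields conformal flatness in dimension $n\geq 4$. Dimension three has to be treated separately because there the Weyl tensor is automatically zero and conformal flatness is controlled instead by the Cotton tensor
\[
C(X,Y,Z)\;=\;(\nabla_Z S)(X,Y)-(\nabla_Y S)(X,Z).
\]
On $M-C$ the Schouten tensor is given by the explicit formula established in the proof of Lemma \ref{continuous},
\[
S \;=\; \tfrac{H}{2}\,g + (N-H)\,\theta\otimes\theta, \qquad \theta:=\hat\xi^{\flat}.
\]
My approach for part (2) is to expand $\nabla S$, antisymmetrize, and test the resulting Cotton tensor on orthonormal frames adapted to the splitting $TM = \xi\oplus\mathcal D$ over $M-C$.

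Evaluation on a purely horizontal triple $(X,Y,Z)\in \mathcal D^{\,3}$ kills every $\theta$-dependent contribution and leaves only a gradient term, so $C=0$ on such triples is equivalent to $dH|_{\mathcal D}=0$. Evaluation on triples of mixed type $(X,Y,\hat\xi)$ with $X,Y\in\mathcal D$ produces a term of the form $(N-H)(\nabla_Y\theta)(X)$, whose symmetric part in $X,Y$ is fixed by the vertical derivative of $H$ and whose antisymmetric part equals $-(N-H)\,d\theta(X,Y)$. Since $H\neq N$ on $M-C$, requiring $C\equiv 0$ therefore forces $d\theta|_{\mathcal D}=0$, which by Frobenius' theorem is exactly the integrability of the $2$-dimensional distribution $\mathcal D$. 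Conversely, once $\mathcal D$ is integrable a $1$-QC metric in dimension three is locally a warped product $dt^2+f(t)^2 h$ between an interval in the direction of $\hat\xi$ and a $2$-dimensional leaf of $\mathcal D$, and warped products of this form are automatically conformally flat. On the interior of $C$ the metric is of constant curvature by F.~Schur's theorem (cf.\ the earlier Remark), hence trivially conformally flat, and continuity of $H$ and $N$ across $\partial C$ guaranteed by Lemma \ref{continuous} allows the equivalence to extend to all of $M$.

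The main obstacle is the converse direction: showing that $\mathcal D$-integrability in the $1$-QC setting forces the local warped-product structure. This requires using the $1$-QC curvature equations, together with the Codazzi and Gauss equations, to prove that the horizontal leaves are totally umbilical and that their induced metrics are pointwise conformal along the $\hat\xi$-flow. Once this is in place, the remaining Cotton-tensor components vanish automatically and the argument closes. The calculation is formally routine but demands careful bookkeeping to isolate the symmetric and antisymmetric parts of $\nabla\theta|_{\mathcal D}$ and to track the interaction of $dH$, $dN$ with the vertical direction.
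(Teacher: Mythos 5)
Part (1) is exactly the paper's argument: vanishing Weyl (Lemma \ref{weyl}) gives conformal flatness for $n\geq 4$. For the forward implication in Part (2), your decomposition of the Cotton tensor on a frame adapted to $\xi\oplus\mathcal D$ reproduces the paper's computation, which reduces the vanishing of the Cotton--Schouten tensor to the three conditions
\[
dH=\widehat{\xi}(H)\,\eta,\qquad
\nabla_X\widehat{\xi}=\tfrac{\widehat{\xi}(H)}{2(N-H)}X\ \ (X\in\mathcal D),\qquad
d\eta|_{\mathcal D}=0,
\]
and your extraction of $d\theta|_{\mathcal D}=0$ from the antisymmetric part of the mixed-type component, using $H\neq N$ on $M-C$, is the correct way to land on integrability.

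The converse direction, however, has a genuine gap. You propose to show that integrability of $\mathcal D$ forces a local warped-product structure $dt^2+f(t)^2 h$ in the $\widehat\xi$-direction and then cite conformal flatness of such warped products. But integrability of $\mathcal D$ together with the $1$-QC identities does \emph{not} imply warped-product structure: that conclusion additionally requires the distinguished field $\widehat\xi$ to be geodesic. This is exactly the extra hypothesis under which Ganchev--Mihova classify globally $1$-QC manifolds as warped products (as the introduction to this paper recalls), and it is independent of the $1$-QC axioms and of the integrability of $\mathcal D$. For a general $1$-QC metric with integrable $\mathcal D$ the field $\widehat\xi$ need not be geodesic, so your route does not close. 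The paper avoids this entirely: of the three conditions above, the first two are shown to hold \emph{automatically} for any $1$-QC metric (by the curvature identities of Boju--Popescu and Ganchev--Mihova), so that once integrability supplies the third, the Cotton--Schouten tensor vanishes on $M-C$ directly, with no need for any product-structure theorem. You should replace the warped-product detour with this verification. Your treatment of ${\rm int}(C)$ via Schur's theorem and of $\overline{M-C}$ by continuity is the same as the paper's and is fine.
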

\begin{proof}
This is already known when the dimension $n\geq 4$  
because the Weyl tensor vanishes (see \cite{BF}, p.379 and \cite{GM}). 
 
By the Weyl-Schouten theorem (see \cite{La}, section C, Thm. 9) 
the 3-manifold $M$ is conformally flat iff 
the so-called Cotton-Schouten tensor vanishes, namely if:
\[ (\nabla_XS)(Y,Z)-(\nabla_YS)(X,Z)=0.\]
Given $p\in M-C$, let $U$ be a disk neighborhood of $p$ within $M-C$. 
Define the 1-form $\eta$ on $U$ by $\eta(X)=g(\widehat{\xi|_U},X)$,
where $\widehat{\xi|_U}$ is a vector field generating $\xi|_U$. 
Then the horizontal 
distribution  $\mathcal D$ is the kernel of $\eta$ on $U$. 
The vanishing of the Cotton-Schouten tensor on $U$ is equivalent to: 
\[ dH(X)g(Y,Z)-dH(Y)g(X,Z) + 2(N-H) d\eta(X,Y)\eta(Z) + \]
\[+2(N-H)\left(\eta(Y)(\nabla_X\eta)Z-\eta(X)(\nabla_Y\eta)Z\right)+\]
\[ +2\left(d(N-H)(X)\eta(Y)-d(N-H)(Y)\eta(X)\right)\eta(Z)=0\]
Using all possibilities for  choosing $X,Y,Z$  among 
the elements of a basis of the 
horizontal distribution $\mathcal D$ or $\widehat{\xi|_U}$ 
it follows that this is equivalent to the system of the following three 
equations:
\[dH=\widehat{\xi|_U}(H) \eta;\]
\[ \nabla_{X}\widehat{\xi|_U}= \frac{\widehat{\xi|_U}(H)}{2(N-H)} X, \; {\rm if } \; X\in \mathcal D;\]
\[ d\eta(X,Y)=0, \; {\rm if } \;  X,Y\in \mathcal D.\]
The first two equations are always verified by using the method of 
\cite{BP,GM} in all dimensions $n\geq 3$.  The third one is equivalent 
to the fact that the horizontal distribution $\mathcal D$ 
is completely integrable on $U$.  
Thus  $\mathcal D$ is integrable  if $M$ is conformally flat. 

Conversely, if $\mathcal D$ is integrable, the Cotton-Schouten tensor vanishes 
on $M-C$, and hence on $\overline{M-C}$. On the other hand 
$M-{\overline{M-C}}$ coincides with the set ${\rm int}(C)$ of interior points of $C$, namely 
those points of $C$ having an open neighborhood in $M$ which is contained in $C$. 
By Schur's theorem  $H=N$ are constant in any connected component of ${\rm int}(C)$ and 
in particular the Cotton-Schouten tensor vanishes. This proves that $M$ is conformally flat.  
\end{proof}

\begin{remark}
One cannot dispose of the integrability condition  for $\mathcal D$ when $n=3$.
In fact, any closed 3-dimensional nilmanifold 
is covered by the Heisenberg group, endowed with a 
left invariant Riemannian metric. Then  
the tangent space at each point is identified to the 
nilpotent Lie algebra generated by $X,Y,\xi$ with relations 
$[X,Y]=\xi$, $[X,\xi]=[Y,\xi]=0$. The 
horizontal distribution corresponds to the subspace 
generated by $X,Y$ and is well-known to be non-integrable.
However, the left invariant Riemannian metric on the Heisenberg 
group is globally $1$-QC, as we can compute 
$H=-\frac{1}{4}$ and $N=\frac{3}{4}$. This furnishes the 
typical example of a closed $1$-QC 3-manifold which is not conformally flat. 
In fact closed nilmanifolds are not conformally flat 
(see \cite{Go}) unless they are finitely 
covered by a sphere, a torus or $S^1\times S^2$.  
\end{remark}

\subsection{The leaf curvature function $\lambda$}\label{leafcurv}
Let $M$ be a 1-QC manifold, which is assumed to be conformally flat, when $n=3$. 
Then  the distribution $\mathcal D$ is integrable and hence defines a foliation of $M-C$. The leaves 
of the foliation will be called {\em curvature leaves}. 
According to \cite{BP,GM} every curvature leaf is a totally umbilical submanifold 
of $M$ of intrinsic constant curvature equal to: 
\[\lambda=H + \frac{\widehat{\xi}(H)^2}{4(N-H)^2}.\]
Observe that the right hand side above is independent on the choice of a local distinguished unit vector field 
$\widehat{\xi}$ which lifts $\xi$.

Before to proceed, recall that a real function $f$ defined on a compact subset $K$ of a manifold $M$ is 
called {\em smooth} if for every point $p\in K$ there exists an open neighborhood $U\subset M$ containing $p$ 
and a real smooth function $f_U$ defined on $U$ such that $f|_{K\cap U}=f_U|_K$.

\begin{lemma}\label{extension-lambda}
Let $M$ be a  locally $1$-QC manifold and $\xi$ denote a  distinguished line field extension to $\Omega\supset \overline{M-C}$.  
Then  $\lambda$ admits a well-defined smooth extension to $\overline{M-C}$.  
\end{lemma}
\begin{proof}
For every point $p$ of $M$ we choose an open neighborhood $V\subset M$ which is contractible so that 
the distinguished line field $\xi_V$  is defined and can be lifted to a vector field $\widehat{\xi}_V$. 
Let  $X_V$ be a unit vector field  on $V$ orthogonal to $\widehat{\xi}_V$.  
According to (\cite{BP}, (3.11)) and (\cite{GM} (3.5)) we have:
\[\left.\frac{\widehat{\xi}_V(H)}{2(N-H)}\right|_p= g(\nabla_{X_V}\widehat{\xi}_V, X_V)|_p, \; {\rm for } \; p\in V\cap (M-C).\]
The function $(g(\nabla_{X_V}\widehat{\xi}_V, X_V))^2$ is smooth and well-defined on $V$, namely independent on the choice of the lift $\widehat{\xi}_V$. We set therefore 
\[\lambda_V(p)=\left\{\begin{array}{ll}
\lambda(p), & {\rm if }\; p\in V\cap (M-C);\\ 
H+(g(\nabla_{X_V}\widehat{\xi}_V, X_V))^2, & {\rm if }\; p\in  V\cap C,  
 \end{array}\right.\]
where $X_V$ is an arbitrary smooth unit vector field on $V$ orthogonal to $\widehat{\xi}_V$. 
Then $\lambda_V$ is a smooth function on $V$.

Consider a finite open covering $\{U_i\}$ of $M$ by open sets as above. Let $\Omega\supset \overline{M-C}$ be an  
open set on which the distinguished vector field $\xi$ is well-defined, as provided by Lemma \ref{local-line}. 
If $U_i,U_j$ are two such open sets and $p\in U_i\cap U_j\cap \overline{M-C}$, then Lemma \ref{local-line} implies that 
$\widehat{\xi}_{U_i}|_p=\pm\widehat{\xi}_{U_j}|_p$, so that 
\[ (g(\nabla_{X_{U_i}}\widehat{\xi}_{U_i}, X_{U_i}))^2|_p= (g(\nabla_{X_{U_j}}\widehat{\xi}_{U_j}, X_{U_j}))^2|_p, \; {\rm if } \; p\in U_i\cap U_j \cap \overline{M-C}.\]
Therefore 
the collection $\lambda_{U_i}$ defines a smooth extension $\lambda:\overline{M-C}\to \R$. 
\end{proof}
\begin{remark}
Lemma \ref{extension-lambda} shows that $\lambda(p)$ remains bounded 
as $p$ approaches $C$. 
\end{remark}

\begin{remark}
Recall also (see \cite{BP,GM}) that 
\[ |\widehat{\xi}(H)|=\parallel {\rm grad }\; H\parallel \]
and the function $\lambda$  is constant on the 
level hypersurfaces of $H$. 
\end{remark}

\subsection{Isometric immersions of $1$-QC manifolds}\label{section-isometric}
One important tool which  will be used throughout this paper 
is the  codimension one immersability of simply connected $1$-QC manifolds. 
Let $\mathbb H_{\kappa}^m$ denote the $m$-dimensional 
hyperbolic space of constant sectional curvature 
$-\kappa$, when $\kappa>0$ and the Euclidean space when $\kappa=0$, 
respectively. For a $1$-QC manifold  $M^n$ we denote 

\[ \kappa= \left\{\begin{array}{ll}
1-\inf_{p\in M} H(p), & {\rm when } \;  \inf_{p\in M}H(p) \leq 0;  \\
0, &{\rm when } \;  \inf_{p\in M}H(p) > 0.
\end{array}\right.\]
The main result of this section is: 

\begin{proposition}\label{immerse}
Assume that $M^n$, $n\geq 3$, admits a $1$-QC metric which is conformally flat when $n=3$.  
Then there exists an equivariant 
isometric immersion, namely an isometric immersion 
$f:\widetilde{M}\to \mathbb H_{\kappa}^{n+1}$, and 
a group homomorphism $\rho:\pi_1(M)\to Iso( \mathbb H_{\kappa}^{n+1})$ such that  
\[ f(\gamma\cdot x)=\rho(\gamma)\cdot f(x), x\in \widetilde{M}, \gamma \in \pi_1(M)\]
where the action on the right side is the action by isometries of 
$Iso( \mathbb H_{\kappa}^{n+1})$ on $\mathbb H_{\kappa}^{n+1}$ while the left side action is by 
deck transformations on the universal covering.
\end{proposition}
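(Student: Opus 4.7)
The plan is to apply the fundamental theorem of isometric immersions (Bonnet--Cartan) on the simply connected manifold $\widetilde{M}$, using a shape operator built explicitly from the $1$-QC data, and to deduce equivariance from the uniqueness clause of that theorem. The choice of $\kappa$ guarantees $H+\kappa>0$ everywhere on $M$ (in fact $\geq 1$ when $\inf H\leq 0$), which makes the principal curvatures we are about to define real.

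First I would construct a smooth symmetric $(1,1)$-tensor $A$ on $M$ to serve as the candidate shape operator. On $M-C$, where the distinguished line field $\xi$ and the functions $H,N$ are available by Lemmas \ref{local-line} and \ref{continuous}, set
$$A=\sqrt{H+\kappa}\,\mathrm{Id}+\frac{N-H}{\sqrt{H+\kappa}}\,\pi_\xi,$$
where $\pi_\xi$ is the orthogonal projection onto $\xi$; at points of $C$ set $A=\sqrt{H+\kappa}\,\mathrm{Id}$ (consistent with the above formula since $H=N$ there). This tensor is smooth on all of $M$ because $(N-H)\pi_\xi$ is, up to a scalar, the anisotropic part $S-\tfrac{H}{2}g$ of the Schouten tensor, which is smooth by Lemma \ref{continuous}. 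By construction $A$ has principal directions $\mathcal D$ (eigenvalue $\sqrt{H+\kappa}$, multiplicity $n-1$) and $\xi$ (eigenvalue $(N+\kappa)/\sqrt{H+\kappa}$).

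The heart of the argument is to verify that $(g,A)$ satisfies the Gauss--Codazzi equations for a hypersurface of $\mathbb H^{n+1}_\kappa$. The Gauss equation
$$R(X,Y)Z=-\kappa\bigl(g(Y,Z)X-g(X,Z)Y\bigr)+\langle AY,Z\rangle AX-\langle AX,Z\rangle AY$$
matches the explicit Riemann tensor of a $1$-QC manifold (implicit in Lemma \ref{continuous} and \cite{BF,BP,GM}): on horizontal $2$-planes it yields $(\sqrt{H+\kappa})^2-\kappa=H$, and on mixed $2$-planes $\sqrt{H+\kappa}\cdot\tfrac{N+\kappa}{\sqrt{H+\kappa}}-\kappa=N$, both tautologies, while in codimension one there is no Ricci equation to check. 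The Codazzi equation $(\nabla_X A)Y=(\nabla_Y A)X$ is the delicate part: using the covariant-derivative formulas for $1$-QC metrics from \cite{BP,GM}, it reduces on $M-C$ to exactly the three identities
$$dH=\widehat{\xi}(H)\,\eta,\qquad \nabla_X\widehat{\xi}=\tfrac{\widehat{\xi}(H)}{2(N-H)}\,X\ \text{for }X\in\mathcal D,\qquad d\eta|_{\mathcal D}=0,$$
encountered in the proof of Lemma \ref{conformal}: the first two hold for any $1$-QC metric in all dimensions, and the third is the integrability of $\mathcal D$, supplied by conformal flatness (automatic for $n\geq 4$, imposed as hypothesis for $n=3$). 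On $\mathrm{int}(C)$, where $H$ is locally constant by Schur's theorem and $A$ is umbilical, Codazzi is immediate; the closed condition then extends across $\partial C$ by continuity.

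With $(g,A)$ satisfying Gauss--Codazzi on the simply connected Riemannian manifold $\widetilde M$, the fundamental theorem of hypersurfaces in space forms yields an isometric immersion $f:\widetilde M\to\mathbb H^{n+1}_\kappa$ with shape operator $A$, unique up to post-composition by an element of $\mathrm{Isom}(\mathbb H^{n+1}_\kappa)=SO(n+1,1)$. For each $\gamma\in\pi_1(M)$, acting on $\widetilde M$ by an isometry that preserves the lifted $A$, the map $f\circ\gamma$ is another such immersion, so there is a unique $\rho(\gamma)\in SO(n+1,1)$ with $f\circ\gamma=\rho(\gamma)\circ f$; uniqueness forces $\gamma\mapsto\rho(\gamma)$ to be a group homomorphism. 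The hardest step is the Codazzi verification, both because of the algebraic bookkeeping and because it is precisely where the conformal-flatness hypothesis in dimension $3$ enters through the integrability of $\mathcal D$; a secondary delicacy is ensuring the smoothness and coherence of $A$ across the isotropic set $C$, which is handled via the smoothness of the Schouten tensor.
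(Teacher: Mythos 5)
Your proof follows essentially the same route as the paper: define the candidate second fundamental form (shape operator) by the same formula, verify Gauss directly and reduce Codazzi to the three identities of Lemma \ref{conformal} (with the third being where conformal flatness enters for $n=3$), apply the fundamental theorem on $\widetilde{M}$, and deduce equivariance and the homomorphism property from uniqueness. Your justification of the smoothness of $A$ at $C$ via the anisotropic part of the Schouten tensor is a nice explicit touch that the paper glosses over; the one point you leave implicit is that the uniqueness of $\rho(\gamma)$ requires the image $f(\widetilde{M})$ not to lie in a totally geodesic $\mathbb H^n_\kappa$ --- this is automatic here because $\sqrt{H+\kappa}>0$ forces $A\neq 0$, but the paper flags it as an explicit (vacuous) alternative case.
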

\begin{proof}
The universal covering  $\widetilde{M}$ is a 
$1$-QC manifold with the induced metric by the covering $\pi:\widetilde{M}\to M$. 

Every $p\in M-C$ has an open contractible neighborhood such that the line field $\xi$ 
can be lifted to a vector field $\widehat{\xi}_U$.  We derive a smooth tensor  
$h'_U(X,Y)=g(\widehat{\xi}_U,X)g(\widehat{\xi}_U,Y)$ on $U$. If $p\in U\cap V$ then 
$\widehat{\xi}_U|_p=\pm \widehat{\xi}_V|_p$ and thus 
$h'_U(X,Y)|_p=h'_V(X,Y)|_p$, for any $p\in U\cap V$. Thus the collection $h'_U$, where $\{U\}$ 
is a covering  of $M-C$ defines a smooth tensor field $h'$ on $M-C$.

Define now the  following smooth  symmetric (1,1) tensor field on $M$ pointwise, by means of 
the formula:  
\[h(X,Y)|_p=\left\{\begin{array}{ll}
 \sqrt{H+\kappa}\cdot g(X,Y)|_p+ \frac{N-H}{\sqrt{H+\kappa}}\cdot h'(X,Y)|_p, &{\rm if } \;p\in M-C; \\
 \sqrt{H+\kappa} \cdot g(X,Y)|_p, &{\rm if } \;p\in C, 
\\
 \end{array}\right.
\]
and also denote by $h$ the pull-back to $\widetilde{M}$.

One verifies immediately the Gauss and Codazzi-Mainardi equations associated to the tensor $h$, namely: 
\[ R(X,Y,Z,W)=-\kappa\left(g(Y,Z)g(X,W)-g(X,Z)g(Y,W)\right)+h(Y,Z)h(X,W)- h(X,Z)h(Y,W)\]
\[ \nabla_Xh(Y,Z)=\nabla_Yh(X,Z)\]
This follows from the second Bianchi identity of the curvature tensor, when 
$n\geq 4$ and from the vanishing of the Cotton-Schouten tensor 
in dimension 3. The second equality is treated as in
the proof of lemma \ref{conformal} by noticing that only the last
condition  discussed there has to be verified, 
the other ones being automatic. 

Therefore, by the fundamental theorem for submanifolds of constant curvature 
space, (see \cite{S}, chap.7, section C, Thm. 21)   $\widetilde{M}$
 admits an isometric immersion $f:\widetilde{M}\to \mathbb H_{\kappa}^{n+1}$, whose 
second fundamental form is $h$. 

The second fundamental form $h$ on $\widetilde{M}$ is invariant 
by the action of the deck group, by construction. This means that there exist 
open neighborhoods of $x$ and $\gamma\cdot x$ which are isometric. 
Let $\rho(\gamma)_x$ be the global isometry sending one neighborhood onto the 
other. Now $\rho(\gamma)_x$ is locally constant, as a function of $x$, 
and hence it is independent on $x$. Furthermore $\rho(\gamma)$ is 
uniquely defined by the requirement to send $n+1$ points $x_i$ 
in general position in $\mathbb H_{\kappa}^{n+1}$, which belong to 
a small open neighborhood of $x$  into their respective 
images  $\gamma\cdot x_i$. Therefore, if the image of the immersion is 
not  contained in a hyperbolic hyperplane then $\rho(\gamma)$ is unique 
and this also implies that $\rho$ is a group homomorphism. 
In the remaining case $M$ is hyperbolic and the result follows again.   
\end{proof}

\begin{remark}
The immersability result holds also when $M^n$ is not 
necessarily compact, provided $\inf_{p\in M} H(p)$ exists. 
\end{remark}

Contrasting with the 2-dimensional case, but in accordance with Schur's 
theorem the space of  such isometric immersion is discrete. The first 
and simplest case is:
\begin{proposition}\label{unique-immerse}
Suppose that $n\geq 4$. If $\kappa$ is such that $H+\kappa >0$ on $M$, then 
there exists a unique isometric immersion 
$f:\widetilde{M}\to \mathbb H_{\kappa}^{n+1}$, up  
to an isometry (possibly reversing the orientation) of $\mathbb H_{\kappa}^{n+1}$.
\end{proposition}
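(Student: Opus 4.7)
The plan is to reduce uniqueness to the algebraic rigidity of the second fundamental form. By Proposition \ref{immerse} we already have one isometric immersion $f$ whose second fundamental form is the symmetric tensor $h$ constructed there. Given any other isometric immersion $f':\widetilde{M}\to\mathbb{H}^{n+1}_\kappa$, denote by $\tilde h$ its second fundamental form. Both $h$ and $\tilde h$, viewed as symmetric $(1,1)$-tensors (shape operators), satisfy the same Gauss equation driven by the intrinsic curvature tensor of $\widetilde{M}$ and the ambient constant curvature $-\kappa$:
\[ R(X,Y,Z,W)+\kappa\bigl(g(Y,Z)g(X,W)-g(X,Z)g(Y,W)\bigr)=\tilde h(Y,Z)\tilde h(X,W)-\tilde h(X,Z)\tilde h(Y,W). \]

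First I would exploit the positivity hypothesis $H+\kappa>0$ to show that $h$ has type number $n$ or $n-1$ everywhere. Indeed, as computed from its definition, the eigenvalues of the shape operator associated to $h$ are $\sqrt{H+\kappa}$ with multiplicity $n-1$ on the horizontal distribution $\mathcal D$, and $\frac{N+\kappa}{\sqrt{H+\kappa}}$ with multiplicity one along $\widehat{\xi}$ (with the convention that at isotropic points $H=N$ both eigenvalues agree). In any case the rank of $h$ is at least $n-1\geq 3$ since $n\geq 4$.

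Next I would invoke the classical algebraic rigidity statement (Beez--Killing, and its extension to constant curvature ambient spaces due to Allendoerfer, see for instance \cite{S}, chap.~7): if two symmetric operators $A_1,A_2$ on a Euclidean space of dimension $\geq 4$ satisfy the same Kulkarni--Nomizu product identity $A_1\wedge A_1=A_2\wedge A_2$, and one of them has type number at least $3$, then $A_2=\pm A_1$. Applying this pointwise on $\widetilde{M}$ to $h$ and $\tilde h$, we conclude that at every $x\in \widetilde{M}$ one has $\tilde h(x)=\varepsilon(x)h(x)$ with $\varepsilon(x)\in\{\pm 1\}$. Since $\widetilde{M}$ is connected and $h$ is nowhere zero (as $H+\kappa>0$), the sign $\varepsilon$ is constant, so $\tilde h\equiv \pm h$.

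Finally, I would close the argument by invoking the uniqueness part of the fundamental theorem for submanifolds of constant curvature spaces (\cite{S}, chap.~7, section~C, Thm.~21): two isometric immersions of a simply connected Riemannian manifold into $\mathbb H_{\kappa}^{n+1}$ whose second fundamental forms coincide differ by an isometry of $\mathbb H_{\kappa}^{n+1}$, and if they differ by a global sign they differ by an orientation-reversing isometry. Thus $f'=g\circ f$ for some $g\in\mathrm{Isom}(\mathbb H_{\kappa}^{n+1})$, which is the desired conclusion. The main delicate point here is the invocation of algebraic rigidity in the constant curvature setting; the hypothesis $n\geq 4$ is used precisely to guarantee type number at least $3$, and $H+\kappa>0$ is what ensures the rank bound globally, including at isotropic points where $h$ collapses to a conformal multiple of the metric but is still non-degenerate.
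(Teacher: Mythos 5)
Your proof is correct and follows the same approach as the paper, which cites only ``linear algebra computations'' for the key step: you have simply made that step explicit by invoking the Beez--Killing/Allendoerfer algebraic rigidity lemma (type number $\geq 3$, which $n\geq 4$ together with $H+\kappa>0$ guarantees via the eigenvalue $\sqrt{H+\kappa}$ of multiplicity $n-1$). The paper's one-line argument and yours are the same in substance, so no comparison is needed beyond noting that you supply the named lemma the paper leaves implicit.
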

\begin{proof}
If $h$ is the second fundamental form of such an immersion, then 
linear algebra computations show that the Gauss equation already 
determine $h$ up to a sign. A hyperbolic isometry reversing orientation will 
change the sign of $h$. Now an isometric 
immersion is uniquely determined by its second fundamental form. 
\end{proof}
\begin{remark}
The same argument shows that there does not exist any isometric immersion in 
$\mathbb H_{\kappa'}^{n+1}$ around a point where  $H+\kappa' \leq 0$.
\end{remark}

Conversely we have: 
\begin{proposition}\label{characterize}
If  the conformally flat manifold $M^n$, $n\geq 4$, admits  an equivariant  isometric immersion 
$f:\widetilde{M}\to \mathbb H_{\kappa}^{n+1}$, and 
a group homomorphism $\rho:\pi_1(M)\to Iso( \mathbb H_{\kappa}^{n+1})$ such that 
\[ f(\gamma\cdot x)=\rho(\gamma)\cdot f(x), x\in \widetilde{M}, \gamma \in \pi_1(M)\]
then $M$ is $1$-QC. 
\end{proposition}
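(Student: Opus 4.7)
My plan is to read the $1$-QC structure off the shape operator of the given immersion. Equivariance of $f$ with respect to $\rho$ implies that the second fundamental form $h$ of $f$ on $\widetilde M$ is $\pi_1(M)$-invariant and descends to a smooth symmetric $(0,2)$-tensor on $M$, equivalently a $g$-self-adjoint shape operator $A\colon TM \to TM$. The Gauss equation then expresses the Riemann tensor of $M$ entirely in terms of $\kappa$ and $A$:
\[
R(X,Y,Z,W) = -\kappa\bigl(g(Y,Z)g(X,W) - g(X,Z)g(Y,W)\bigr) + h(Y,Z)h(X,W) - h(X,Z)h(Y,W).
\]

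Next I would exploit conformal flatness. Since $n \geq 4$, the Weyl tensor of $M$ vanishes identically, which, combined with the Gauss identity above, puts a strong algebraic constraint on $A$. I would invoke the classical theorem of E.~Cartan on conformally flat hypersurfaces in a space form (or re-derive it on the spot by substituting the Gauss formula into Weyl $= 0$ and doing linear algebra on the resulting symmetric tensor equation): at every point the shape operator $A$ has at most two distinct eigenvalues, one of multiplicity at least $n-1$. Denoting these by $a$ (multiplicity $n-1$) and $b$ (multiplicity $1$), and choosing locally a unit lift $\widehat\xi$ of the $b$-eigenline with dual $1$-form $\eta$, one obtains
\[
h(X,Y) = a\, g(X,Y) + (b - a)\, \eta(X)\eta(Y).
\]

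Let $C \subset M$ be the locus where $a = b$, i.e.\ the umbilical points of the immersion. On $M - C$ the two eigenvalues are simple and the $b$-eigenline varies smoothly, defining the candidate distinguished line field $\xi$. Pick $p \in M - C$ and a disk neighborhood $U$ on which $a \neq b$ and $\xi$ is trivialized by $\widehat\xi$. For orthonormal $u,v$ spanning a $2$-plane $\sigma$ chosen with $v \perp \widehat\xi$ and $\theta = \angle(\widehat\xi,\sigma)$, direct computation yields $h(u,u) = a\sin^2\theta + b\cos^2\theta$, $h(v,v) = a$, $h(u,v) = 0$, and hence
\[
K(\sigma) = -\kappa + h(u,u)h(v,v) - h(u,v)^2 = (-\kappa + a^2)\sin^2\theta + (-\kappa + ab)\cos^2\theta.
\]
The right-hand side depends only on $\theta$, so $U$ is globally $1$-QC with smooth horizontal and vertical curvatures $H = -\kappa + a^2$ and $N = -\kappa + ab$. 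Since every non-isotropic point admits such a neighborhood, $M$ is $1$-QC in the sense of the definition.

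The main obstacle is the rigidity step: showing that the vanishing of the Weyl tensor, substituted into the Gauss-type curvature tensor above, algebraically forces $A$ to have an eigenvalue of multiplicity at least $n-1$. This is genuinely a dimension $n \geq 4$ statement, and everything after it is mechanical bookkeeping with the explicit form of $h$. A minor secondary point is to check that the line field $\xi$, defined a priori from the $b$-eigenspaces of the pulled-back shape operator on $\widetilde M$, descends to $M$: this is immediate from equivariance, since $A$ itself is $\pi_1(M)$-invariant and so is its eigenspace decomposition.
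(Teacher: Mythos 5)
Your proof is correct and takes essentially the same approach as the paper: both invoke Cartan's classification of conformally flat hypersurfaces in space forms ($n\geq 4$) to force the shape operator to have an eigenvalue of multiplicity at least $n-1$, then read off the $1$-QC structure from the Gauss equation, with equivariance ensuring everything descends to $M$. You merely make explicit the computation $K(\sigma)=(-\kappa+a^2)\sin^2\theta+(-\kappa+ab)\cos^2\theta$ that the paper summarizes as "From Gauss' equations we obtain that $\mathcal N$ is globally $1$-QC."
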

\begin{proof}
According to Cartan 
(see \cite{Ca} and \cite{La}, Thm. 4) at each point $p\in \widetilde{M}$ there exists 
a principal curvature $\mu$  (i.e. an eigenvalue of the second fundamental form) of  the immersed 
manifold of multiplicity at least $n-1$. 
Let $\mathcal N$ be the  open set of non-umbilical points, 
namely those for which $\mu$ has multiplicity exactly $n-1$.  The distribution $\mathcal D$ is the eigenspace for the 
shape operator corresponding to the multiple eigenvalue. 
From Gauss' equations we obtain that $\mathcal N$ is globally $1$-QC, with distinguished line field given by the 
normal line field to $\mathcal D$. The complement of $\mathcal N$ is totally umbilical in  
$\mathbb H_{\kappa}^{n+1}$ and hence it has constant curvature. Since the immersion is equivariant 
these properties descend to $M$. 
\end{proof}
\begin{remark}
Note that the statement of Proposition \ref{characterize} is not true when $n=3$ 
as Cartan's theorem does not extend  to $n=3$ (see \cite{La}). 
\end{remark}

\section{The topology of locally  $1$-QC manifolds}
\subsection{Globally $1$-QC manifolds with orientable line field}
The purpose of this section is to prove that 
the globally $1$-QC manifolds  have a rather simple topological structure.
Throughout this section $M$ is a conformally flat $1$-QC manifold of dimension $n\geq 3$.  
Let $E$ be a connected component of $M-C$.  
By Lemma \ref{global} the open set $M-C$ is globally $1$-QC.  
In order to have a uniform approach we denote by $\widehat{E}$ the 
2-fold cover determined by the first Stiefel-Whitney class of $\xi|_{E}$, 
when $\xi|_E$ is non-orientable and $E$ itself otherwise.

Moreover, $M-C$ is endowed with a smooth foliation $\mathcal D$, which is orthogonal to the line field $\xi$. 
Following \cite{Re} the leaves of the foliation $\mathcal D$ will be called {\em curvature leaves}, 
as the principal curvatures are  constant along them.

\begin{proposition}\label{dicho}
If $\widehat{E}$ is closed then it is diffeomorphic to a 
fibration over $S^1$. If $\widehat{E}$ is non-compact then 
$\widehat{E}$ is diffeomorphic to $\widehat{P}\times \mathbb R$, where 
$\widehat{P}$ is a closed space form. 
\end{proposition}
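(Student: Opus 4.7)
On $\widehat{E}$ the distinguished line field is orientable, so $\widehat{\xi}$ is a globally defined unit vector field, and $\mathcal{D}=\widehat{\xi}^{\perp}$ is an integrable distribution defining a transversely orientable codimension-one foliation $\mathcal{F}$ whose leaves are totally umbilical hypersurfaces of constant intrinsic curvature $\lambda$ with principal curvatures $\pm\alpha$; in particular $H$, $N$, $\alpha$ and $\lambda$ are all constant along each leaf. The plan is to show $\mathcal F$ is a fiber bundle, over $S^1$ when $\widehat E$ is compact and over $\mathbb R$ when $\widehat E$ is non-compact.

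\textbf{Product structure on the universal cover.} Apply Proposition \ref{immerse} to obtain an equivariant isometric immersion $f\colon\widetilde{\widehat{E}}\to\mathbb{H}_{\kappa}^{n+1}$. By the argument of Proposition \ref{characterize} (invoking Cartan) the second fundamental form of $f$ has at each point a principal curvature of multiplicity $n-1$ whose eigenspace is the lift $\widetilde{\mathcal{D}}$. A further classical theorem of Cartan asserts that the integral leaves of such an eigenspace distribution lie on totally umbilical hypersurfaces of $\mathbb{H}_{\kappa}^{n+1}$ which form a one-parameter concentric family, sharing a common center, axis, or ideal point. Parametrizing this family by a transverse distance function yields a smooth submersion $r\colon\widetilde{\widehat{E}}\to I\subseteq\mathbb{R}$ whose fibers are the leaves of the lifted foliation $\widetilde{\mathcal{F}}$. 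Using the constancy of intrinsic geometry along each leaf and the completeness of the model concentric hypersurfaces, one argues that $f|_{\widetilde L}$ is an isometric embedding for each leaf $\widetilde L$, producing a diffeomorphism $\widetilde{\widehat{E}}\cong \widetilde L\times I$, where $\widetilde L$ is a simply connected space form of curvature $\lambda$.

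\textbf{Quotient analysis and conclusion.} Since $\mathcal{F}$ and the transverse orientation are canonically defined, the deck group $\pi_1(\widehat{E})$ preserves them and acts fiber-preservingly on $\widetilde L\times I$, projecting to a homomorphism $\rho\colon \pi_1(\widehat E)\to \mathrm{Diff}(I)$. If $\widehat{E}$ is compact, then the quotient is compact, so $I=\mathbb R$, the image of $\rho$ is a cocompact subgroup of $(\mathbb R,+)$ isomorphic to $\mathbb Z$, and the kernel acts cocompactly on $\widetilde L$ with quotient a closed space form $P$ of curvature $\lambda$; hence $\widehat E$ fibers over $S^1=\mathbb R/\mathbb Z$ with fiber $P$. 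If $\widehat{E}$ is non-compact then $\overline{E}\cap C\neq\emptyset$; near any point of this boundary the immersion $f$ forces the concentric hypersurfaces to collapse to a single point, which in the Cartan classification only spheres can do, so the leaves are compact spherical space forms $\widehat{P}$. For $\widehat E$ to remain non-compact $\rho$ must act trivially on $I$ (any non-trivial translation would introduce a compact identification), yielding $\widehat E\cong \widehat P\times I$; the fact that the concentric sphere family exhausts $\mathbb H_\kappa^{n+1}$ minus its center then forces $I=\mathbb R$.

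\textbf{Main obstacle.} The subtlest step is verifying the embedding claim in the second paragraph: ruling out self-intersections of leaves and ``wrap-around'' of the transverse submersion $r$. In the non-compact case, extending $f$ continuously to $\overline{\widehat E}\cap C$ to identify the concentric family as spheres, and thus to conclude that the fiber $\widehat P$ is a closed spherical space form, requires careful asymptotic analysis of the immersion near isotropic points.
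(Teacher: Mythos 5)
Your approach is genuinely different from the paper's: the paper's proof of Proposition~\ref{dicho} reduces to Proposition~\ref{compactleaf} (all curvature leaves on $\widehat E$ are compact) and then invokes the strong form of the global Reeb stability theorem. Proposition~\ref{compactleaf} is the real content, and it is proved by purely foliation-theoretic means: properness of $H$ makes non-critical leaves connected components of level sets; critical leaves are shown to be totally geodesic; Novikov's structure theorem and the Carri\`ere--Ghys result (a transversely orientable totally geodesic codimension-one foliation with a compact leaf fibers over an interval) then rule out non-compact leaves. You instead try to build the product structure directly on $\widetilde{\widehat E}$ from the equivariant isometric immersion $f$. That is an appealing idea, but it has gaps that are not merely technical.

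The most serious problem is the ``concentric family'' claim. You assert that the images $f(\widetilde L)$ lie on a one-parameter concentric family of totally umbilical hypersurfaces of $\mathbb H_\kappa^{n+1}$ and in the non-compact case collapse to a point, forcing them to be spheres. But Proposition~\ref{dicho} carries no positivity hypothesis on $\lambda$; indeed the Corollary immediately after it points out that $\lambda$ may be negative or zero on a given $E$. When $\lambda\leq 0$ the umbilical hypersurfaces in question are equidistant hypersurfaces or horospheres, which are non-compact, do not form a concentric family collapsing to a point, and cannot give a compact $\widehat P$ by the route you describe. The canal-hypersurface/focal-point argument in the paper (Lemma~\ref{cap}) does analyze exactly this picture, but only under the hypothesis $\lambda\geq c>0$, which is part of Theorem~\ref{lambdapositive} and not of Proposition~\ref{dicho}.

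Second, the ``wrap-around'' issue you flag at the end is not a loose end but the crux of the matter. Since $f$ is only an immersion, nothing you have written rules out a Reeb-type component, or leaves that are non-compact and dense in an open saturated set, or a leaf wrapping around on itself. In other words, without compactness of the leaves the leaf space is not a one-manifold and the submersion $r$ does not exist; proving compactness is precisely Proposition~\ref{compactleaf}, which you have in effect assumed. (The appeal to Proposition~\ref{characterize} / Cartan is also restricted to $n\geq 4$, though this particular issue is repairable since the eigenspace decomposition of $h$ is already explicit from Proposition~\ref{immerse}.) As it stands, the argument does not close without importing the hard foliation-theoretic input that the paper supplies.
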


\begin{corollary}\label{sign}
If $E$ is a connected component of $M-C$ then 
$\lambda(p)$ is either positive, or negative, or else 
vanishes for all $p\in E$. 
\end{corollary}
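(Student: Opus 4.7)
The plan is to invoke Proposition \ref{dicho} to identify all curvature leaves in $\widehat{E}$ with a single fixed closed $(n-1)$-manifold $L$, and then appeal to the topological classification of closed constant curvature manifolds of dimension at least two, which forbids a given diffeomorphism type from carrying metrics of different signs of constant curvature.

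First I would recall from Section \ref{leafcurv} that each curvature leaf is a totally umbilical submanifold of $M$ whose intrinsic sectional curvature is constant and equal to the value of $\lambda$ along the leaf. By Proposition \ref{dicho}, either $\widehat{E}$ is diffeomorphic to a fibration over $S^1$ whose fibres are the curvature leaves, or $\widehat{E}\cong \widehat{P}\times \mathbb R$ with curvature leaves of the form $\widehat{P}\times\{t\}$ where $\widehat{P}$ is a closed space form. In either case all curvature leaves of $\widehat{E}$ are closed $(n-1)$-manifolds mutually diffeomorphic to a fixed manifold $L$, and $\dim L = n-1\geq 2$ since $n\geq 3$.

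Now I would conclude by the classification of closed constant sectional curvature manifolds. Indeed, if $L$ admits a metric of positive constant curvature, then $\pi_1(L)$ is finite and the universal cover of $L$ is $S^{n-1}$; if $L$ admits a flat metric, then $L$ is a Bieberbach manifold with infinite virtually abelian fundamental group; and if $L$ admits a metric of negative constant curvature, then $L$ is aspherical with universal cover $\mathbb H^{n-1}$. These three classes of closed $(n-1)$-manifolds are pairwise disjoint, distinguished, for instance, by the homotopy type of the universal cover. Hence the sign of $\lambda$ cannot vary over the leaves of $\widehat{E}$, and so it is constant on $\widehat{E}$. Since $\widehat{E}\to E$ is either the identity or a two-fold covering and $\lambda$ is pulled back from $E$, the sign of $\lambda$ is constant on $E$ as well.

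The only delicate step is to justify that the fibration and product structures produced by Proposition \ref{dicho} are indeed compatible with the foliation by curvature leaves; this should be read off from the proof of that proposition, since $\mathcal D$ is the natural integrable distribution underlying the structure of $\widehat{E}$. All other ingredients are purely topological and should not require any further analytic input.
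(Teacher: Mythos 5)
Your proposal is correct and follows essentially the same route as the paper: both deduce from Proposition \ref{dicho} that all curvature leaves in $\widehat{E}$ are diffeomorphic to a single closed $(n-1)$-manifold $\widehat{P}$ (with $n-1\geq 2$), and then observe that a fixed closed manifold cannot carry constant curvature metrics of two different signs. The paper phrases the final topological obstruction in terms of $\pi_1(\widehat{P})$ (word-hyperbolic and not virtually cyclic, versus virtually abelian, versus finite), while you phrase it in terms of the universal cover; these are equivalent standard formulations of the same fact.
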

\begin{proof}
With respect to the pull-back metric to the cover the slices 
$\widehat{P}\times \{a\}$ of $\widehat{E}$ have constant curvature $\lambda$. 
But a closed manifold $\widehat{P}$ cannot support constant 
curvature metrics  with different signs. In fact $\pi_1(\widehat{P})$ is either 
hyperbolic and not virtually cyclic, virtually abelian or finite, according to whether $\lambda$ is negative, null or positive. 
\end{proof}

The main step is to establish the following: 

\begin{proposition}\label{compactleaf}
Leaves of the curvature foliation on $\widehat{E}$ are compact. 
\end{proposition}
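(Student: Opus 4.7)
The plan is to identify each curvature leaf $L\subset\widehat E$ with a connected component of a level set of the smooth function $H$, and then appeal to the compactness of $M$. The starting point is the local computation of \cite{BP,GM}, used again in the proof of Lemma~\ref{conformal}, which yields $dH=\widehat\xi(H)\,\eta$; hence $dH$ annihilates the horizontal distribution $\mathcal D$ and $H$ is constant along every curvature leaf. Since by Lemma~\ref{continuous} the function $H$ extends smoothly to all of the compact manifold $M$, every level set $H^{-1}(h_0)\subset M$ is compact, and consequently every curvature leaf has compact closure in $M$.

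Next I would split $\widehat E$ according to the vanishing of $\widehat\xi(H)$. On the open subset $\widehat E^{\ast}=\{\widehat\xi(H)\neq 0\}$, $H$ is a submersion and its regular level sets are codimension-one submanifolds transverse to $\widehat\xi$ and tangent to $\mathcal D$; the leaves of $\mathcal D$ meeting $\widehat E^{\ast}$ are thus precisely the connected components of such level sets, automatically closed in $\widehat E^{\ast}$ and, by the previous paragraph, compact. On the complement $\{\widehat\xi(H)=0\}$ the umbilical factor $\alpha=\widehat\xi(H)/(2(N-H))$ vanishes, so the leaves there are totally geodesic with $\lambda=H$, and by Lemma~\ref{extension-lambda} the leaf curvature has a smooth extension across this locus. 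I would use a Reeb-type stability argument, exploiting the fact that the transverse structure of $\mathcal D$ degenerates in a controlled way, to propagate compactness from the nearby regular leaves to the degenerate ones.

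The main obstacle will be to rule out accumulation of a leaf $L$ on the isotropic set $C\subset\partial\widehat E$, since such accumulation would violate closedness of $L$ in $\widehat E$. For this I plan to invoke the equivariant isometric immersion $f\colon\widetilde M\to\mathbb H_\kappa^{n+1}$ of Proposition~\ref{immerse}: a lift of $L$ is totally umbilic in $\widetilde M$, and combined with the fact (visible from the explicit formula for $h$) that the second fundamental form of $f$ restricted to $\mathcal D$ is a scalar multiple of $g$, the lift is also totally umbilic in $\mathbb H_\kappa^{n+1}$. Consequently it lies inside a single totally umbilic hypersurface $\Sigma\subset\mathbb H_\kappa^{n+1}$ of intrinsic sectional curvature $\lambda(h_0)$. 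A limit point $p_\infty\in C$ of $L$ would force the tangent plane to $\Sigma$ at $f(p_\infty)$ to coincide with the fully umbilic tangent plane of $f(\widetilde M)$ there; combining this with the rigidity of the distinguished direction in a neighbourhood of $p_\infty$ granted by Lemma~\ref{local-line}, I expect to extract a contradiction, thereby confining $\overline L$ to $\widehat E$ and yielding compactness.
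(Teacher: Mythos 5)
Your opening move matches the paper's: $H$ is constant along every curvature leaf, $H$ is smooth and proper on the compact $M$, so every leaf sits inside a compact level set $H^{-1}(h_0)$ and hence has compact closure. The question is closedness of the leaf, and this is where the proposal has gaps.

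For leaves lying in $\widehat E^{\ast}=\{\widehat\xi(H)\neq 0\}$ you assert they are ``automatically closed in $\widehat E^{\ast}$.'' That does not follow: a leaf is merely an \emph{open} subset of a connected component of $H^{-1}(h_0)$, and open subsets of compact manifolds need not be closed. One must show the leaf is a \emph{whole} connected component. The paper does this via Reckziegel's completeness theorem: the lift of a curvature leaf to $\widetilde M$ is complete, so the leaf is geodesically complete, hence complete as a metric subspace of the compact level hypersurface, hence closed. Without that ingredient (or a substitute), the regular case is not finished.

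For critical leaves, ``a Reeb-type stability argument \ldots to propagate compactness from the nearby regular leaves to the degenerate ones'' does not work as stated: Reeb stability propagates compactness and finite holonomy outward from a known compact leaf, not from a regular family to a possibly non-compact degenerate leaf. The paper instead splits into two genuinely distinct sub-cases. (i) If a critical value $c_0$ is bracketed by arbitrarily close non-critical values, a non-compact critical leaf would lie in a Novikov component $U$ with $\overline U\subset H^{-1}([c_-,c_+])$ compact; all leaves in $\overline U$ are then totally geodesic (your Lemma~\ref{geodesic} observation), and Carri\`ere--Ghys classify codimension-one totally geodesic foliations with a compact leaf as fibrations over an interval, giving the contradiction. (ii) If $c_0$ is a local extremum of $H|_E$ so that no bracketing exists, the paper shows $\widehat\xi$ is geodesic there, the flow preserves leaves, $N\equiv 0$ on the extremal level set, and then $c_0=0$ at accumulation points in $C$, forcing points of $E$ to be isotropic, a contradiction. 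Your sketch does not engage either sub-case.

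Finally, the proposed use of the equivariant immersion into $\mathbb H_\kappa^{n+1}$ to rule out accumulation on $C$ is speculative (``I expect to extract a contradiction'') and not needed at this stage; the paper disposes of accumulation on $C$ purely through the $N=0$ computation in sub-case (ii). In summary, the first paragraph is on the right track, but the closedness of regular leaves, the critical-value dichotomy, and the extremal-value case are all genuine missing pieces rather than routine details.
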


\begin{proof}[Proof of Proposition \ref{dicho} 
assuming Proposition \ref{compactleaf}]
The strong form of the global Reeb stability theorem states that 
a codimension one transversely orientable 
foliation whose leaves are all compact 
is locally a fibration and the holonomy groups are finite (see 
\cite{Mil,EMS}).  
\end{proof}

{\em Outline of the proof of Proposition \ref{compactleaf}}: We use a Theorem of Reckziegel to prove that the foliation has complete leaves 
with respect to the induced metric in section \ref{sect:Reckziegel}. Since $H$ is constant along the curvature leaves we derive that those 
leaves on which the value of $H$  is non-critical should be compact in section \ref{sect:noncritical}. This can be improved 
to those leaves which can be approached by  non-critical leaves by using a Theorem of Haefliger (section \ref{sect:Haefliger}) which states that 
the union of closed leaves is closed. It remains then to analyze the leaves for which $H$ is constant on an open neighborhood.  
Such a maximal saturated open set  on which $H$ is constant admits a  metric completion whose general structure was 
described by Dippolito (see section \ref{Dippolito}). We will first show that the completion is transversely compact and further that 
the connected components of its boundary are compact, as they can be approached by non-critical and hence compact leaves, in section \ref{sect:constant}. We derive then that the completion is a foliated product and hence all leaves are compact.

\subsection{Completeness of curvature leaves}\label{sect:Reckziegel}
Let $f:N\to P$ be an isometric immersion of the Riemannian manifold $N$ into a Riemannian manifold $P$ with constant curvature and dimension strictly larger than that of $N$.
Set $\nu(f)$ for the normal bundle of $f$, $\nu^*(f)$ for its dual and $II:TN\times TN \to \nu(f)$ for the second fundamental tensor of $f$. 
The shape operator $A_{\eta}:TN\to TN$ is defined by the identity 
$g(A_{\eta}X,Y)=g_P(II(X,Y),\eta)$, for all sections $X,Y$ of $TN$ and $\eta$ of $\nu(f)$, where $g_P$ and $g$ are the metric tensors on $TP$ and $\nu(f)$, respectively. 
 
A {\em principal curvature function} is a continuous section $\mu$ of $\nu^*(f)$ for which the eigenspace  $\mathcal E(\mu(p))$ has dimension 
$\dim  \mathcal E(\mu(p))\geq 1$ for each $p\in N$, where we put 
\[  \mathcal E(\mu(p))= \{ v \in T_pN; A_{\eta} v= \mu(\eta) v, \; {\rm for \; any} \; \eta\in  \nu_p(f)\}\]
Let $U$ be an open subset of $N$ for which $\dim \mathcal  E(\mu(p))$ is constant, say equal to $k$, for every $p\in U$.

A key ingredient of the proof of Proposition \ref{compactleaf} is the following result of  Reckziegel (\cite{Re}, Thm.1.1, Rem.1,2): 

\begin{theorem}\label{Reckziegel}
\begin{enumerate}
\item The principal curvature function $\mu$ is smooth on $U$. 
\item The vector spaces $\mathcal E(\mu(p))$, for $p\in U$ form a subbundle $\mathcal E(\mu)$ of $TN|_U$ which is integrable. 
\item If $L$ is a leaf of the foliation defined by the subbundle $\mathcal E(\mu)$ then $L$ is a totally umbilical submanifold of $N$ of dimension $k$ and 
$f|_L: L\to P$ is a totally umbilical immersion. 
\item If $k\geq 2$ then 
for any geodesic $c:[0,\ell) \to L$ with $\ell < \infty$ for which $\lim_{s \to \ell} c(x)=q$ exists in $N$, then $\dim \mathcal E(\mu(q))=k$. 
\item If $k\geq 2$, $N$ is complete and $U$ is the subset of those points $p\in N$ for which $\dim \mathcal E(\mu(p))$ is minimal 
($U$ is open since this $\dim \mathcal  E(\mu(p))$ is upper semicontinuous), then 
every leaf $L$ of the foliation of $U$ defined by $\mathcal E(\mu)$ is a complete metric space. 
\end{enumerate}
\end{theorem}

Consider now a conformally flat 1-QC manifold $M$ of dimension $n\geq 3$. 
Proposition \ref{immerse} shows that there exists an equivariant isometric immersion 
$f: \widetilde{M}\to \mathbb H_{\kappa}^{n+1}$. Set  $\pi:\widetilde{M}\to M$ for the universal covering projection. 
Observe now that one principal curvature $\mu$ of the immersion $f$ is of multiplicity $n-1$  
precisely along the open set $U=\pi^{-1}(M-C)$. Moreover, the pull-back $\pi^{*}\mathcal D$ of the distribution $\mathcal D$ 
by $\pi$ coincides with the subbundle $\mathcal E(\mu)$ on $U$.  

Reckziegel's Theorem \ref{Reckziegel} in the case 
$N=\widetilde{M}$ and $P=\mathbb H_{\kappa}^{n+1}$ therefore has the following corollary:

\begin{lemma}\label{Reck}
The leaves of the foliation determined by $\pi^{*}\mathcal D$ on 
the subset $U=\pi^{-1}(M-C)$ of $\widetilde{M}$  are complete. 
\end{lemma}

\subsection{Haefliger's theorem about the set of compact leaves}\label{sect:Haefliger}
One more ingredient needed in the sequel is the following theorem due to Haefliger (see \cite{Hae}, Thm.3.2, p.386):

\begin{theorem}\label{Haefliger}
Let $V$ be a connected manifold with countable basis such that the rank of $H_1(V;\mathbb Q)$ is finite. Consider a codimension one 
foliation of $V$. Then the union of closed leaves of the foliation is a closed subset of $V$. 
Moreover, if $V$ is  also an open subset of a compact manifold, then the union of compact leaves of the foliation is a closed subset of $V$.  
\end{theorem}
\begin{remark}
The proof given by Haefliger works as well under the assumption that  $H_1(V;\Z/2\Z)$ is finite. 
\end{remark}

\subsection{Open saturated subsets after Dippolito}\label{Dippolito}
The structure of open saturated subsets $U$ of a foliated manifold $W$ was described by Dippolito (\cite{Dip}) 
in the case when $W$ is compact and  it was extended to relatively compact saturated  subsets $U$ of open manifolds in
\cite{CaC}. 
Specifically, let $\wideparen{U}$  
denote the completion of $U$ with respect to the induced Riemannian metric from $W$. Note that, 
in general, $\wideparen{U}$ is non-compact.  
Then the inclusion $i:U\to W$ induces an immersion $\wideparen{i}:\wideparen{U}\to W$, by 
extending $i$ to limit points of trajectories of a transverse foliation. 
  
Dippolito's nucleus Theorem still holds in the open case as explained in (\cite{CaC}, section 4):
\begin{proposition}
There exists a 
compact connected manifold with piecewise smooth boundary $K\subset \wideparen{U}$ called a {\em nucleus}  
and finitely many $V_i\subset \wideparen{U}$ such that:  
\begin{enumerate}
\item $\wideparen{U}=K\cup V_1\cup\cdots V_r$
\item $V_i$ is diffeomorphic to $B_i\times [0,1]$, where $B_i\times\{0\}$ and 
$B_i\times\{1\}$ are complete connected submanifolds of  the part of $\partial \wideparen{U}$ which is tangent to $\mathcal D$.  
\item for each $z\in B_i$, $\{z\}\times [0,1]$ is a trajectory of $\widehat{\xi}$. 
\item if $K\neq\emptyset$, then $V_i\cap K$ is non-empty and connected.
\item if $i\neq j$ then $V_i\cap V_j=\emptyset$. 
\item if $L$ is a component of $\partial\wideparen U$ which is tangent to $\mathcal D$ and $K$ is non-empty,  
then $K\cap L$ is non-empty and connected. 
\end{enumerate}
\end{proposition}
The nucleus is not unique in general.  If we can take $K=\emptyset$, then $\wideparen{U}=L\times [0,1]$ is a 
{\em foliated product}, foliated by the leaves of the foliation diffeomorphic to $L$ and also transversally by closed intervals.  

We will need in the sequel the following result stated by Dippolito (\cite{Dip}, Prop.2)  in the compact case and by Cantwell and Conlon \cite{CaC}, Lemma 4.2) in the open case: 
\begin{lemma}\label{finiteborder}
Let $U\subset W$ be  an open connected saturated set. 
Then  $\partial \wideparen{U}$ has only finitely many connected components tangent to $\mathcal D$. 
\end{lemma}
Note that the number of such boundary components is bounded by twice the number of
so-called biregular charts needed to cover $\overline{U}$, which is finite since $\overline{U}$ is compact (see \cite{Dip}).

\subsection{Proof of Proposition \ref{compactleaf} for everywhere non-constant $H$}\label{sect:noncritical}
The function $H:M\to \R$ is smooth and proper, because $M$ is compact. 
Then, its critical values are nowhere dense and the preimage 
of any non-critical value is a codimension one compact 
submanifold of $M$. A curvature leaf $L$ 
will be called {\em non-critical} if $H(L)$ is a non-critical value. 
We want to stress that a curvature leaf $L$ is contained in $M-C$.

\begin{lemma}\label{noncritical}
A non-critical curvature leave $L$ is a connected component of 
$H^{-1}(H(L))$, and in particular it is compact. 
\end{lemma}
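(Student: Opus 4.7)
The plan is to show that $H$ is constant along every curvature leaf and that the tangent spaces of the curvature foliation coincide with those of the regular level set. Once those two facts are in place, a connected leaf contained in a connected component of a regular level set must in fact equal that component.

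First I would exploit the identity $|\widehat{\xi}(H)| = \|{\rm grad}\, H\|$ recalled after Lemma \ref{extension-lambda}. Since $\widehat{\xi}(H) = g({\rm grad}\, H, \widehat{\xi})$ and $\widehat{\xi}$ is a unit vector field, the Cauchy--Schwarz equality case implies that ${\rm grad}\, H$ is everywhere (pointwise) parallel to $\widehat{\xi}$ on the open set where $\xi$ is defined. Equivalently, $dH$ vanishes on the horizontal distribution $\mathcal{D}$. Since a curvature leaf $L$ is an integral submanifold of $\mathcal{D}$, the function $H$ is constant along $L$, so $L \subset H^{-1}(H(L))$.

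Next, assume $H(L)$ is a non-critical value. Then $\Sigma := H^{-1}(H(L))$ is a smooth embedded hypersurface of $M$, and at every point of $\Sigma$ the tangent space is the orthogonal complement of ${\rm grad}\, H$. By the first step, ${\rm grad}\, H$ is parallel to $\widehat{\xi}$, so $T_p\Sigma = \mathcal{D}_p$ at each $p \in L \subset \Sigma$. Thus any connected component $N$ of $\Sigma$ that meets $L$ is a connected integral manifold of the foliation $\mathcal{D}$, and by uniqueness of integral manifolds of a foliation through a point, $N$ is contained in the leaf $L$. Conversely, $L$ is connected (leaves of a foliation are connected by definition), and $L \subset \Sigma$, so $L$ is contained in a single connected component $N$ of $\Sigma$. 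Combining both inclusions gives $L = N$.

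Since $M$ is compact and $\Sigma$ is closed in $M$, every connected component of $\Sigma$ is compact, so $L$ is compact. The only slightly delicate step is the first, verifying that ${\rm grad}\, H$ is parallel to $\widehat{\xi}$ from the norm identity — but this is essentially a pointwise Cauchy--Schwarz argument, so no real obstacle arises. The rest is a clean matching of tangent spaces between the leaf and the regular level hypersurface.
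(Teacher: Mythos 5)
There is a genuine gap. Your argument correctly establishes that $\operatorname{grad} H$ is parallel to $\widehat{\xi}$ on $M-C$, hence that $H$ is constant on curvature leaves and that $T_p\Sigma = \mathcal{D}_p$ at every $p \in L$; this shows $L$ is an \emph{open} subset of the component $N$ of $\Sigma := H^{-1}(H(L))$ containing it. But the jump to ``$N$ is a connected integral manifold of $\mathcal{D}$'' is not justified: you only checked the tangent-space identity $T_p\Sigma = \mathcal{D}_p$ on $L \subset M-C$ (and it extends only to $N\cap(M-C)$, since $\mathcal{D}$ and $\widehat{\xi}$ are not defined on $C$). Nothing in your argument rules out the scenario where $N$ contains isotropic points of $C$; in that case $L$ would be a proper open (non-closed) subset of $N$, $N$ would not be an integral manifold of $\mathcal{D}$, and the conclusion $N\subset L$ would fail. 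Equivalently, you prove $L$ is open in $N$ but never prove it is closed in $N$, which is exactly where the content of the lemma lies: a priori the leaf $L$ could accumulate on isotropic points inside the level hypersurface.

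The paper closes precisely this hole by a completeness argument. Using Reckziegel's theorem, each curvature leaf is complete in its intrinsic metric. Since the intrinsic metric on $L$ coincides with the metric induced from the compact hypersurface $\Sigma$, the open subset $L$ is then a complete subspace of $\Sigma$ and therefore closed in $\Sigma$; open, closed, nonempty, and connected forces $L$ to be a full component of $\Sigma$. Your first two steps are sound and recover the openness half of this dichotomy, but without a completeness (or some other closedness) input the proof is incomplete; you would need to either invoke Reckziegel as the paper does, or supply an independent argument that no sequence in $L$ can converge to a point of $C$ within the level set.
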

\begin{proof} 
Recall that $H$ is constant on curvature leaves. This follows from (\cite{BP}, (3.5), and \cite{GM}, (3.4-3.5)(i.)), which state that 
\[ X(H)=0, \; {\rm for}   \; X\in \mathcal D\]

If $\pi:\widetilde{M}\to M$ is the universal covering projection, then 
each component of the preimage $\pi^{-1}(L)$ of $L$ 
in $\widetilde{M}$ is complete, following Reckziegel's Lemma \ref{Reck}.
This implies that the leaf $L$ itself should be 
geodesically complete, as otherwise 
we could adjoin  additional limit points to $\pi^{-1}(L)$  contradicting 
its completeness.  Thus $L$ is complete with respect to its 
intrinsic metric. 

As a word of warning the projection $\pi$ is not a closed map, if 
the covering has infinite degree. In particular, $L$ is not necessarily 
a complete subspace of $M$ and in particular 
it might not be a closed subset of $M$.  

If $H(L)$ is a non-critical value of $H$ then  $H^{-1}(H(L))$ is a smooth hypersurface of $M-C$; since $H$ is proper, 
 $H^{-1}(H(L))$ is closed.  Then $L$ is a codimension zero 
submanifold of  the closed manifold $H^{-1}(H(L))$. The level hypersurface 
$H^{-1}(H(L))$ has an induced Riemannian metric from $M$ whose restriction 
to its open subset $L$ coincides with the intrinsic metric of $L$. 
Therefore $L$ is a complete subspace of  the compact space 
$H^{-1}(H(L))$  and hence it has also to be closed.  
This implies that  $L$ must be a connected component 
of the level hypersurface  $H^{-1}(H(L))$ and hence compact. 
\end{proof}

Note that, as $\mathcal D$ is an integrable distribution on $M-C$, 
all curvature leaves, in particular critical leaves, are nonsingular.

Suppose now that $H$ is {\em everywhere non-constant}, namely that 
there is no open subset $U\subset M-C$ for which the restriction of $H$ to $U$ is constant. 
We want to apply Haefliger's result to the foliation by curvature leaves of the open manifold $M-C$. 
Assume that the set of non-compact leaves is non-empty. 
By Lemma \ref{Haefliger} the union $U$  of non-compact leaves is an open subset of $M-C$.  
Let $p\in U$, so that $L_p$ is a non-compact leaf and $H(L_p)=c$ is a critical value of $H$. 
Consider  a germ of an integral curve $\gamma\subset U$ of the line field $\xi$. Then the restriction of $H$ to $\gamma$ is non-constant, as 
otherwise  the union of leaves intersecting $\gamma$ would contain an open set on which $H$ is constant, contradicting the fact that  $H$ is everywhere non-constant. 
It follows that $H(\gamma)$ contains a small interval and hence $c$ can be approached by a sequence of regular values $c_i\in H(\gamma)$. 
Take points $p_i\in \gamma$ with $H(p_i)=c_i$. Therefore the leaves $L_{p_i}$ are non-critical leaves and hence compact leaves by above. 
This contradicts the fact that $U$ is the union of non-compact leaves. 
This ends the proof of Proposition \ref{compactleaf} for the case of everywhere non-constant $H$.

\subsection{Proof of Proposition \ref{compactleaf} for generic but not everywhere non-constant $H$}\label{sect:constant}
By the previous section  \ref{sect:noncritical} it remains to show that leaves passing through an open set on which $H$ is constant are also 
compact. Let $p\in M$ be a point such that $H$ is constant on an open neighborhood $U$ of $p$ in $M-C$.
Since the union of leaves intersecting an open set is an open set we can suppose that $U$ is saturated, namely union of leaves.
Moreover, as $H$ is smooth, $H$ is constant on the closure $\overline{U}$. 
Curvature leaves $L_p$ arising in this way will be called {\em super-critical}. 
  
\begin{lemma}\label{totallygeodesic}
A super-critical curvature leaf $L_p$ is totally geodesic. 
\end{lemma} 
\begin{proof}
Since $H$ is constant in a neighborhood of $p$ we have $dH|_p=0$.  Since $p\in M-C$, we can write  
$dH=\widehat{\xi}(H) \eta$, for some locally defined 
vector field $\widehat{\xi}$ lifting the line field $\xi$. Thus 
$\nabla_X\widehat{\xi}=\alpha X=\frac{\widehat{\xi}(H)}{N-H} X$ vanishes at $p$, for any 
horizontal vector field $X$. In particular $\alpha(p)=0$  and 
$\lambda(p)=H(p)$, according to the proof of Lemma \ref{extension-lambda}.   
Since $\lambda$ is constant on the level 
hypersurfaces of $H$ it follows that $\lambda(q)=H(q)=c_0$, 
for any $q\in L_p$ and thus the curvature leaf $L_p$ is totally geodesic. 
\end{proof}

Let $E$ be the connected component of $M-C$ containing $p$,  
$\widehat{E}$ the cover of $E$ determined by  $w_1(\xi|_E)$ 
and $\widehat{\xi}$ be a vector field lift of the line field. 
The cover $\widehat{E}$ is endowed with a pull-back metric and a 
completely integrable distribution still denoted 
$g$ and $\mathcal D$, respectively. We keep the notation $U$ for the lift of $U$ and work henceforth on $\widehat{E}$ in this section. 
Note that $\widehat{E}$ is contained in $\widehat{\Omega}$ and has compact closure there. 

We want to construct a {\em compact} submanifold $N\subset \widehat{E}$ 
containing $U$ which is foliated by curvature leaves.   
Possibly enlarging $U$ we can assume that $U$ is the 
{\em maximal} open connected saturated subset of $E$ containing $p$ 
such that $H|_{U}=H(p)$. Note that $U$ is the connected component 
of ${\rm int}(H^{-1}(H(p))\cap \widehat{E})$ containing $p$. Since $H$ is not eventually constant, the closure 
$\overline{U}$ in $M$ is contained in $\widehat{E}$, so that $U$ is relatively compact in $\widehat{E}$.

For every $p\in U$ we consider the {\em flow} $\varphi_p(t)$ determined by $\widehat{\xi}$ with initial condition 
$\varphi(0)=p$. Then $\varphi_p(t)$ is not necessarily defined on all of $\R$, since 
$\xi$ was only defined on an open neighborhood $\Omega$ of $\overline{M-C}$. 
Let $I_p\ni 0$ be the maximal interval consisting of  $t\in \R$ for which $\varphi_p(t)\in \widehat{E}$. 
Set further $J_p\subset I_p$ for the maximal interval on which $H\circ \varphi_p$ is constant, namely such that $0\in J_p$ and 
all $t\in J_p$ satisfy $H(\varphi_p(t))=H(p)$. Then $J_p$ is non-empty, because it contains 
the connected component of $0$ in $\varphi_p^{-1}(U)$.

\begin{lemma}\label{maxinterval}
The maximal interval $J_p$ on which $H\circ \varphi_p$ is constant is a finite proper sub-interval of $I_p$. 
\end{lemma}
\begin{proof}
If $I_p$ has a finite supremum (or infimum), then $\varphi_p(t)$ 
reaches $C$ in finite time. In this case $\sup J_p <\sup I_p$. Otherwise $H$ would be constant on the set 
$U'$, which is the union of all leaves intersecting $\varphi_p(I_p)$ 
(actually a small open neighborhood of it), while $\overline{U'}\cap C\neq \emptyset$.  
This would contradict the genericity of $H$. 

If $I$ is unbounded, say it contains $[0,\infty)$, we have two cases: 
either the forward limit set $\omega(\varphi_p)$ intersects $C$, or else $\omega(\varphi_p)\subset \widehat{E}$. 
Recall that the {\em forward limit} (or {\em  $\omega$-limit}) set of the flow 
$\varphi_p$ is the set of points obtained as limits of sequences 
$\varphi_p(s_i)$, where $s_i\to \infty$. In particular,  $\omega(\varphi_p)$ is a compact set 
invariant by the flow. 

In the first case, if $\sup J_p=\infty$, then again 
$H$ is constant on the union $U'$ of leaves intersecting  $\varphi(I_p)$, 
while $\overline{U'}\cap C\neq \emptyset$, 
contradicting the genericity of $H$. 

Consider now the second case,  when $\omega(\varphi_p)\subset \widehat{E}$ and assume that $J_p\supset [0,\infty)$. 
Recall that $H|_{\widehat{E}}$ is non-constant and thus there exist non-critical values $c\neq  H(p)$.
Let then $L_q$ be a non-critical leaf through a point $q\in E$, with $H(L_q)=c$. 
By Lemma \ref{noncritical} $L_q$ is a compact leaf. 

Further $\omega(\varphi_p)\cap L_q=\emptyset$, since $H(\omega(\varphi_p))=H(p)$ and 
$H(L_q)=c\neq H(p)$. Since both sets are compact, the distance between $\omega(\varphi_p)$ and $L_q$ is non-zero. 
Let then  $\beta:[0,1]\to \widehat{E}$, with  $\beta(0)\in \omega(\varphi_p)$,  $\beta(1)\in L_q$
be a minimal geodesic in $M$ realizing the distance 
between them.  
Since $\omega(\varphi_p)$ is invariant by the one-parameter flow generated by $\widehat{\xi}$ 
the integral orbit $\mathcal O$ of $\beta(0)$ under this one-parameter flow is 
contained in   $\omega(\varphi_p)$. 
Standard variational arguments show that $\beta$ must be orthogonal to both 
the  orbit $\mathcal O$ and the leaf $L_q$, namely that 
\[g(\dot{\beta}(0),\widehat{\xi}|_{\beta(0)})=0, \; g(\dot{\beta}(1), \mathcal D|_{\beta(1)})=0
\]
The first equation means that $\dot{\beta(0)}$ is tangent to the leaf 
$L_{\beta(0)}$ passing through $\beta(0)\in \widehat{E}$. 

Write then $\beta(0)=\lim \varphi_p(s_i)$, where $s_i\to \infty$. 
All leaves $L_{\varphi_p(s_i)}$ are super-critical (by our assumption on $J$) and hence 
$\alpha(x)=0$, for every $x\in L_{\varphi_p(s_i)}$. Since $\alpha$ is smooth, we derive that 
$\alpha(x)=0$ for all $x\in L_{\beta(0)}$, which means that $L_{\beta(0)}$  is a totally geodesic leaf.
 
Eventually, a minimal geodesic in $M$ whose direction is tangent to a totally geodesic 
leaf must be contained in that leaf. On the other hand the second equation 
above shows that $\beta$ is orthogonal to the leaf $L_q$ at $\beta(1)$,  which is 
a contradiction.
\end{proof}

Note that Lemma \ref{maxinterval} already shows that $\wideparen{U}$ is foliated transversely by closed intervals. 

\begin{lemma}\label{border}
Every component of $\delta(\wideparen{U})=\wideparen{i}(\partial \wideparen{U})$ is a compact leaf. 
\end{lemma}
\begin{proof}
The image $\wideparen{i}(\wideparen{U})$ of the completion $\wideparen{U}$ is the subset of $\widehat{\Omega}$ obtained 
by completing every transverse trajectory of $\widehat{\xi}$, namely replacing every open interval  $\varphi_p(J_p)$ by the corresponding closed 
interval  (or circle) $\varphi_p(\overline{J_p})$. 

In general, one can have part of the border tangent to the foliation $\mathcal D$ and part of it 
tangent to some trajectory of $\widehat{\xi}$ (see \cite{CaC}, section 4). However, the second case cannot occur here 
because $\widehat{E}$ is relatively compact within $\widehat{\Omega}$, more precisely it is 
contained in the  pull-back of $\overline{E}$ to the double cover $\widehat{\Omega}$. 
Thus every point of  $\delta(\wideparen{U})$ is of the form $\varphi_p(\sup J_p)$, or $\varphi_p(\inf J_p)$, for some $p\in U$ and 
 $\delta(\wideparen{U})$ is also the union of the corresponding curvature leaves passing through these points. 

As $\overline{J_p}$ is a closed proper interval within $I_p$, the restriction $H|_{\varphi_p([\sup J_p, \sup J_p+\varepsilon]))}$
is not constant and hence there exists a sequence of 
non-critical values of the form $H(\varphi_p(\sup J_p+t_i))$ approaching  $H(p)$, 
for some decreasing $t_i\to 0$. 
Since non-critical leaves are compact, Haefliger's Theorem   \ref{Haefliger} implies  
that the leaves $L_{\varphi_p(\sup J_p)}$ and $L_{\varphi_p(\inf J_p)}$ are compact. 
\end{proof}

There are only finitely many components of  $\partial\wideparen{U}$ and hence 
$\delta(\wideparen{U})$, by Lemma \ref{finiteborder}. It follows that $\wideparen{U}$ is a compact manifold. 
Now, every pair of points $p,q\in \partial \wideparen{U}$ having the same image by $\wideparen{i}$,  give raise 
to two leaves which are tangent at $\wideparen{i}$ so that they  should coincide. 
Therefore $N=\wideparen{i}(\wideparen{U})$  is a 
codimension zero compact submanifold of $\widehat{E}$. 
Specifically, $N$ is the closure of the connected component of ${\rm int}(H^{-1}(H(p))\cap \widehat{E})$ containing $p$.

Proposition \ref{compactleaf} will then follow from the following:

\begin{proposition}\label{cylinder} 
The submanifold   $N$  is  diffeomorphic to a cylinder foliated as a product.
\end{proposition}
\begin{proof}
Lemmas \ref{maxinterval} and \ref{border} and Dippolito's description from section \ref{Dippolito}, 
show that $U$ has an empty nucleus, and hence $\wideparen{U}$  is a foliated product by closed intervals. 

Here is a direct proof. 
Each component of $\partial N$ has a sign determined by 
the fact that $\widehat{\xi}$ points either inward or outward with respect to $N$. 
This is well-defined because $\widehat{\xi}$ is a smooth vector field on $\widehat{E}$ 
orthogonal to $\partial N$, so that if $\widehat{\xi}$ points inward at some point of 
$\partial N$ then it points inward at all points of its connected boundary component.

Let $\partial^+ N$ be the union of those components of 
 $\partial N$ for which 
$\widehat{\xi}$ is inward pointing and  $\partial^- N$ be its complementary. 
Observe that $N$ cannot be a closed manifold without boundary since it is 
contained in $\widehat{E}$. By the symmetry of the situation we can assume that 
$\partial^+ N$ is non-empty. 
We claim now that:
\begin{lemma}
Every integral trajectory of $\widehat{\xi}$ starting at a point 
$x\in \partial^+ N$ should cross  $\partial^- N$.
\end{lemma} 
\begin{proof}
Assume the contrary. 
As the trajectory  cannot cross again $\partial^+ N$, 
necessarily in outward direction, because this  would contradict the choice of 
$\partial^+ N$, it will remain forever in $N$.  
Thus its  forward limit set   is a 
compact invariant subset of $N$. The argument used in the proof of Lemma \ref{maxinterval} shows that this is impossible. 
\end{proof}

Let then denote by $l(x)$ the length of the integral trajectory $\varphi_x$
of $\widehat{\xi}$  between $x$ and its endpoint on $\partial^- N$.
We consider then $\phi: \partial^+ N\times [0,1]\to N$ 
defined by: 
\[ \phi(x,t)= \varphi_x\left(\frac{t}{l(x)}\right)\]
Then $\phi$ is a diffeomorphism on its image. 
The only obstruction for  the surjectivity of $\phi$ 
is the existence of closed trajectories of $\widehat{\xi}$. 
Again, the argument used in the proof of Lemma \ref{maxinterval}  shows that 
there are no closed orbits of $\widehat{\xi}$ contained in ${\rm int}(N)$. 
This proves Proposition \ref{cylinder}. 
\end{proof}

\subsection{Another proof of Proposition \ref{compactleaf}}
We first recall that two points of a foliated manifold are {\em Novikov equivalent} if  
they belong to the same leaf or to a closed transversal. 
An equivalence class of points is called a {\em Novikov component} of the foliation. 
The following classical 
result is due to Novikov:
\begin{lemma}[\cite{Nov}]\label{novikov}
Assume that we have a foliation of a compact manifold. 
Then every Novikov component is either a compact leaf which does not admit 
any closed transversal or else an open codimension zero 
submanifold whose closure has finitely many boundary components 
consisting of compact leaves. 
\end{lemma}
An alternative way to complete the proof of Proposition \ref{compactleaf}, 
which does not use Proposition \ref{cylinder} 
is as follows. If  $H$ is generic and 
not everywhere non-constant, we proved above that we have an induced 
foliation of the compact submanifold  $N\subset \widehat{E}$. 
If some  super critical leaf $L_p$ were not compact, 
then  $L_p$ would be contained within some open Novikov component 
$V\subset N$.  Recall that a Novikov component $V$ is a saturated open submanifold   
such that every leaf contained in $V$ is non-compact  and approaches some boundary leaf of 
its closure $\overline{V}$.  Moreover, from the proof of Lemma \ref{totallygeodesic} 
all leaves in $V$, including the boundary ones in $\overline{V}$ should be totally geodesic.
We are in position to apply now the following result of Carri\`ere and Ghys (\cite{Car-Ghys}) on 
totally geodesic foliations:   
  
\begin{lemma}\label{cylinder2} 
The foliated manifold $\overline{V}$  is  diffeomorphic to a 
cylinder foliated as a product.
\end{lemma}
\begin{proof}
The codimension one foliation of $\overline{V}$ is transversely orientable and 
totally geodesic. According to (\cite{Car-Ghys}, Prop. II.2) such a foliation with a 
compact leaf is a fibration over an interval. 
\end{proof}

But this leads us to a contradiction, as we supposed $V$ to be a Novikov component. 
This ends the proof of Proposition \ref{compactleaf} in the case when $H$ is generic and 
not everywhere non-constant.

\begin{remark}
The arguments in the proof of Lemma \ref{totallygeodesic} 
also show that critical leaves $L_p$ for which 
$\nabla H|_p=0$ are totally geodesic. 
\end{remark}

\begin{remark}
The present proof does not extend to the case when $H$ is eventually constant. There exist 
for instance totally geodesic foliations all whose leaves are non-compact and have constant 
curvature, as the ones described in \cite{Car-Ghys} in dimension 3.       
\end{remark}

\subsection{Uniform geometry of curvature leaves}\label{Uniformgeom}
In this section we restrict ourselves to locally $1$-QC manifolds. 
With notation from above we set: 
\[ \alpha_V(p)=g(\nabla_{X_V}\widehat{\xi}_V, X_V), \; {\rm for} \; p\in V\cap (M-C)\]
Then curvature leaves are totally umbilical with principal curvatures equal to $\alpha_V$. 
Note that if $\xi|_{L_p}$ is orientable then the curvature leaf $L_p$ is $2$-sided and there exists a coherent choice of the lifts $\widehat{\xi}_V$ such that the value of $\alpha_V(x)$ is independent on $V$, for any $x\in L_p$. 
However, if $\xi|_{L_p}$ is non-orientable only the absolute value  $|\alpha_V(x)|$ is well-defined globally, 
as the curvature leaf is $1$-sided. Moreover, from (\cite{BP}, (3.5)) and (\cite{GM}, (3.4-3.5)(i)) 
$\alpha_V(x)$ and  $|\alpha_V(x)|$ are constant along a curvature leaf 
$L_p$ if  $\xi|_{L_p}$ is orientable and non-orientable, respectively. 

Recall that the {\em focal radius} of $L$ at  a point $p\in L$ is the smallest $r$ for which $r\widehat{\xi}|_p$
is a critical point of the exponential map and the {\em focal radius} $f_{L\subset M}$ is the smallest focal radius at $p$ among all $p\in L$.

\begin{lemma}\label{injectivity}
Let $M$ be a compact locally $1$-QC manifold. Then 
there exists a positive constant  $\gamma=\gamma(A)>0$ such that 
given a manifold  $M$ with curvature bounded by $A$ any codimension 1 submanifold $L\subset M$ satisfying  $\parallel II_{L} \parallel \leq A$ has both injectivity radius  $i_L$ and focal radius  $f_{L\subset M}$ (i.e. the distance to the closest focal point of $L$) 
bounded from below by $\gamma$.
\end{lemma}
\begin{proof}
The first claim is the content of (\cite{FG1}, Lemma 3). The second claim is well-known (see \cite{FG2}, Lemma 2.3). 
An explicit lower bound for the focal radius of $L$ was provided by Warner in (\cite{War}, Cor. 4.2), namely the focal radius of the totally umbilical 
hypersurface of principal curvature $\min \alpha$ within   the space 
of constant curvature $\max(H,N)$. 
\end{proof}

For every $p\in M-C$ we consider the metric disk $D_{\gamma}(p)\subset L_p$ of radius 
$\gamma$ within the leaf $L_p$ endowed with the induced metric. 
Let $i_{p, \gamma}$ denote the {\em normal injectivity radius} of $D_{\gamma}(p)$ inside $M$, i.e. the largest $r$ such that the restriction of the 
exponential map to the radius $r$ disk subbundle of the normal bundle of $L_p$ (restricted to  $D_{\gamma}(p)$) provides a diffeomorphism onto its image. 

We say that an arc $\zeta$ joining $p,q\in L$ is an {\em orthogeodesic} if it is a minimal geodesic in $M$ between $p$ and $q$ which is orthogonal 
to $L$ at its endpoints.  We denote by $o_{D_{\gamma}(p)\subset M}$ the smallest orthogeodesic length between points of $D_{\gamma}(p)$. 
It follows from (\cite{Herm}, Thm. 4.2) that the normal injectivity radius $i_{p,\gamma}$ is given by 
\[ i_{p,\gamma}= \min(f_{D_{\gamma}(p)\subset M}, \frac12 o_{D_{\gamma}(p)\subset M}) \]

Our next  goal is to show that $ i_{p,\gamma}$ is bounded from below. We will follow closely the reasoning of Corlette from 
\cite{Cor}. 

Let $d_L$ denote the Riemannian distance on the leaf $L$ and $d_M$ the Riemannian metric on $M$. 
We denote by $D^M_{r}(p)\subset M$ the radius $r$ metric disk on $M$ with center at $p$. 
It will be convenient for us to change the metric into a flat metric in a small neighborhood of a given point. The exponential map $\exp_p: T_pM \to M$ 
restricts to a diffeomorphism from  the tangent metric ball $\{v \in T_pM; \parallel v \parallel \leq \gamma\}$ onto $D^M_{\gamma}(p)$. 
Let $g^{\flat}$ denote the pull-back by $\exp_p^{-1}$ of the flat Riemannian metric on $T_pM$ and  $d^{\flat}_M$ the associated distance. 
According to (\cite{Cor}, Lemma 2.1, and the lines after its proof p.157) by choosing $\gamma$ sufficiently small, we can arrange that the distorsion 
between  $g^{\flat}$ and the initial metric $g$ be uniformly bounded: 

\begin{lemma}[\cite{Cor}, Lemma 2.1]\label{flatdeform}
For any $\lambda >1$, $\delta>0$ there exists some $R(\lambda,\delta)$ (smaller than the injectivity radius)  with the property that:
\begin{enumerate}
\item 
for any $p\in M$ the identity map  between $(D^M_{\gamma}(p), d_M)$  and  $(D^M_{\gamma}(p), d^{\flat}_M)$ is 
$\lambda$-Lipschitz, for any $\gamma \leq R(\lambda, \delta)$; 
\item the difference 
between the two Levi-Civita connections on these two metric balls is bounded by 
$\delta$ (in the $\mathcal C^0$ topology). 
\end{enumerate}
In particular, if $J\subset D^M_{\gamma}(p)$ is a submanifold with $\parallel II_J\parallel < A$ in the initial  metric $g$, then 
 $\parallel II^{\flat}_J\parallel < C(A,\lambda, \delta)$ in the flat metric $g^{\flat}$, where $ C(A,\lambda, \delta)$ is a continuous function 
 depending only on $A, \lambda, \delta$ such that $\lim_{\lambda \to 1, \delta\to 0}C(A,\lambda,\delta)=A$. 
\end{lemma}

The second ingredient needed is the following version of a theorem of Schwarz, as stated by Corlette in every dimension:

\begin{theorem}[\cite{Cor}, Thm. 2.3]\label{Schwarz}
Let $c$ be a curve in the Euclidean space joining the points $q$ and $q'$  and whose second fundamental form is 
bounded in norm by $\frac{1}{r}$, where $r$ is at least half the distance between $q$ and $q'$. 
Consider the circle of curvature $\frac{1}{r}$ which passes through $q$ and $q'$, which is separated 
by the pair of points into two arcs, say of lengths $\ell_1 \leq \ell_2$. 
Then, either the length of $c$ is smaller than $\ell_1$ or else it is larger than $\ell_2$. 
\end{theorem}

We are ready to prove now: 
\begin{lemma}\label{localnormal}
Let $M$ be a compact locally $1$-QC manifold.  Then there exists some $\gamma$ depending only on $A$ and the sectional curvature bounds 
such that the normal injectivity radius  $i_{p,\gamma}$ of curvature leaves in $E$ is  
uniformly bounded from below.   
\end{lemma}
\begin{proof}
Choose $\lambda, \delta$ such that $C(A,\lambda,\delta) < 2A$ and
$\lambda <1.1$.  Further change $\gamma$ into $\min(\gamma, \frac{1}{2A}, \frac{R(\lambda, \delta)}{3})$.

Consider an orthogeodesic $\beta$ of length $2t$ joining the points $q,q'\in D_{\gamma}(p)$. 
It is enough to consider that $\beta$ is the shortest one, namely the midpoint $u$ of $\beta$ 
has $d_M(u,x) \geq t$, for every $x\in D_{\gamma}(p)$. 
Let $c$ be a geodesic on $L$ joining $q$ and $q'$, which is therefore contained in $D_{\gamma}(p)
\subset D^M_{\gamma}(p)$. Observe that the second fundamental form of the geodesic $c$ is bounded in norm by $A$.  
Now, $D^M_{\gamma}(p)\subset D^M_{R(\lambda,\delta)}(u)$. 

Consider next the flat metric $g^{\flat}$ on $D^M_{R(\lambda,\delta)}(u)$. Then the metric sphere 
$\partial D^M_{t}(u)$ becomes an ordinary sphere of radius $t$ in the Euclidean space. 
The second fundamental form of $c$ in the flat metric $g^{\flat}$ is bounded by $2A$, by Lemma \ref{flatdeform}. 
The shortest arc of circle  joining $q$ to $q'$ of curvature equal to $2A$ has length 
$\ell_1=\frac{\arcsin(2At)}{A}$.

Suppose that $100t < \gamma$. Then $2At< \frac{1}{100}$ and so $\ell_1 < 2.2 t$. 
On the other hand the curve $c$ lies outside the sphere of radius $t$ and hence its length is 
at least the length of a great arc of circle joining $q$ and $q'$ on the sphere, namely $\pi  t$. 

From  the Schwarz Theorem \ref{Schwarz} it follows that the length of $c$ must be at least 
the length $\ell_2=\frac{\pi}{A}-\ell_1$ of the longest arc of circle  joining $q$ to $q'$ of curvature equal to $2A$. 
Observe now that: 
\[ \ell_2> \frac{\pi}{A}-2.2 t > \left(\pi -\frac{1}{40}\right)\frac{1}{A} > \frac{1.1}{A} > 2.2 \gamma\]
Therefore the curve $c$ in the flat metric  should have length greater than $2.2 \gamma$, contradicting the fact that 
$d_L(q,q') \leq 2\gamma$ and the Lipschitz constant is $\lambda < 1.1$. 

We derive that  $t \geq \frac{1}{100} \gamma$ and hence 
\[ i_{p,\gamma}= \min(f_{D_{\gamma}(p)\subset M}, \frac12 o_{D_{\gamma}(p)\subset M}) \geq \frac{1}{100} \gamma\]

\end{proof}

\begin{lemma}\label{Weinstein}
Let $M$ be a compact locally $1$-QC manifold. Then the curvature leaves  have uniformly bounded metric distorsion at small scales: 
there exists some constant $B>0$ such that for any leaf $L$ and any two points $p,q\in L$ with 
 $d_L(p,q) < \gamma$, we have $d_M(p,q)\geq B d_L(p,q)$.  
\end{lemma}
\begin{proof}
This follows directly from Lemma \ref{localnormal} and Weinstein's estimate of the distorsion for submanifolds 
with normal  injectivity radius  bounded from below (see \cite{Wein}, Cor. 3.5). 
\end{proof}

\begin{lemma}\label{bdvolume}
Assume that $M$ is locally $1$-QC and the metric is generic. 
\begin{enumerate}
\item Let $E$ be a connected component of $M-C$ with the property that $\lambda|_E$ is not identically zero. 
Then both the volume and the diameter of a curvature leaf $L\subset E$ are uniformly bounded 
from above and from below away from zero. 
\item  Let $E$ be a connected component of $M-C$ with the property that $\lambda|_E\equiv 0$. Then 
the volume of a curvature leaf   $L\subset E$ is uniformly bounded from below away from zero.  
\end{enumerate}
\end{lemma}
\begin{proof}
By Proposition \ref{dicho} $E$ is diffeomorphic to an interval bundle over some closed space form $P$. 
If $\lambda >0$ or $\lambda <0$ and dimension $n\geq 4$, then constant curvature metrics on $P$ are unique up to 
isometry. In particular, the volume of $L$ is a constant times $\lambda^{-\frac{n}{2}}$ and the diameter 
is also a constant times $\lambda^{-\frac{1}{2}}$. As $\lambda$ is bounded, the first claim follows. 
By Corollary \ref{sign} the only other possibility is that $\lambda\equiv 0$ on $E$.  
Moreover, if this is the case, then by Lemmas \ref{injectivity} the injectivity radius is bounded from below by $\gamma$ and hence 
the volume of $L$ is at least the volume of the  Euclidean ball of radius $\gamma$. 
\end{proof}

The next step is to construct uniform cylindrical neighborhoods around point of $M$. 
To this purpose, for every $p\in M-C$ we now consider the relative metric neighborhood $N_{\gamma}(D_{\gamma}(p))$, which is the image of the set $\{(q, \rho\xi); q\in D_{\gamma}(p), |\rho| \leq  \gamma\}\subset TM$ by 
the exponential map.  Since the exponential map is a diffeomorphism on its image, the later is a 
relative regular neighborhood with respect to its boundary. 
Let now $q\in M$ at distance $\frac{\gamma}{20}$ from $D_{\gamma}(p)$ the point which belongs to the 
orthogonal geodesic $\theta_p$ to $L_p$ issued from $p$ in the direction given by a lift $\widehat{\xi}$. 
  
\begin{lemma}\label{round}
There exists some constants $Q, R$ such that for $\gamma < R$ 
the metric disk $D^M(q,Q \gamma)$ is contained within $N_{\gamma}(D_{\gamma}(p))$. 
\end{lemma}
\begin{proof} 
We change the metric in $D^M(p, 4\gamma)$, with $\gamma$ small enough, to the flat metric such that 
the second fundamental form is bounded by $2A$, using Lemma \ref{localnormal}. 
We can also assume that the line field $\xi$ is defined on  $D^M(p, 4\gamma)$ and that in the flat 
metric the angle between $\xi|_p$ and $\xi|_y$ is smaller than $\frac{\pi}{10}$, for any $y\in D^M(p, 4\gamma)$. 
Let $p'$ be a point of the metric sphere $\partial D_{\gamma}(p)\subset L_p$ and $\theta$ be the 
orthogonal geodesic arc of length $\gamma$ issued from $p'$. 
By Lemma \ref{localnormal}  $N_{\gamma}(D_{\gamma}(p))$ is diffeomorphic to a cylinder 
$D_{\gamma}(p)\times [0,\gamma]$ whose lateral surface corresponding to 
$\partial D_{\gamma}(p)\times [0,\gamma]$ is the union of all arcs $\theta_{p'}$. Since the second fundamental form of $D_{\gamma}(p)$ is bounded, 
we can suppose from Lemma \ref{Weinstein}, by passing to a small $\gamma$ if needed, that 
$d_M(p,p')\geq B d(p,p')$. 
In the flat metric the geodesics arcs $\theta_p$ and $\theta_{p'}$ are line segments and their angle 
is smaller than $\frac{\pi}{10}$. Then elementary geometry arguments show that 
$d_M(q,\theta_{p'}) > \frac{9}{10}d_M(p,\theta_{p'})$, 
$d_M(p,\theta_{p'})\geq  \cos\left(\frac{\pi}{10}\right)d_M(p,p')$ 
and so $d_{M}(q, \theta_{p'}) >  \frac{B}{2} d(p,p')$. This implies that the distance between $q$ and the 
lateral surface of $\partial N_{\gamma}(D_{\gamma}(p))$ is at least $\frac{B}{2}\gamma$. On the other hand 
a similar argument shows that the distance between $q$ and the upper cap of the cylinder 
corresponding to $D_{\gamma}(p)\times \{\gamma\}$ is also at least $\frac{B}{2}\gamma$,  which implies our claim
for $Q=B/2$.  
\end{proof}

\subsection{Limit leaves for locally $1$-QC manifolds}

\begin{definition}
A  smooth {\em branched} hypersurface in $M$ is a codimension 1 immersion 
$f: X\to M$  with the following properties:
\begin{enumerate}
\item the source $X$ is the union of closed manifolds, namely compact manifolds with empty boundary;
\item $f$ has no triple points;
\item if $Y$ is any (topological) connected component of  the manifold $X$, then the 
restriction $f|_{Y}: Y\to M$ is an embedding;
\item double points are tangencies between images of distinct connected components of $X$. 
Specifically, if 
$(x_1,x_2)\in X\times X$ and $f(x_1)=f(x_2)$, then  there exist two distinct connected components $X_1, X_2\subset X$ 
of $X$ such that $x_1\in X_1$, $x_2\in X_2$ and  $f(X_1)$ and $f(X_2)$ are tangent at $f(x_1)=f(x_2)$.    
\end{enumerate}  
 The connected components of $X$ are called the {\em branches} at the source, their images in $M$ are the 
 branches of $f(X)$  and the set of double points in $M$ is the {\em branch} set of $f$.  

We extend this definition to  smooth {\em branched hypersurfaces with self-tangencies} in $M$ 
by allowing the restrictions $f|_{Y}: Y\to M$ to be immersions with self-tangencies instead of  
embeddings.  
\end{definition}
 
By abuse of language we will also refer to the image $f(X)$ as a branched hypersurface. With this convention 
the tangent space to a branched hypersurface is well-defined also at branch points. 

Recall that any smooth hypersurface could be defined locally as the graph of some smooth function. 
This permits to define limits of hypersurfaces as follows: 

\begin{definition}
A sequence of compact hypersurfaces $H_n\subset M$ {\em converges} to a smooth hypersurface $H_{\infty}$ if there is an open finite cover $\{U_i\}$ of $M$ by cylinders 
$U_i={\rm int}(D_i)\times (0,1)$ such that  for every $i$ and large enough $n$  the hypersurfaces $\overline{U_i}\cap H_n$ 
are  either empty or the graphs of some 
smooth functions $\psi_{i,n}:D_i\to [0,1]$ with the property that $\psi_{i,n}$ converges in the $C^k$-topology to $\psi_{i,\infty}$ 
for all $k\geq 2$.   

Further, the sequence of submanifolds $H_n$ {\em limits to} the finite union  $\cup_{i=1}^m H_{\infty,i}$ of submanifolds 
 $H_{\infty, i}$ if  $\Z_+=\cup_{i=1}^mJ_i$ such that each 
 sub-sequence $(H_{n})_{n\in J_i}$ converges to $H_{\infty,i}$. 
\end{definition}

We know that the distribution $\mathcal D$ is integrable on $M-C$. Our aim is to show that a weaker version of integrability 
holds on the closure $\overline{M-C}$. Our main result in this section is:

\begin{proposition}\label{integrability}
Let $M$ be a  locally $1$-QC manifold, which is assumed to be conformally flat when $n=3$ and let $\xi$ denote a  distinguished line field extension to  an open set containing $\overline{M-C}$.  
\begin{enumerate}
\item Then for every $q\in \overline{M-C}\cap C$  there exists a limit leaf $L_q$ for $\mathcal D$. 
\item If $\xi$ is orientable then the limit leaf $L_q$ passing through $q\in \overline{M-C}\cap C$ 
is a branched hypersurface.  Moreover, each branch of the limit leaf $L_q$  is a totally umbilical compact submanifold of constant 
intrinsic sectional curvature $\lambda$ tangent to  (the extension to $\overline{M-C}$ of) the horizontal distribution 
$\mathcal D$. 
\item When $\xi$ is non-orientable, the limit leaf  $L_q$ is a branched hypersurface 
as above,  possibly with self-tangencies. 
\item Every limit leaf $L_q$ is contained within $ \overline{M-C}\cap C$. 
\end{enumerate}
\end{proposition}

{\em Outline of the proof of Proposition \ref{integrability}}: We have to analyze the behaviour of leaves $L_{p_i}$, when $p_i$ approach 
a point $q\in C\cap \overline{M-C}$. We first consider the problem locally. Using the bounded geometry of leaves from section \ref{Uniformgeom} 
we shall consider specific neighborhoods which are products of a disk in a leaf and an interval. The size of these neighborhoods is bounded from below and chosen small enough such that leaves intersections are graphs of functions. The Arzela-Ascoli argument provides the existence  
of a lower and an upper limit for the corresponding sequence of functions, whose associated graphs form the two local branches passing through 
$q$. An uniform lower bound for the normal injectivity radius of leaves permits to show that each leaf $L_{p_i}$ intersects only once (or twice in the non-orientable case) a specific neighborhood. This permits to pass from a local limit leaf to a global 
limit leaf which should be compact since it can be approached everywhere by some sequence of compact leaves  $L_{p_i}$.

\begin{proof}
We use notation from the proof of Lemma \ref{extension-lambda}. 
Since smooth, $\alpha_V$ is bounded along with all its derivatives on any open subset $V'$ with closure $\overline{V'}$ contained in $V$. 
  
Further there exist two finite open ball coverings  $\{V_i\}$ and $\{V'_i\}$ of $\overline{M-C}$ such that 
$\overline{V'_i}\subset V_i$, for every $i$. Then $\alpha_{V'_i}$ are all bounded, so that there exists some 
constant $A$ such that for every $i$: 
\[ |\alpha_{V'_i}| \leq A.\]
We can take $A$ large enough such that $\max(|H(x)|, |N(x)|) \leq A$, for all $x\in M$. 
Since every curvature leaf $L\subset M-C$ is totally umbilical of principal curvatures $|\alpha|$, the previous inequality shows that 
the second fundamental form $II_{L}$ has  uniformly bounded norm: 
\[ \parallel II_{L} \parallel \leq A.\]

By hypothesis there exists a local conformal change of the metric to a flat metric, in particular we can assume that 
there is a flat metric on each $V_j$. Changing the original metric on $V_j'$ to the flat one amounts 
to changing the constant $A$ above.    Now, we can therefore compare tangent vectors at different points of $V_j$. 
As $\xi|_{V'_j}$ is a smooth vector field defined on $V'_j$, there exists $\delta(\varepsilon)>0$ such that 
whenever $x,y\in V'_j$ are at distance at most $\delta$ apart then  the angle between  
$\xi|_x$ and $\xi|_y$ is at most $\varepsilon$.

Our key ingredient is the following lemma which seems to be well-known to the specialists: 
\begin{lemma}\label{convergence}  
Suppose that $C\neq \emptyset$. 
Let $q\in \overline{M-C}\cap C$ and  $p_i\in M-C$ such that $\lim_{i\to \infty}p_i=q$. Then there exists an open 
neighborhood $U_q$ of $q$ in $M$ such that: 
\begin{enumerate}
\item The size of $ \overline{U_q}$ is uniformly bounded from below, namely it contains an interval bundle over a metric disk having both radius and 
height larger than some constant independent on $q$. 
\item The intersection of the curvature leaf $L_{p_i}$ with $U_q$ is connected if $\xi$ is orientable and has at most 
two connected components, if the restriction of $\xi$ is to the connected component of $M-C$ containing $p_i$ is non-orientable. 
\item Let $(L_{p_i}\cap \overline{U_q})^{\circ}$ denote the connected component of $L_{p_i}\cap \overline{U_q}$ containing $p_i$. 
Then the sequence $(L_{p_i}\cap \overline{U_q})^{\circ}$ limits to a branched hypersurface 
$(L_q\cap \overline{U_q})^{\circ}$, called a piece of the local limit leaf at $q$. Moreover,  $(L_q\cap \overline{U_q})^{\circ}$ is the union of two 
codimension 1  submanifolds of $M$ passing through $q$ which are  tangent to $\mathcal D$ which are called (local) branches. 
\item For any sequence of points $p_i\in M-C$ with $\lim p_i=q$,  
the sequence $(L_{p_i}\cap \overline{U_q})^{\circ}$ limits to $(L_q\cap \overline{U_q})^{\circ}$ or a branch of it.   
\item Each piece  $(L_q\cap \overline{U_q})^{\circ}$ of the local limit leaf at $q$ is contained in $(\overline{M-C})\cap C$. 
\item The sequence of curvature leaves $L_{p_i}\cap \overline{U_q}$ limits to a local limit leaf $L_q\cap \overline{U_q}$. 
Moreover, the local limit leaf  $L_q\cap \overline{U_q}$ is the union of at most 
two pieces of local limit leafs. 
\end{enumerate}
\end{lemma}
\begin{proof}[Proof of Lemma \ref{convergence}] 
According to Lemmas \ref{injectivity} and \ref{localnormal} there is $\gamma$ such that any leaf $L$ 
has both injectivity radius  $i_L$ and focal radius $f_{L\subset M}$ bounded from below by $\gamma$ while the normal injectivity radius 
$i_{p, \gamma}$ is bounded from below.  Now, as  $\parallel II_{L} \parallel \leq A$ and each leaf is of constant curvature $\alpha$, uniformly bounded, it follows that each the embedding  of each leaf $L\to M$ has uniformly bounded local metric distorsion. 
Change now $\gamma$ into  $\min(\gamma, i_{p,\gamma})$.  
It follows that $i_{p,\gamma}\geq\gamma$.

Consider now the image $N_{\gamma}(D_{\gamma}(p))$ of 
$\{(q, \rho\xi); q\in D_{\gamma}(p), |\rho| < \gamma\}\subset TM$ by the exponential map.  
By above the exponential map is a diffeomorphism on its image, which is a 
relative regular neighborhood with respect to its boundary. 
Moreover, as $D_{\gamma}(p)$ is diffeomorphic to a $(n-1)$-dimensional disk, 
$\xi|_{D_{\gamma}(p)}$ is a trivial line bundle and hence  $N_{\gamma}(D_{\gamma}(p))$ is diffeomorphic to 
$D_{\gamma}(p)\times [-\gamma, \gamma]$.

We can assume that  $q\in \overline{M-C}\cap C\cap V'_j$, $p_i\in (M-C)\cap V'_j$ and  
 $d(p_i,q) < \gamma/20$, for all $i$, so that $q\in {\rm int}(N_{\gamma}(D_{\gamma}(p_1))$.  
Consider now the cylindrical neighborhood  $\overline{U_q}= N_{\gamma}(D_{\gamma}(p_1))$ of $p_1$ which is diffeomorphic to  
$D_{\gamma}(p_1)\times [-\gamma, \gamma]$. 
Note that there is a projection on the first factor $\pi: \overline{U_q}\to D_{\gamma}(p_1)\subset L_{p_1}$. Specifically, 
$\pi(z)\in D_{\gamma}(p_1)$ is the closest point to $z$. By our choice of $\gamma$, the map $\pi$ is well-defined (see Lemma \ref{localnormal}). 
Moreover, we can suppose that $q, p_i\in {\rm int}(\overline{U_q})$, for all $i$ and that $\overline{U_q}\subset V'_j$. 

We will suppose henceforth that $\xi$ is defined on a $\gamma$-metric neighborhood $\Omega$ of $\overline{M-C}$, and in 
particular on every  $\overline{U_q}$, for $q\in \overline{M-C}\cap C$.

By taking a smaller $\gamma$ if needed, we can assume that  
$\overline{U_q}$ is included in a metric ball of radius $\delta(\pi/10)$, namely that 
$\xi|_x$ has only a small  angle variation for all $x\in \overline{U_q}$ and moreover 
$d(p_i,p_1)<\gamma/10$. 

Now, $U_q$ satisfies the requirements of the first claim in the Lemma, by construction.

From Proposition \ref{dicho} every connected component $E$ of $M-C$ is an interval bundle over a compact space form $P$, as $\widehat{E}$ is a cylinder. 
Therefore the complement of the lift of every curvature leaf in $\widehat{E}$ has two connected components.  
Then (\cite{Hae}, Lemme 2, p.385)  shows that under these conditions any transversal arc will cut a leaf of $\widehat{E}$ in at most one point. 
On the other hand, note that a geodesic arc issued from a point of $D_{\gamma}(p_1)$ and orthogonal to $L_{p_1}$ should be 
transversal to all leaves that it intersects in $\overline{U_q}$, because of the small  angle variation assumption. 
If $\xi$ is orientable, then the intersection of curvature leaves $L_{p_i}$ with $\overline{U_q}$ to should be connected, 
as otherwise orthogonal geodesics will intersect twice the same leaf. If $\xi$ is non-orientable, the same argument works for the pull-back 
of  $\overline{U_q}$ to $\widehat{E}$ should be connected and hence the intersection of curvature leaves $L_{p_i}$ with $\overline{U_q}$ 
have at most two connected components. This proves the second item of our Lemma.

We further claim that  the connected  component of $(L_{p_i}\cap \overline{U_q})^{\circ}$ is 
the graph of a smooth function $y_i:D_{\gamma}(p_1)\to [-\gamma,\gamma]$. 
This is clear for $(L_{p_1}\cap \overline{U_q})^{\circ}$ and hence for all leaves $(L_{p_i}\cap \overline{U_q})^{\circ}$ 
which are close enough to it. We have to prove that this holds for 
all points $p_i$ with $d(p_i,p_1) <\gamma/10$. 

A compact hypersurface $L\cap\overline{U_q}$ is the graph of some smooth function 
$D_{\gamma}(p_1)\to [-\gamma,\gamma]$ if  and only if the 
projection $\pi:\overline{U_q}\to D_{\gamma}(p_1)$ restricts to a diffeomorphism  
$\pi|_{L}:L \cap \overline{U_q}\to D_{\gamma}(p_1)$.
The key point is to show that $\pi|_{L}$ is a bijection, as the smoothness of its inverse will follow 
from the smoothness of $L\cap\overline{U_q}$.  

If $\pi|_{L_{p_i}}$ were not surjective, then there are two possibilities: 
\begin{itemize}
\item[-] 
either $(L_{p_i}\cap \overline{U_q})^{\circ}$ would intersect the top or the base of the cylinder $\overline{U_q}$, namely the
image of $D_{\gamma}(p_1)\times\{-\gamma,\gamma\}$;
\item[-] or else a boundary point of the image $\pi|_{L_{p_i}}((L_{p_i}\cap \overline{U_q})^{\circ})$  would be a 
critical value of $\pi|_{L_{p_i}}$ lying in ${\rm int}(D_{\gamma}(p_1))$. 
\end{itemize}

In the first case we can find a curve  contained in $(L_{p_i}\cap \overline{U_q})^{\circ}$ joining $p_i$ to some point $z\in D_{\gamma}(p_1)\times\{-\gamma,\gamma\}$. By the mean value theorem there is some point $r$ on this curve where the angle between $\xi|_{p_1}$ and $\xi|_r$ is at least $\arcsin\left((\sqrt{199}-1)/20\right)> \pi/10$, contradicting our hypothesis on the small variation of the angle in the (flat) chart $U_q$. 

In the second case $(L_{p_i}\cap \overline{U_q})^{\circ}$ only intersects non-trivially the lateral surface 
$\partial \overline{U_q}-D_{\gamma}(p_1)\times\{-\gamma,\gamma\}$ of the  boundary 
$\partial \overline{U_q}$.   
Now,  if $x$ is a critical point of $\pi|_{L_{p_i}}$ then $\xi|_x$ should be parallel to  the tangent space 
$T_{\pi(x)}D_{\gamma}(p_1)$ and hence orthogonal to $\xi|_{\pi(x)}$, contradicting our assumptions on 
the small variation of the angle. 
We conclude that $\pi|_{L_{p_i}}$ is surjective. 

Next,  if $\pi|_{L_{p_i}}$ were not injective when restricted to a connected component of  $(L_{p_i}\cap \overline{U_q})^{\circ}$,   
then there would exist a smooth path in $(L_{p_i}\cap \overline{U})^{\circ}$ joining two points with the same image under $\pi|_{L_{p_i}}$. The restriction of $\pi|_{L_{p_i}}$ to this path must then have a critical point and, as above, this would provide a 
point $x$ of $(L_{p_i}\cap \overline{U_q})^{\circ}$ for which $\xi|_x$ is orthogonal to $\xi|_{\pi(x)}$. This would contradict 
our assumption on the small variation of the angle. Our claim follows. 
 
Observe that $y_i:D_{\gamma}(p_1)\to \R$ are smooth  bounded functions, as $ |y_i|\leq \gamma$. 
Their derivatives are also bounded as $\xi|_{x}$ is close 
to $\xi|_{q}$, for any $x\in \overline{U_q}$, by the small angle variation assumption. 
Specifically,  assume that we are in a flat chart containing $\overline{U_q}$ obtained by a conformal change of metric. 
We pick up coordinates $\{x_1,x_2, \ldots,x_{n-1},y\}$, the first $n-1$ coordinates corresponding to  $D_{\gamma}(p_1)$. 
If the point $x\in \overline{U_q}$ belongs to the image of $y_i$, then we have: 
\[ \sin^2\measuredangle(\xi|_{x},\xi|_{q})=\frac{\sum_{j=1}^{n-1}\left| \frac{\partial y_i}{\partial x_j}\right|^2}
{1+\sum_{j=1}^{n-1}\left| \frac{\partial y_i}{\partial x_j}\right|^2} <\sin^2\frac{\pi}{10} < 0.1\]
from which we derive the following upper bound for the first derivatives of $y_i$:  
\[ \sum_{j=1}^{n-1}\left| \frac{\partial y_i}{\partial x_j}\right|^2 < \frac{\sin^2\frac{\pi}{10}}{1+ \sin^2\frac{\pi}{10}}<0.1\]
In the same flat chart the second fundamental form of the hypersurface given by the image of $y_i$ has the matrix:  
\[ II_{jk}= \frac{1}{1+\sum_{j=1}^{n-1}\left| \frac{\partial y_i}{\partial x_j}\right|^2}\cdot \frac{\partial^2 y_i}{\partial x_j \partial x_k}\]
Since the norm of the second fundamental form of $L_{p_i}\cap \overline{U_q}$ is uniformly bounded by $A$ we derive the following bound for the second derivatives of $y_i$: 
\[ \sum_{j,k=1}^{n-1}\left| \frac{\partial^2 y_i}{\partial x_j\partial x_k}\right|^2 < 1.1\cdot A\]
Alternatively, recall that $II$ is diagonal in the 1-QC metric, because leaves are totally umbilical  and the principal curvatures 
equal $\alpha$. The entries of $II$ in the flat chart can be expressed  in terms of the entries of $II$ in the 1-QC metric (namely $0$ and $\alpha$) and the real function specifying the conformal change in the metric.  Since all derivatives of $\alpha$ are bounded in $\overline{U_q}$  higher derivatives of $y_i$ are also bounded.

By the Arzela-Ascoli theorem the sequence $y_i$  has a sub-sequence which converges to a smooth function $y_{\infty}: D_{\gamma}(p_1)\to [-\gamma,\gamma]$ in the $C^{k}$ topology, for every $k\geq 2$. 

Now, the leaves $L_{p_i}$ are pairwise disjoint and hence 
there is a total order among the  functions $y_i$. Therefore, there are only two possible limits of sub-sequences of $y_i$, namely 
$y_{\infty}^+=\lim\sup y_i$ and $y_{\infty}^-=\lim\inf y_i$. 
We say that the sequence $(L_{p_i}\cap \overline{U_q})^{\circ}$ is increasing or decreasing if the sequence of functions $y_i$ is increasing or decreasing, respectively. 

Denote by $(L_q^{\pm}\cap \overline{U_q})^{\circ}$ the graph of the limit function $y^{\pm}_{\infty}$ and call them the {\em local branches} of the local limit leaf. We say that the sequence $L_{p_i}^{\circ}\cap \overline{U_q}$ is {\em below} $q$ if 
the sequence $y_i$ is increasing and {\em above} $q$ if the sequence $y_i$ is decreasing. 
Each local  branch is a smooth  totally umbilical hypersurface of principal curvature $\alpha(q)$. 
Since $\xi$ is everywhere orthogonal at $L_{p_i}$, by passing to the limit we find that each $(L_q^{\pm}\cap U_q)^{\circ}$ is a codimension 1 submanifold which is tangent to the distribution $\mathcal D$ at each point $x\in (L_q^{\pm}\cap \overline{U_q})^{\circ}$.
  
Set $(L_q\cap \overline{U_q})^{\circ}=(L_q^{+}\cap \overline{U_q})^{\circ}\cup (L_q^{-}\cap \overline{U_q})^{\circ}$ for the union of the two local branches.  
The local branch set is the set of tangency points between 
the two local branches $(L_q^{\pm}\cap \overline{U_q})^{\circ}$. The branch set could be all of $(L_q\cap \overline{U_q})^{\circ}$, in which case the two local branches coincide, or just one point. 

Eventually observe that $(L_{p_i}\cap \overline{U_q})^{\circ}$ limits to $(L_q\cap \overline{U_q})^{\circ}$ since  the later is 
the union of the  graphs of the functions $\limsup y_i$ and $\liminf y_i$.

We obtained above a piece $(L_q\cap \overline{U_q})^{\circ}$ of the limit leaf at $q$. We will need now to show that 
the piece $(L_q\cap \overline{U_q})^{\circ}$ does not depend on the choice of the sequence $p_i\in M-C$ with $\lim p_i=q$. 
 It is enough to consider the case when the sequences $p_i$ and $p_i'$ are both increasing
 with respect to the order defined in the proof of Lemma \ref{convergence}.  As their limit is the same the sequences of functions $y_i$ and $y_i'$ 
 associated to the sequences $p_i$ and $p_i'$, respectively, are comparable, namely for every $i_1$ there exists some $i_2$, $j_1,j_2$ such that 
 $y_{i_1} \leq y'_{j_1} \leq y_{i_2}\leq y'_{j_2}$. Then $\lim y_i=\lim y'_i$, which implies our claim and hence item 4 of our Lemma. 
 
 Further,  if $q'\in M-C \cap (L_q\cap \overline{U_q})^{\circ}$, then $(L_q\cap \overline{U_q})^{\circ}=(L_{q'}\cap \overline{U_q})^{\circ}$, so that 
 $q\in L_{q'}$. But the curvature leaf $L_q'$ is complete with its intrinsic metric, according to Reckziegel's Theorem  \ref{Reckziegel} (see Lemma \ref{Reck}) and $L_{q'}\subset M-C$. This leads to a contradiction. Thus $(L_q\cap \overline{U_q})^{\circ}\subset C$.   On the other hand  every point of  $(L_q\cap \overline{U_q})^{\circ}$ is a limit point of  $(L_{p_i}\cap \overline{U_q})^{\circ}$, so that $(L_q\cap \overline{U_q})^{\circ}\subset \overline{M-C}$.

 We can define now the local limit leaf  $L_q\cap \overline{U_q}$, in the case when $L_{p_i}\cap \overline{U_q}$ is not connected, for large enough $i$. 
 It suffices to consider the set $p'_i\in M-C\cap U_q$ with the property that 
 $L_{p_i}\cap \overline{U_q}=(L_{p_i}\cap \overline{U_q})^{\circ} \sqcup (L_{p'_i}\cap \overline{U_q})^{\circ}$ and 
 $\lim p'_i=q'\in C$. Then we set  $L_q\cap \overline{U_q}= (L_q\cap \overline{U_q})^{\circ} \cup (L_{q'}\cap \overline{U_q})^{\circ}$. 
 It follows from above  that $L_{p_i}\cap \overline{U_q}$ limits to $L_q\cap \overline{U_q}$. 
 Such a local limit leaf $L_q\cap \overline{U_q}$ is obviously unique and independent on the choice of the sequence $p_i$. 
 Moreover, by uniqueness if $q'\in C$ is such that $q'$ is a limit point of $L_{p_i}\cap U_q$, then 
 $(L_{q'}\cap  U_{q'})\cap U_q=(L_{q}\cap  U_q)\cap U_{q'}$.  
\end{proof}

\begin{lemma}\label{normalinjectivity}
Assume that $M$ is locally $1$-QC. Let $E$ be a component of $M-C$ for which the restriction 
$\xi|_{E}$ is orientable. Then the normal injectivity radius of every curvature leaf $L\subset E$ is bounded below 
by some constant $\gamma$ depending on $A$ and the bounds on the sectional curvature. 
\end{lemma}
\begin{proof}  
Consider a normal orthogeodesic joining $p,q\in L_p$. We consider the metric neighborhood $N_{\gamma}(D_\gamma(p))$ of $p$. 
Then the argument used above to prove Lemma \ref{convergence}.(2)  shows that  $L_p\cap N_{\gamma}(D_\gamma(p))$ is connected. 
In particular $d_M(p,q) \geq \gamma$. This provides an uniform lower bounds for the length of an orthogeodesic. Using 
(\cite{Herm}, Thm.4.2) and the lower bound for the focal radius provided by Lemma \ref{injectivity} we obtain the claim. 
\end{proof}

We now define an equivalence relation, called {\em confluence}, 
for branches of local limit leaves as follows. 
First, if $U_q\cap U_{q'}\neq \emptyset$
we  say that $L^+_{q'}\cap  U_{q'}$ and $L^+_{q}\cap  U_{q}$  {\em have a common extension} if 
 $(L^+_{q}\cap  U_{q})\cap U_{q'}=(L^+_{q'}\cap  U_{q'})\cap U_{q}$. 
Then  $L^+_{q'}\cap  U_{q'}$ and $L^+_{q}\cap  U_{q}$ are 
called {\em confluent} if there exists a  finite sequence of branches of local limit leaves 
starting at $L^+_{q}\cap  U_{q}$ and ending with $L^+_{q'}\cap  U_{q'}$, where 
consecutive terms have a common extension. 

We now define {\em the branch  $L_q^+$ of the limit leaf passing through $q$} as the union of all branches of local limit leaves confluent with $L^+_{q}\cap  U_{q}$.    
By definition $L_q^+$ is an immersed smooth codimension one submanifold of $M$ with possible self-tangencies, 
which is contained within $\overline{M-C}\cap C$. 
Eventually, we define {\em the limit leaf} $L_q$ as the union of all branches of 
all local limit leaves confluent to some branch of $L_q\cap U_q$.

\begin{lemma}\label{compactlimitleaf}
Each branch $L^+_q$ of a limit leaf is compact. 
\end{lemma}
\begin{proof}
We want to show that limit leaves can be approached globally by leaves. 
The following property of the confluence relation will be the key ingredient of the proof. 
We suppose that: 
\begin{enumerate}
\item $L_{p_i}\cap U_q$ limits to $L_q^+\cap U_q$, where $p_i$ are below $q$,  
and $L_{p'_i}\cap U_{q'}$ limits to $L_{q'}^+\cap U_{q'}$; 
\item  $U_q\cap U_{q'}\neq \emptyset$; 
\item  $L_q^+\cap U_q$ and   
and $L_{q'}^+\cap U_{q'}$ have a common extension. 
\end{enumerate}
Then, we claim that there is a sub-sequence $L_{p_i''}$ of the sequence 
of leaves $L_{p_i}, L_{p'_i}$ with the property that    
$L_{p_i''}\cap (U_q\cup U_{q'})$ limits to $(L^+_q\cap U_q) \cup (L^+_{q'}\cap U_{q'})$.

Observe that $p_i$ are below $q$ and $p_i'$ are below $q'$. 
Our claim is a consequence of an analogous result on sequences of real functions $y_i$, 
having the property that $y_{i+1}-y_i$  have constant sign. 
Specifically, assume we have two increasing 
sequences of real functions $y_i,y'_i:U\cup V\to \R$ such that their restrictions to the 
open relatively compact sets $U$ and $V$, 
with $U\cap V\neq\emptyset$, converge to the restrictions of a function 
$z:U\cup V\to \R$ defined on $U\cup V$, namely 
${y_i}|_U\to z|_U$ and ${y'_i}|_V\to z|_V$. Moreover, suppose that  
$y_i-y'_j$ have constant sign, for all $i,j$. Then there exists a sub-sequence 
$y_i''$ of the sequence obtained by putting together $y_i, y'_i$ 
such that ${y_i''}|_{U\cup V}\to z|_{U\cup V}$. In fact it suffices to take 
$y_i''=\max(y_i,y'_i)$, which coincides with either $y_i$ or $y'_i$ on $U\cup V$. 
To allow a description of general hypersurfaces which might not be globally graphs we should   
extend this result to multi-valued functions. Consider now that 
${y_i}|_U$, ${y'_i}|_V$, $z|_U$ and $z|_V$ are now usual functions, but ${y_i}|_V$ and ${y'_i}|_U$  
are only required to be multi-valued functions whose graphs correspond to some smooth hypersurface. 
The argument above then shows that the graphs of the  possibly multi-valued functions 
$\max(y_i,y'_i)|_{U\cup V}$ converge to the graph of $z|_{U\cup V}$ in the ${\mathcal C}^0$-topology:  
if ${y_i}|_{U\cap V} > {y'_i}|_{U\cap V}$, then $\max(y_i,y'_i)=y_i$ and   
${y'_i}|_V < {y_i}|_V < z_V$. Thus the limit of any sub-sequence ${y_i}|_V$ as above is $z|_V$.  
However, if we assume that all derivatives of ${y'_i}|_V$ and  ${y_i}|_V$ are continuous  
then  ${y_i}|_V$ should be also usual functions, as otherwise their first derivative would be infinite 
in some direction. In our case of branches of limit leaves this assumption is satisfied since 
all leaves intersecting a open distinguished neighborhood $U_q$ have almost horizontal tangent space.

By induction on the number of local branches 
we obtain that, up to extracting a sub-sequence (and possibly changing the initial 
sequence $p_i$ by another sequence with the same properties) $L_{p_i}$ limits from below to $L_q^+$.

Each branch  $L_q^+$ is complete with respect to the induced metric. To see that, it suffices 
to note that the metric disk $D_{Q\gamma}(q)$ within the branch 
$L_q^+$ is contained within $L^+_{q}\cap  U_{q}$, for every $q$, 
according to Lemma \ref{round}. 
To prove that  $L_q^+$ is compact, it is therefore enough to show that it is a proper leaf. 

We assume that in our construction above the height $\gamma$ of each $U_q$ 
is smaller than $\frac{i_{p,\gamma}}{3}$. If $U_q=N_{\gamma}(D_{\gamma}(p_1))$  we denote by $U'_q$ the corresponding metric neighborhood of 
double height, namely $N_{2\gamma}(D_{\gamma}(p_1)$. 
Since $L^+_q$ is covered by some union $\cup U_{q'}$,  its closure  $\overline{L^+_q}$ in $M$ is 
covered by the corresponding union $\cup U'_{q'}$. By compactness we can extract a finite covering $U'_{q_i}$ of  $\overline{L^+_q}$. 
If $L^+_q$ were non-compact, then there would exist some neighborhood 
$U'_{q'}=N_{2\gamma}(D_{\gamma}(p'_1))$ which intersects infinitely many times $L^+_q$. 
But  $U'_{q'}$ has the shape of a cylinder, so that there are geodesics issued from $D_{\gamma}(p'_1)\subset L_{p'_1}$ orthogonal to $L_{p'_1}$  
which  intersect infinitely many times $L^+_q$. 
Recall from above that  every point  of $L^+_q$ can be approached 
arbitrarily closely by the leaves $L_{p_i}$. It follows that there exist leaves $L_{p_i}$ which intersect more than 100 times some 
orthogonal geodesic within $U'_{q'}$. 
This contradicts the fact that any transverse arc in $M-C$ should intersect at most twice every leaf of our foliation within a small neighborhood 
(see \cite{Hae}, Lemme 2, p.385 and Lemma \ref{convergence}.(2) above). 
\end{proof}

From above $L_q$ is the image of a codimension 1 immersion of some union $X$ of closed manifolds, 
with only tangency or self-tangency points. The norm of the second fundamental form of every branch is also bounded by $A$, 
as this bound is valid for any local branch $L_q^{\pm}\cap \overline{U_q}$.

When $\xi$ is orientable, then every branch is embedded, as the 
normal injectivity radius is bounded from below by Lemma \ref{normalinjectivity}  and hence there are not 
self-tangency points. Thus $L_q$ is a branched hypersurfaces. 
Further $L_q\subset \overline{M-C}\cap C$ and each branch is a totally umbilical hypersurface with constant sectional 
curvature $\lambda(q)$. 
When $\xi$ is non-orientable, then the lift $L_q$ to the double cover $\widehat{\Omega}$ is a branched  hypersurface, as claimed.   
 This proves Proposition \ref{integrability}. 
\end{proof}

An immediate corollary of the proof is the following:
\begin{lemma}\label{Hausdorff}
Let $q\in \overline{M-C}\cap C$ and  $p_i\in M-C$ such that $\lim_{i\to \infty}p_i=q$. Then, up to passing to a sub-sequence, 
the limit of $L_{p_i}$  in the Hausdorff topology is some branch of the limit leaf $L_q$. More specifically, if 
$p_i$ are below $q$ then the limit of $L_{p_i}$  is the branch $L_q^+$. 
\end{lemma}
Recall that the {\em Hausdorff distance} between the  
non-empty compact subsets $X,Y\subset M$ is 
given by $\max(\sup_{x\in X} \inf_{y\in Y} d(x,y), \sup_{y\in Y}\inf_{x\in X} d(x,y))$, where 
$d$ is the distance in $M$. It is well-known that the space of compact subsets of $M$ equipped with the Hausdorff metric 
is complete.

\subsection{Closure of non-compact globally $1$-QC components}  
We denote by ${\rm Fr}(E)={\rm Fr}(\overline{E})=\overline{E}\setminus E$ the {\em frontier} of a subset $E\subset M$. 
An {\em end} of a topological space $X=\cup_{n=1}^{\infty}K_n$ which is the ascending union of compact subspaces $K_n$ 
is an infinite sequence $V_1\supset V_2 \supset V_3 \supset \cdots$ with $\cap_n V_n=\emptyset$, where 
each $V_n$ is a connected component of $X-K_n$. When $X$ is a path-connected CW-complex ends correspond to 
equivalence classes of proper continuous maps $\R_+\to M$, 
up to proper homotopy of their restrictions to the subset of positive integers. 
For example $\R$ has two ends, $\R^2$ has one end while 
the infinite complete binary tree has uncountably many ends. 
 
Proposition \ref{integrability} shows that limit leaves are branched surfaces hence union of embedded (or immersed) 
compact hypersurfaces. Here we will obtain a more precise statement by showing that the number of branches 
in the frontier of a connected component $E$ of $M-C$ is either one or two. 
 
\begin{proposition}\label{bdy}
Let $M$ be a locally $1$-QC manifold and $E$ a non-compact connected component of $M-C$. Assume that $\xi|_E$ is orientable. 
Then  $\overline{E}$ is  either diffeomorphic to a space form cylinder or else homeomorphic to the result of identifying 
two subsets of the two boundary components of a space form cylinder.    
\end{proposition}
\begin{proof}
By Proposition \ref{dicho} $E$ is diffeomorphic to a cylinder $P\times \R$ over some closed 
space form $P$. 
It makes sense then to say that a sequence of curvature leaves which goes to infinity  
belongs to one or another of the ends of $\R$.

Let $q\in {\rm Fr}(E)$. By Proposition \ref{integrability} there exists a limit leaf $L_q$ passing through $q$, which is a compact branched hypersurface everywhere tangent to $\mathcal D$. Now $L_q\cap \overline{E}\subset {\rm Fr}(E)$ as $L_q\cap (M-C)=\emptyset$. 
Note that an arbitrarily small backward trajectory of $\widehat{\xi}$ issued from $q$ 
cannot be contained in $L_q$, as $\widehat{\xi}$ is orthogonal to $L_q$.  Then, according to Lemma \ref{Hausdorff}   any sequence of curvature leaves belonging to a 
given end of $E$ converges to a branch of $L_q\cap \overline{E}$. 
Thus  $L_q$ has at most two branches, corresponding to the two ends of $E$. 
Therefore ${\rm Fr}(\overline{E})$ is a compact branched hypersurface with at most two branches. 

If there is only one branch for each end, then $\overline{E}$ is a compact manifold endowed with a foliation with compact leaves and 
hence diffeomorphic to a cylinder. Otherwise, $\overline{E}$ is homeomorphic to the 
result of gluing together the two ends of the cylinder along 
the branch locus, namely the quotient of $P\times [0,1]$ by the equivalence relation 
$(x,0)\sim (x,1)$, for every branch point $x$. 
\end{proof}

  

\begin{proposition}\label{Ibundles}
If $E$ is a non-compact connected component of $M-C$ and $\xi|_E$ is non-orientable,  
then $\overline{E}$ is diffeomorphic to the image of 
an interval bundle with one boundary component by an immersion with self-tangencies along the 
boundary. 
\end{proposition}
\begin{proof} 
Let $\xi$ be defined on the open set  containing an open subset  $\Omega\supset \overline{M-C}$ with compact closure. 
Let then  $\overline{\widehat{E}}$ be the closure of  $\widehat{E}$ within $\widehat{\Omega}$.  
By Proposition \ref{bdy}  $\overline{\widehat{E}}$ is the image of a cylinder 
$\widehat{P}\times [0,1]$ by an immersion with possible self-tangencies between 
the two boundary components, where $\widehat{P}$ is a closed manifold. 
This identification is canonical, as trajectories of $\widehat{\xi}$ are horizontal, i.e. of the form $\{p\}\times [0,1]$. The action of the deck group $\mathbb G=\Z/2\Z$ of the cover 
 $\overline{\widehat{E}}\to \overline{E}$ is covered by 
a diffeomorphism $\varphi:\widehat{P}\times [0,1]\to \widehat{P}\times [0,1]$. 
Then $\varphi$ preserves globally the set of boundary points. 
To describe $\overline{E}$ it suffices to understand the action of $\varphi$. 

First, if $\varphi(\widehat{P}\times \{i\})=\widehat{P}\times \{i\}$, for $i\in \{0,1\}$, 
then ${\rm Fr}(E)$ consists of the union of the images of  
$(\widehat{P}\times \{i\})/\mathbb G$, for $i\in \{0,1\}$.   
Each oriented trajectory of $\widehat{\xi}$ descends to a trajectory of the line field $\xi$.
However, we can orient each trajectory  of $\xi$, by declaring that it is 
issued from  $(\widehat{P}\times \{0\})/\mathbb G$ and arrives to  
$(\widehat{P}\times \{1\})/\mathbb G$. 
This provides a lift of $\xi$ to a vector field on $\overline{E}$, 
contradicting its non-orientability. It follows that $\varphi$ must exchange the two boundary components of $\widehat{P}\times [0,1]$. 

\begin{lemma}\label{invol}
Assume that $\varphi$ exchanges the two components $\widehat{P}\times \{i\}$, for $i\in \{0,1\}$.  
Then $\varphi$ induces 
a free action of $\mathbb G$ on $\widehat{P}$. 
\end{lemma}
\begin{proof}
Let $p_E:\overline{\widehat{E}}\to \overline{E}$ denote the projection. 
We can write $\varphi(x,0)=(\phi(x),1)$, for any $x\in \widehat{P}$, where $\phi:\widehat{P}\to \widehat{P}$ is a diffeomorphism. 
As $\xi$ is locally orientable, we can consider the trajectory $\gamma$ of $\xi$ in $\overline{E}$ issued from the point 
$p_E((x,0))=p_E((\phi(x),1))$.  Then $p_E^{-1}(\gamma)$ is the union of two arcs, say $\widehat{\gamma}_{(x,0)}$ and 
 $\widehat{\gamma}_{(\phi(x),1)}$ which are respectively issued from $(x,0)$ and $(\phi(x),1)$. 
However, the pull-backs of $\xi$-trajectories are now trajectories of $\widehat{\xi}$, in particular they are 
horizontal, i.e. of the form $\{q\}\times [0,1]$. 
Now, the maximal extensions of these two $\widehat{\xi}$ trajectories are the 
arcs $\{x\}\times [0,1]$ and $\{\phi(x)\}\times [0,1]$. 

If $x=\phi(x)$, then the restriction $p_E: p_E^{-1}(\gamma)=\{x\}\times [0,1]\to \gamma$ 
cannot be a 2-fold cover,  which is a contradiction. Thus $\phi(x)\neq x$, for any $x$ and 
$p_E^{-1}(\gamma)=\{x\}\times [0,1]\sqcup \{\phi(x)\}\times [0,1]$. This implies that $\gamma$ intersects again 
${\rm Fr}(E)$, in $p_E((\phi(x),0))$. 
If we travel backward along $\gamma$ starting from $p_E((\phi(x),0))$ we have to end at  
$p_E((\phi,0))$; we derive that $\varphi(\phi(x),0))=(x,1)$, and thus $\phi(\phi(x))=x$. 
Moreover, as $\widehat{\xi}$ is a unit vector field we have more generally that $p_E((x,t))=p_E((\phi(x),1-t))$. 

This shows that $\phi:\widehat{P}\to\widehat{P}$ defines a fixed point free involution. 
\end{proof}

Set $P=\widehat{P}/\mathbb G$, which is a closed manifold by Lemma \ref{invol} and let 
$p_P:\widehat{P}\to P$ be the quotient map. 

Given a class $\alpha\in H^1(P,\Z/2\Z)$ we have associated an interval bundle  $P\times_{\alpha} [0,1]$
which is a fibration over $P$ by intervals,  whose monodromy homomorphism  $\pi_1(P)\to \Z/2\Z$ (obtained after  
the identification $\pi_0({\rm Homeo}([0,1]))=\{\pm 1\}$) is given by $\alpha$. 
 
\begin{lemma}
There is a homeomorphism  between $\overline{E}$ and the image of 
$P\times_{w_1(\xi|_E)}[0,1]$ by an immersion with possible self-tangencies on the boundary. 
\end{lemma}
\begin{proof}
We have an exact sequence 
\[ 1\to \pi_1(\widehat{P})\to \pi_1(P)\stackrel{\alpha}{\to} \Z/2\Z\to 1\]
and an element  $j\in \pi_1(P)$ with $\alpha(j)\neq 1$ is  represented by the projection of a loop in $\widehat{P}$ 
joining $x$ to $\phi(x)$. 

The projection map $\widehat{P}\times [0,1]\to P$  sending $(x,t)$ into $p_P(x)$ factors through a map 
 $\left(\widehat{P}\times [0,1]/_{(x,t)\sim (\varphi(x),1-t))}\right)\to P$. The latter induces 
a well-defined map $\overline{E}\to \overline{P}$, where $\overline{P}$ denotes the image of 
$P$ by the immersion in $M$. The preimage of a point $p_P(x)\in P$ by the 
induced map $\overline{E}\to \overline{P}$ is the maximal trajectory  arc 
$\gamma_x\subset \overline{E}$ joining  the points 
$x$ and $\phi(x)$ from ${\rm Fr}(E)$.  It is immediate that the map $\overline{E}\to \overline{P}$ 
is a fibration with fiber $[0,1]$, which is covered by an interval 
fiber bundle $\left(\widehat{P}\times [0,1]/_{(x,t)\sim (\varphi(x),1-t))}\right)\to P$. 
The  monodromy action of $j$ sends the fiber $\gamma_x$ into itself by exchanging its endpoints, while elements from $\pi_1(\widehat{P})$ act as identity. It follows that $\overline{E}\to \overline{P}$ is the twisted product $\overline{P}\times_{\alpha} [0,1]$, image by an immersion (and its equivariant lift) of the twisted product 
$P\times_{\alpha} [0,1]$, where $\alpha$ is a class in $H^1(P,\Z/2\Z)$.

On the other hand $\overline{\widehat{E}}\to\overline{E}$ is a 2-fold cover of class $w_1(\xi)\in H^1(\overline{E},\Z/2\Z)$, by hypothesis. 
This cover is the image of an immersion of 
the 2-fold cover $\widehat{P}\times [0,1]\to P\times_{\alpha} [0,1]$. Therefore its class in 
$H^1(P\times_{\alpha} [0,1])$ is determined by the exact sequence: 
\[ 1\to \pi_1(\widehat{P}\times [0,1])\to \pi_1(P\times_{\alpha} [0,1]){\to} \Z/2\Z\to 1\]
and thus it can be identified with $\alpha$, so that $\alpha$ is the pull-back  
of $w_1(\xi|_{\overline{E}})$ to $H^1(P,\Z/2\Z)$. Recall that $E$ is diffeomorphic an open interval bundle over $P$. Thus this class can be identified  geometrically with $w_1(\xi|_{E})\in H^1(P,\Z/2\Z)$. 
\end{proof} 
This settles Proposition \ref{Ibundles}. 
\end{proof}

\subsection{Limit leaves subspace}  
The {\em limit leaves subspace} of a $1$-QC manifold is 
the compact set $\mathcal L={\rm Fr}(M-C)=\overline{M-C}\cap C$.

Components $E$ of $M-C$ which are not cylinders will be called {\em topologically non-trivial}. 
\begin{lemma}
There are only finitely many topologically non-trivial components $E$ of $M-C$. 
\end{lemma}
\begin{proof}
Let $E$ be a topologically non-trivial component of $M-C$. 
If $\overline{E}$ is a cylinder with its ends glued along a non-empty subset of the boundary,  
then the class of a loop which follows the vertical line 
contributes a non-trivial factor $\Z$ to $H_1(M)$. If $E$ is a non-trivially twisted cylinder 
then it contributes a non-trivial factor $\Z/2\Z$ to $H_1(M)$. Since $M$ is compact $H_1(M)$ is 
of finite type and hence we cannot have but finitely many such components. 
\end{proof}

Consider a maximal set  $B$ of cylindrical components $E_i=P_i\times \R$ of $M-C$ such that 
$M\setminus \cup_i E_i$ is connected. Then each $E_i$ contributes a non-trivial factor 
$\Z$ in $H^1(M)$ which is of finite type and thus $B$ is finite. If $E$ is another 
cylindrical component $E=P\times \R$, then cutting $M\setminus \cup_i E_i$ along $P$ will disconnect it. 
In particular the class of $P$ is a sum of several classes among the $P_i$ in $H_{n-1}(M)$, or by 
duality in $H^1(M)$. Thus, the set of homology classes of cores $P$ of non-essential 
cylindrical components is finite. We pick up a set $\mathcal B$ of such representatives.   
It follows that for any cylindrical component 
$E=P\times \R$ in $M$, either $P$ bounds or there exists some $Q\in \mathcal B$ 
which is homologous to $P$ and hence $P\cup Q$ bounds a codimension zero submanifold $Z$. 

\begin{lemma}
Every set of disjoint codimension zero submanifolds $Z$ of $M$  with the property that 
$\partial Z$ is a  connected curvature leaf should be finite. 
\end{lemma} 
\begin{proof}
In the proof of Proposition \ref{integrability} we noticed that there exists some $\gamma >0$ 
such that the $\gamma$-neighborhood $N_{\gamma}(L)$ is diffeomorphic to a twisted product for any 
leaf $L$. 

Consider now a leaf $L$ bounding a submanifold $Z$ and a point $p\in Z$ at distance $\gamma/2$ from $L$.
Such a point exists as  $N_{\gamma}(L)$ is embedded in $M$. 
Then the metric ball of radius $\gamma/2$ with center $p$ is diffeomorphic to a ball and contained in 
$Z$. By the Rauch comparison theorem there exists some constant $v$ depending on $M$ such that 
this metric ball has volume at least $v$ and hence the volume of $Z$ is bounded below by $v$.  

Thus there are only finitely many disjoint submanifolds $Z$ with boundary a leaf.  
\end{proof}

We add to the set $\mathcal B$ a maximal set of leaves which bound disjoint submanifolds.

\begin{proposition}\label{cylinderneighb}
There exists $\varepsilon>0$  such that for any leaves $Q\in \mathcal B$ and $P$ cobounding with $Q$ a codimension zero 
submanifold of $M$, such that  $\min_{p\in P, q\in Q} d(p,q) < \varepsilon$, then $Z$ is diffeomorphic to a cylinder.   
\end{proposition}
\begin{proof}
We prove first a slightly weaker assertion, as follows. 

\begin{lemma}
Let $Q,P$ be the cores of two  leaves belonging to cylindrical components of $M-C$. 
There exists $\delta>0$  such that for any leaves $Q\in \mathcal B$ and $P$ cobounding with $Q$ a codimension zero 
submanifold of $M$, such that  $P\subset N_{\delta}(Q)$, then $Z$ is a cylinder.   
\end{lemma}
\begin{proof}
We use the finite cover $\{U_i\}$ by cylinders of the form ${\rm int}(\overline{U})$ from the proof of 
Proposition \ref{integrability} above. 
Then $P\cap \overline{U_i}$ and $Q\cap \overline{U_i}$ are both disjoint graphs of functions 
defined on $\overline{U_i}$, for every $i$. 
Therefore they are the upper and bottom cap of a cylinder on $\overline{U_i}$. The union of all these cylinders
is a cylinder over $Q$, since $\xi|_Q$ must be orientable. 
\end{proof}
 
It remains to prove that for every $\delta>0$ there exists  $\varepsilon>0$  such that for any leaves $Q\in \mathcal B$ and $P$ cobounding with $Q$ a codimension zero 
submanifold of $M$, such that  $\min_{p\in P, q\in Q} d(p,q) < \varepsilon$, then  $P\subset N_{\delta}(Q)$. 
Assume  the contrary. Then we have a sequence of leaves $Q_i$ cobounding with $P$ such that 
 $\lim_i\min_{p\in P, q\in Q_i} d(p,q) =0$, but $P\not\subset N_{\delta}(Q_i)$. This provides a sub-sequence
 $Q_i$ such that $Q_i$ converges to $P$ but then $P\not\subset N_{\delta}(Q_i)$ for all large enough $i$, 
 contradicting  Proposition \ref{integrability} and Lemma \ref{Hausdorff}.  
 \end{proof}

\begin{proposition}\label{boundary}
If $M$ is a compact locally $1$-QC manifold, 
then there exists a compact branched hypersurface $L\subset \mathcal L$ such that  
connected components of $M-L$ are either components of ${\rm int}(C)$ or twisted cylinders.  
\end{proposition}
\begin{proof}
By Proposition \ref{integrability} $\mathcal L$ is the union of all limit leaves $L_q$. 
Note that two limit leaves $L_q, L_p\in \mathcal L$ are either disjoint or coincide. 
In fact they cannot intersect transversely, as both are limits of sequences of 
foliations leaves and if they are tangent somewhere then they should coincide. 

In the proof of Proposition \ref{integrability} we noticed that there exists some $\gamma >0$ 
such that the $\gamma$-neighborhood $N_{\gamma}(L_p)$ is diffeomorphic to a twisted product for any 
leaf $L_p$. This continues to hold for any branch $L_q^*$ of some limit leaf $L_q$. 

We consider a maximal set $\mathcal F$ of leaves which are cores of cylindrical components $E$ such that 
no two of them belong to the same component and $\min_{p\in P, q\in Q} d(p,q) > \varepsilon/2$, for every two 
leaves $P$ and $Q$. By above there are only finitely many such leaves. 
Consider the collection of branched hypersurfaces associated to components $E$ which are either topologically non-trivial 
or else they have leaves from $\mathcal F$ and denote by $L$ their union.
We claim that $L$ verifies the claim. In fact the closures of the connected components of $M-L$ are either twisted cylinders or closures of components of ${\rm int}(C)$. In fact two  cylinder components are separated by a component of ${\rm int}(C)$ 
or by a limit leaf. In the former case we need that the distance between boundaries be at least $\varepsilon$ 
or otherwise the union of the three components will be a cylinder. In particular there are only finitely many 
such components of ${\rm int}(C)$. 
\end{proof}

\begin{remark}
Note that $\mathcal L$ might have infinitely many connected 
components, for instance an infinite union of compact leaves accumulating on 
a compact leaf. Nevertheless these leaves are contained in ${\rm int}(\overline{M-C})$. 
\end{remark}

\subsection{End of proof of Theorem \ref{arbitrary}}
According to Proposition \ref{boundary} $M$ is the union of some interval bundles and finitely many  pieces 
$\overline{X_i}$ where $X_i$ are connected components of ${\rm int}(C)$.  Each $X_i$ is an open manifold with a constant curvature metric. 
Moreover, the curvature of each point of the closure $\overline{X_i}\subset M$ is also constant. The frontier 
${\rm Fr}(X_i)$ consists in finitely many components, each one being either a totally umbilical constant curvature 
submanifold, the union of two such submanifolds of $M$ which are glued together along some subset in such a way 
that the tangent space is well-defined at any point of ${\rm Fr}(X_i)$ or else a 
totally umbilical constant curvature immersed hypersurface with self-tangencies.  

Suppose that there exists a branched hypersurface $L^+\cup L^-$  in the frontier  ${\rm Fr}(X)$ of some $X$ as above. 
Since $L^+\cup L^-$ is a limit leaf, there exist arbitrarily small one-sided neighborhoods
of every $L\in\{L^+,L^-\}$ which are diffeomorphic to cylinders, the other boundary components being curvature leaves. 
Thus, by adjoining finitely many cylinders to $X$ we obtain a manifold with boundary $X^{ext}\supset \overline{X}$.
We claim that $X^{ext}$ also admits a constant curvature metric with totally umbilical boundary components.

We first claim that there exists a smooth $f:[0,1]\to \R_+$ such that 
the warped product metric on  the cylinder $Q=L\times [0,1]$ with rescaling function $f$ has constant curvature $H$. 
Recall that $L$ has constant intrinsic curvature $H+\alpha^2$ and is totally umbilical.  The curvature $k_Q(\sigma)$ of a tangent plane 
$\sigma$ spanned by the vectors $x+v$ and $w$, where $v$ and $w$ are tangent to $L$, $x$ to the $[0,1]$ factor and 
$\parallel x\parallel^2+\parallel v\parallel^2=1$ (in the warped product metric)
is given by:
\[  k_{Q^{}}(\sigma)=-\frac{f''(t)}{f(t)}\parallel x\parallel^2 + \frac{k_{L^{}}(v,w)-f'^2(t)}{f^2(t)}\parallel v\parallel^2,\]
where $k_L$ denotes the sectional curvature of $L$. 
It follows that for every values of $H$ and $H+\alpha^2$ there exists some scaling function $f$ defined on a small interval 
containing $0$ such that $f(0)=1$ and $ k_{Q^{}}(\sigma)=H$, for any  $\sigma$ (see e.g. \cite{BO}, Corollary 7.10 when 
$H+\alpha^2\leq 0$).

Now we can glue together the Riemannian manifold obtained by cutting  $M$ open along  $L^{+}$ and gluing 
the cylinder $Q^{+}$ with its warped product metric of constant curvature $H$ along $L^{+}$. 
The result is a smooth Riemannian metric because the corresponding second fundamental forms of $L^+$ within 
$M$ and $Q^+$ agree. We do a similar construction on $Q^-$ and further for every branched leaf in ${\rm Fr}(X)$.

Eventually note that the warped metric formula also makes sense for the case where $L^{+}$ is an immersed hypersurface in $M$. As we only have self-tangencies along $L^+$ 
we can perform the gluing procedure as above. 

The result is a Riemannian manifold diffeomorphic to $X^{ext}$ with a metric of 
constant curvature and totally umbilical 
boundary components.  

Therefore $M$ is a graph of space forms. 
This proves Theorem \ref{arbitrary}.

\section{Proof of Theorem \ref{lambdapositive}}
\subsection{Outline}
The positivity of $\lambda$ implies that  connected components  of the set of 
non-isotropic points consist of  spherical cylinders. 
In the second step we describe the closures of these connected components, which are not necessarily manifolds.  
To simplify the topology we want to excise the cores of these tubes, although their number might be infinite.   
A key argument is Grushko's theorem in group theory which translates here into the finiteness of the number 
of factors in a free  amalgamated product presentation of 
a finitely presented group. This shows that one could carry out 
surgery for only finitely many tubes. 

It remains to understand the topology of the remaining pieces, which are 
neighborhoods of the set of isotropic points. 
We show there is a way to cap off  the boundary components of each piece, 
by means of hypersphere 
caps in the hyperbolic space whose geometry is controlled. 
 A deep result of Nikolaev  (see \cite{Nik} and Proposition \ref{Nikolaev} for a precise statement) states that 
a manifold with bounded volume and diameter whose integral 
anisotropy is small enough should be diffeomorphic to a space form. 
This procedure constructs closed manifolds out of regular neighborhoods 
of isotropic points, whose geometry is controlled and whose 
integral anisotropy is arbitrarily small. Then Nikolaev's result 
shows that these are diffeomorphic to space forms.

\subsection{The topology of $M-C$ when $\lambda >0$}
Let $M^n$ be a closed  $n$-manifold, $n\geq 3$, admitting a $1$-QC metric with orientable distinguished line field, which is 
assumed conformally flat when $n=3$.   
Suppose that $\lambda$ is positive and $\pi_1(M)$ is infinite torsion-free.

  \begin{proposition}\label{leaves}
  Any non-compact connected component $E$ of $M-C$ 
  is diffeomorphic to $S^{n-1}\times \R$. 
  \end{proposition}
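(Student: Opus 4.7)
The plan is to use Proposition \ref{dicho} to identify $E$ as a cylinder over a positively curved space form, and then to combine the equivariant isometric immersion from Proposition \ref{immerse} with the torsion-freeness of $\pi_1(M)$ to pin that space form down to $S^{n-1}$.

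Since $\xi$ is orientable, $\widehat{E}=E$, so Proposition \ref{dicho} applied to the non-compact $E$ yields a diffeomorphism $E\cong P\times\R$, with $P$ a closed space form realized by each curvature leaf. The hypothesis $\lambda>0$ on $M-C$, together with the fact that $\lambda$ is the intrinsic sectional curvature of the curvature leaves, forces $P$ to have positive constant sectional curvature, so $P\cong S^{n-1}/\Gamma$ for some finite subgroup $\Gamma\leq O(n)$ acting freely. The inclusion of the leaf, $P\cong P\times\{0\}\hookrightarrow M$, induces $j_*:\pi_1(P)=\Gamma\to\pi_1(M)$, whose image is a finite subgroup of a torsion-free group, hence trivial. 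Consequently this inclusion lifts to an embedding $\widetilde{\jmath}:P\hookrightarrow\widetilde{M}$ of $P$ into the universal cover.

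Composing with the equivariant isometric immersion $f:\widetilde{M}\to\mathbb H_\kappa^{n+1}$ of Proposition \ref{immerse} yields an isometric immersion $f\circ\widetilde{\jmath}:P\to\mathbb H_\kappa^{n+1}$. Since $P$ is totally umbilical in $\widetilde{M}$ with principal curvatures $\pm\alpha$, the image is a totally umbilical hypersurface of $\mathbb H_\kappa^{n+1}$. Among complete totally umbilical hypersurfaces of $\mathbb H_\kappa^{n+1}$ (geodesic hyperplanes, equidistant sets, horospheres, or round geodesic spheres), only round spheres admit positive intrinsic sectional curvature, so the image sits on a round $(n-1)$-sphere $S\subset\mathbb H_\kappa^{n+1}$ of intrinsic curvature $\lambda$. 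Then $f\circ\widetilde{\jmath}:P\to S$ is a local isometry between closed $(n-1)$-manifolds of the same constant curvature; its image is both open (local diffeomorphism) and closed (compactness of $P$), hence all of $S$, so the map is a covering. Since $n\geq 3$, $S\cong S^{n-1}$ is simply connected, this covering is trivial, $P\cong S^{n-1}$, and $E\cong S^{n-1}\times\R$.

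The most delicate point I anticipate is confirming that the image of the compact leaf really fills out an entire round sphere rather than just a proper open piece; the open/closed/nonempty argument resolves this, and the positivity of $\lambda$ is what cleanly discards the non-compact totally umbilical alternatives (horospheres, geodesic hyperplanes, equidistant hypersurfaces). One should also verify that the composite $f\circ\widetilde{\jmath}$ genuinely lands on a single totally umbilical sphere (rather than wandering onto several), but this is immediate from connectedness of $P$ and the rigidity of totally umbilical hypersurfaces in a space form.
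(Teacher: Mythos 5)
Your proof is correct and follows essentially the same route as the paper's: pass through the equivariant isometric immersion of Proposition \ref{immerse} to identify the lifted leaf with a round $(n-1)$-sphere, and use torsion-freeness of $\pi_1(M)$ to kill the finite group by which the leaf in $M$ could differ from that sphere. The paper phrases the second step in terms of the (finite) stabilizer in $\pi_1(M)$ of a lifted leaf, while you phrase it as the image of $\pi_1(P)\to\pi_1(M)$ being a finite hence trivial subgroup and then conclude via a covering argument onto the simply connected round sphere; these are the same observation, and the paper cites Reckziegel directly for the round-sphere image where you invoke the classification of totally umbilical hypersurfaces, which is an equivalent justification.
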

  \begin{proof}
 Since $\lambda|_E >0$ curvature leaves are 
diffeomorphic to spheres quotients.

By Proposition \ref{immerse} there exists an isometric immersion 
$\Phi: \widetilde{M}\to \mathbb H_{\kappa}^{n+1}$. Note that 
each curvature leaf is  mapped isometrically onto some round 
$(n-1)$-sphere by the immersion in $\mathbb H_{\kappa}^{n+1}$ 
(see e.g. \cite{Re2}). Recall that {\em round $n$-spheres} 
(sometimes also called geodesic or extrinsic $n$-spheres) are 
intersections of a hypersphere with $\mathbb H_{\kappa}^{n}\subset \mathbb H_{\kappa}^{n+1}$  (see 
\cite{Re2}, section 2).

Now any curvature leaf $L$ in $M$ is covered by 
a disjoint union of spheres in $\widetilde{M}$ and thus $L$ is 
diffeomorphic to a spherical space forms. 
The stabilizer of one sphere in $\widetilde{M}$ 
is finite and obviously contained in $\pi_1(M)$. 
Thus it must be trivial as $\pi_1(M)$ was assumed to be torsion-free 
and hence the curvature leaf $L$ is a sphere.  Therefore $E$ is diffeomorphic to $S^{n-1}\times \R$. 
\end{proof}
When $C=\emptyset$ then, $M$ is 
diffeomorphic to  a spherical bundle over the circle.

\subsection{Closures  of globally 1-QC components within $1$-QC manifolds}
Suppose now that there exist isotropic points, so 
that $C\neq\emptyset$.

\begin{lemma}\label{compactification}
Let $E$ be a connected component of $M-C$ such that $\xi|_E$ is orientable. We assume that 
$\lambda \geq c >0$ on $E$. 
Then  $\overline{E}$ is  obtained from $E$ by adjoining 
two boundary spheres, which might possibly be degenerate or tangent at one point or along a codimension one sphere. 
\end{lemma}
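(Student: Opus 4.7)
The strategy is to use the equivariant isometric immersion $\Phi:\widetilde M\to\mathbb H^{n+1}_\kappa$ of Proposition \ref{immerse} to recast the description of $\overline E$ as a convergence problem for a one-parameter family of round spheres in the model space $\mathbb H^{n+1}_\kappa$, and then read the answer back in $M$. By Proposition \ref{leaves}, $E$ is diffeomorphic to $S^{n-1}\times\mathbb R$; the curvature foliation provides a parametrization $\phi:S^{n-1}\times(a,b)\to E$ with $\partial_t\phi=\widehat{\xi}$ and each slice $L_t$ a curvature leaf of intrinsic curvature $\lambda_t\geq c>0$. The two boundary pieces to be described are the Hausdorff limits of $L_t$ in $M$ as $t\to a^+$ and $t\to b^-$; both lie in $C$ since $E$ is a connected component of $M\setminus C$.

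Fix a lift $\tilde E\subset\widetilde M$ on which $\pi$ restricts to a diffeomorphism onto $E$ (possible because $E$ is simply connected). A direct calculation with the second fundamental form $h$ of Proposition \ref{immerse} shows that $\tilde L_t$ is totally umbilical in $\mathbb H^{n+1}_\kappa$ with umbilicity factor $\sqrt{\lambda_t+\kappa}>\sqrt{\kappa}$, so its image $S_t:=\Phi(\tilde L_t)$ is a round $(n-1)$-sphere of center $o_t$ and radius $r_t>0$ in $\mathbb H^{n+1}_\kappa$. The bound $\lambda_t\geq c>0$ forces $r_t$ to remain in a bounded interval, and the smoothness of $\Phi$ on $\widetilde M$ together with compactness of $\overline E\subset M$ forces $o_t$ to stay in a compact region of $\mathbb H^{n+1}_\kappa$. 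Because $(o_t,r_t)$ depends smoothly on $t$ and evolves according to an ODE driven by the flow of $\widehat{\xi}$, its accumulation points at each end of $(a,b)$ are unique, giving limits $(o_{*},r_{*})$ with $r_{*}\geq 0$. The limit $S_{*}$ is a single point if $r_{*}=0$ (degenerate case) and an embedded round $(n-1)$-sphere otherwise. Since $\Phi$ is a local isometry near the limit, the accumulation set $\partial^{*}\tilde E\subset\overline{\tilde E}\setminus\tilde E$ of $\tilde L_t$ at that end is mapped bijectively onto $S_{*}$, so upstairs the boundary piece is either a single point or a smooth $(n-1)$-sphere.

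It remains to project down by $\pi$. Since $\pi|_{\tilde E}$ is a diffeomorphism, identifications can take place only within $\partial^{*}\tilde E$, and any such identification is realised by a nontrivial deck transformation $\gamma\in\pi_1(M)$; by the equivariance of $\Phi$ it corresponds to an element $\rho(\gamma)\in SO(n+1,1)$ that stabilises $S_{*}$ setwise. The requirement that neighbouring leaves $L_t$ remain disjointly embedded in $M$ forces $\gamma$ to be an involution with nonempty fixed-point set on $S_{*}$. A hyperbolic isometry that stabilises a round sphere and restricts to a nontrivial involution of it has fixed set on $S_{*}$ equal either to a single point or to a totally geodesic $(n-2)$-subsphere, corresponding respectively to a boundary sphere tangent to itself at one point and to a boundary sphere tangent to itself along a codimension-one subsphere. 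Together with the embedded case and the degenerate case $r_{*}=0$ this yields the four possibilities in the statement. The main obstacle is this last classification step: one must combine the equivariance of $\Phi$ with the embeddedness of nearby leaves to exclude higher-order stabilisers as well as more exotic fixed-point sets, whereas the preceding smooth-compactness analysis of the family $(o_t,r_t)$ is largely routine.
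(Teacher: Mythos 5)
Your overall strategy (lift to $\widetilde M$, push through $\Phi$ to get a one-parameter family of round spheres, take Hausdorff limits at the two ends) is the same as the paper's, which follows \cite{DDM}, Lemma 2.7. However, there is a genuine gap at the crucial step. You justify uniqueness of the limit sphere at each end by asserting that because $(o_t,r_t)$ ``evolves according to an ODE driven by the flow of $\widehat\xi$, its accumulation points at each end of $(a,b)$ are unique.'' That is not a valid inference: a solution of a smooth ODE confined to a compact region need not converge as $t$ approaches an endpoint of its maximal interval -- its $\omega$-limit set is a nonempty connected compact set but can contain more than one point. The paper replaces this by a topological separation argument borrowed from \cite{DDM}: if two distinct limit spheres $L_\infty$, $L'_\infty$ both passed through $\tilde q$, a small connected neighborhood of a point of $L_\infty\setminus L'_\infty$ would meet all leaves in one approximating sequence and none in another; since each curvature leaf separates the cylinder $E_0$, this contradicts connectedness. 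You need an argument of this type, not an appeal to ODE uniqueness.

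A second problem is interpretative. The lemma's phrase ``tangent at one point or along a codimension one sphere'' refers to the two boundary spheres coming from the two ends of the cylinder possibly touching each other, not to a single boundary sphere being self-identified under a deck transformation. Accordingly the paper's proof lifts both limit spheres to $\mathbb H^{n+1}_\kappa$, observes that they bound convex $n$-disks, and concludes that the intersection of the two spheres -- being the boundary of an intersection of convex sets -- is either a point or an $(n-2)$-sphere. Your final paragraph about involutive deck transformations stabilising $S_*$ and their fixed-point sets addresses a question that the lemma is not asking, and in any case would need more care to rule out non-involutive stabilisers or fixed sets of other dimensions.
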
 
\begin{proof}
The proof is similar to (\cite{DDM}, Lemma 2.7). 
Let $q\in \overline{E}\cap C$. 
Choose a component $E_0$ of $\pi^{-1}(E)\subset \widetilde{M}$, which is diffeomorphic to a spherical cylinder
since $E$ is simply connected.  
Let then $\widetilde{q}$ be a lift of $q$ which belongs to $\overline{E}$.

Any connected component $L_{\widetilde{p}}$ of the preimage $\pi^{-1}(L_{p})\subset \widetilde{M}$  
of a curvature leaf through $p$ 
is sent by the isometric immersion $\Phi$ onto an embedded round sphere  $S_{\lambda(p)}$ of curvature $\lambda(p)$ 
in $\mathbb H_{\kappa}^{n+1}$. Now $L_{p}$ is a quotient of  the  sphere $L_{\widetilde{p}}$ by a finite subgroup of $\pi_1(M)$. Since $\pi_1(M)$ is torsion-free  $L_{p}$ is also a sphere, which is isometric to  $S_{\lambda(p)}$. 

Let $p_i\in E$ be a sequence converging in $M$ to $q$ and $\widetilde{p_i}\in E_0$ lifts converging to $\widetilde{q}$
in $\widetilde{M}$. 
The curvature leaves $L_{p_i}$ lift to curvature leaves $L_{\widetilde{p_i}}$ in $\widetilde{M}$ 
which are round spheres of  radius $R_i$.  Since $\lambda_i$ is bounded from below, 
$R_i$ are bounded. The sphere $L_{\widetilde{p_i}}$ lives in a  geodesic 
$n$-plane $\mathbb H_{\kappa}^n$ orthogonal at  the lift  of the unit vector field $\xi|_{\widetilde{p_i}}$. 
By a compactness argument and passing to a sub-sequence we can assume that $R_i$ and $\xi|_{\widetilde{p_i}}$
converge to $R_{\infty}$ (which might be $0$) and $\xi_{\infty}$, respectively. Then the spheres $L_{\widetilde{p_i}}$ converge to the 
round sphere $L_{\infty}$ of radius $R_{\infty}$ living in the $n$-plane $\mathbb H_{\kappa}^n$ orthogonal 
to $\xi_{\infty}$ and passing through $\widetilde{q}$. 
 
The argument from (\cite{DDM}, Lemma 2.7) applies to show that 
for any sequence $\widetilde{p'_i}\in E_0$  converging to $\widetilde{q}$
in $\widetilde{M}$ the spheres $L_{\widetilde{p'_i}}$ converge to the 
round sphere $L_{\infty}$. Indeed, if we had another limit sphere $L'_{\infty}$ passing through $\widetilde{q}$, then a 
small enough connected neighborhood $V$ of some point $p\in L_{\infty}-L'_{\infty}$ would intersect 
all leaves   $L_{\widetilde{p_i}}$ and would miss all  $L_{\widetilde{p'_i}}$. Since any curvature leaf in 
$E_0$ disconnects $E_0$, this would imply that $V$ is not connected, contradicting our choice. 

Therefore $\overline{E_0}$ is obtained from $E_0$ by adjoining two boundary (possibly degenerate) 
round spheres corresponding to the ends. 
Since $\overline{E_0}$ is compact  and $\pi$ is a covering  $\pi(\overline{E_0})=\overline{E}$.  
Then $\overline{E}$ is obtained from $E$ by adjoining two round spheres. 
They cannot intersect transversely, as otherwise close enough leaves of $E$ should intersect. 
Thus the boundary spheres should be either disjoint or tangent. Lifting  both of them to $\widetilde{M}$ we obtain two round 
$(n-1)$-spheres in $\mathbb H_{\kappa}^{n+1}$.  They bound two convex $n$-balls whose intersection is convex. 
Therefore the spheres intersection is either one point or a $(n-2)$-sphere and in particular it is connected. 
\end{proof}
 
If one only assumes that the metric is 1-QC the curvature leaves function $\lambda$ might have poles at $C$, contrasting with 
the situation described in  Lemma \ref{extension-lambda} for locally 1-QC metrics.  
The behavior of $\lambda$ is directly related with the behavior, possibly wild,  of the distinguished line field $\xi$ in a neighborhood of an isotropic point. Consider a spherical 
cylinder with a ball adjoined to it, having the form of a 
rotationally invariant cap whose north pole is the only isotropic point. 
Then curvature leaves are spheres whose intrinsic curvature    
$\lambda$ explodes when approaching the isotropic point.
In fact,  this is the only phenomenon which can arise:

\begin{lemma}\label{poles}
Assume that  $\lambda >0$ on $M-C$ and set 
\[C_{\infty}= \{q\in \overline{M-C}\cap C; \lim\sup_{p\to q}\lambda (p) =\infty\}\]  
Then, 
\begin{enumerate}
\item either there exists a connected $E$ component of $M-C$ with the property that 
$q\in\overline{E}$ and $\overline{E}$ is  diffeomorphic to a ball containing $q$. 
\item or else there exists a sequence $E_i$ of components of $M-C$ 
with the property that $q\in \overline{\cup_{i=1}^{\infty} E_i}$, but 
$q\not\in \overline{E_i}$, for any $i$. Moreover, $\overline{E_i}$ are embedded annuli with disjoint interiors. 
\end{enumerate}   
\end{lemma}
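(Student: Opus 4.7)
The plan is to fix $q \in C_\infty$ and choose a sequence $p_k \in M-C$ with $p_k \to q$ and $\lambda(p_k) \to \infty$. The key geometric input is Proposition~\ref{leaves}: since $\pi_1(M)$ is torsion-free, every curvature leaf $L_{p_k}$ is a round $(n-1)$-sphere of intrinsic curvature $\lambda(p_k)$, and hence has diameter tending to $0$. Thus the $L_{p_k}$ Hausdorff-converge to the point $q$ in $M$. Let $E_k$ denote the connected component of $M-C$ containing $p_k$.

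The argument will split according to whether a single component absorbs a tail of $(p_k)$. If some $E$ contains infinitely many $p_k$, then $q \in \overline{E}$, and Lemma~\ref{compactification} identifies $\overline{E}$ as the spherical cylinder $E \cong S^{n-1} \times \mathbb{R}$ compactified by two (possibly degenerate) round boundary spheres; the collapse $L_{p_k} \to \{q\}$ forces the boundary on the side of $q$ to be the degenerate point $\{q\}$. When the opposite boundary is non-degenerate, $\overline{E}$ is diffeomorphic to a closed disk $D^n$ with $q$ as apex, which is conclusion~(1). The borderline sub-case, in which the opposite boundary is also degenerate, would yield $\overline{E}$ homeomorphic to a ``pinched'' sphere with two isotropic poles; I would exclude it (or absorb it into the disk conclusion by excising an equatorial leaf) using that $M$ is a graph of space forms (Theorem~\ref{arbitrary}) together with the infinite torsion-free hypothesis on $\pi_1(M)$, since such a closed spherical summand is incompatible with the decomposition.

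If no single component holds a tail of $(p_k)$, I extract so that the $E_k$ are pairwise distinct. I claim $q \notin \overline{E_k}$ for every $k$: otherwise the previous paragraph, applied to $E_k$, would present $\overline{E_k}$ as a disk containing $q$, placing us in conclusion~(1). Since $p_k \in E_k \to q$ we still have $q \in \overline{\bigcup_k E_k}$. The two round boundary spheres of each $\overline{E_k}$ supplied by Lemma~\ref{compactification} cannot collapse to $q$, by the preceding claim; and if either collapsed to a different isotropic point $q'_k \in C$, then $\overline{E_k}$ would be a disk with apex $q'_k$, which is irrelevant to the local picture at $q$ and can be dropped from the sequence. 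After such an extraction both ends of each $\overline{E_k}$ are non-degenerate, so $\overline{E_k}$ is an embedded annulus $S^{n-1} \times [0,1]$. Disjointness of interiors is automatic because the $E_k$ are distinct components of $M-C$.

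I expect the main obstacle to be the ``pinched sphere'' sub-case above, which sits outside Lemma~\ref{compactification}'s disk conclusion and demands a genuinely global topological input---the graph-of-space-forms structure combined with the infinite torsion-free fundamental group---to either rule it out or reduce it to conclusion~(1) by cutting $E$ along a suitable equatorial leaf. Once that configuration is handled, the remainder is a bookkeeping exercise about the ends of the annular components $\overline{E_k}$ provided by the compactification lemma.
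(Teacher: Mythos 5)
Your overall dichotomy (does some single component $E$ absorb a tail of the sequence $p_k$?) matches the paper's, and your use of the shrinking round-sphere leaves $L_{p_k}$ converging to $\{q\}$ is sound. But the actual mechanism you rely on is different from the paper's, and one of your supporting steps is incorrect.

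The paper does not invoke Lemma~\ref{compactification} at all in this proof. Its engine is a volume estimate: on a compact Riemannian $M$ any embedded $(n-1)$-sphere of diameter $\le c_1$ bounds an embedded disk with volume controlled by a function $f_2(d)\to 0$; combined with the observation that an embedded codimension-one sphere in $M$ bounds a \emph{unique} embedded disk (otherwise $M\cong S^n$, impossible since $\pi_1(M)$ is infinite), this forces a nesting $D_\lambda\subset D_1$ of disks bounded by ever-smaller leaves, and the volume comparison then places $q$ inside $D_1$. This uniqueness-of-disk observation is also precisely what kills the ``pinched sphere'' configuration you flag as your main obstacle: if both ends of $\overline{E}$ degenerated, $M$ would decompose as two tiny disks glued to a cylinder and would be a sphere. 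Your alternative route---appealing to Theorem~\ref{arbitrary} (graph of space forms)---does not apply here: that theorem is proved for \emph{local} $1$-QC manifolds, while the setting of this lemma is merely $1$-QC, where isotropic points may be arranged arbitrarily. This is a real gap in your argument, not just a stylistic choice, and you correctly sensed that something global was needed; the right global input is the unique-disk fact, not the graph structure.

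A second gap: in case~B your claim that $q\notin\overline{E_k}$ is justified by ``applying the previous paragraph to $E_k$,'' but that paragraph needed $E$ to contain an infinite subsequence of the $p_j$ with $\lambda(p_j)\to\infty$. A single component $E_k$ of your extracted sequence contains only the one point $p_k$; having $q\in\overline{E_k}$ together with $\lambda(p_k)$ large does not by itself force the $q$-side boundary sphere of $\overline{E_k}$ to degenerate, so the reduction to conclusion~(1) is not immediate. The paper sidesteps this by phrasing its dichotomy directly as ``$q$ lies in one of the disks $D_1$'' versus not, rather than by the $\overline{E_k}$-membership you use. The remaining bookkeeping about annuli with disjoint interiors is at the same level of rigor as the paper's own discussion.
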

\begin{proof}
Given a compact Riemannian 
$M$ there exists some  constant $c_1$ such that 
any embedded sphere $S$ of diameter $d\leq c_1$ bounds an embedded ball $D$ of 
diameter $f_1(d)$ and volume $f_2(d)$, such that $\lim_{d\to 0}f_i(d)=0$, $i=1,2$. 
It suffices to cover $M$ by finitely many open charts 
diffeomorphic to balls and to choose then $c_1$ so that any metric ball on $M$ 
of radius $c_1$ is contained within some chart.
 
Now, the  diameter of a round sphere  $S_{\lambda}$  goes to $0$, 
when $\lambda\to \infty$. The same holds for the diameter of $L_{\lambda}$ 
in the Riemannian metric of $M$. 
Therefore, if $\lambda$ is larger than some $c_2$ then $L_{\lambda}$
bounds an embedded ball $D_{\lambda}$ in $M$. 

Consider $\lambda_{1}$ such that the intrinsic diameter of $S_{\lambda_{1}}$ is smaller than $c_1$. 
There exists a curvature leaf $L_1$ of curvature $2\lambda_{1}$ in $E$, which therefore bounds a 
ball $D_1\subset M$. 
Let $L_2\subset E\cap (M-D_1)$ be a curvature leaf  whose  
curvature is $\lambda >\lambda_{1}$ and thus it bounds 
also a ball $D_2$ in $M$. The leaves $L_1$ and $L_2$ bound a 
submanifold $E_0$. From  Proposition \ref{leaves} and the strong form of the global Reeb stability theorem 
(see \cite{Mil,EMS}) $E_0$ is compact and diffeomorphic to a spherical cylinder. 

Observe that there is an unique embedded ball $D$ bounded by an 
embedded codimension one sphere in $M$, as otherwise  
$M$ would be homeomorphic to a sphere, contradicting the fact that 
$\pi_1(M)$ is infinite. Hence $D_2=E_0\cup D_1$. 

If $q\not\in D_1$, as $q\in C_{\infty}$ we could choose $\lambda$ 
large enough so that $f_2(\lambda) < {\rm vol}(D_1)$. 
On the other hand $D_2\subset D_1$ and hence the volume of the unique 
embedded ball bounding $L_2$ would be larger than $ {\rm vol}(D_1)$, 
contradicting our choice of $\lambda$.  
Thus $q\in D_1$.

Note that, when $q\in C_{\infty}$ is not as above, then there exists an infinite sequence $E_i$ of connected components of $M-C$ such that 
$q\in \overline{\cup_i E_i}$, but $q\not\in \overline{E_i}$, for all $i$. 
These components should have disjoint interiors and they are diffeomorphic to spherical cylinders. 
The size of $\overline{E_i}$ (namely the radius of a boundary circle and the width) 
should converge to $0$. Moreover,  for every $\varepsilon >0$ all but finitely many $\overline{E_i}$ 
 are  contained in  a metric ball of radius  $\varepsilon$ around $q$.  They are therefore null-homotopic. 
\end{proof}

\subsection{Construction of neighborhoods of isotropic points}
\begin{lemma}\label{saturated}
Suppose that $\lambda> 0$ on $M-C$. We set $V_{\epsilon}(C)=\{p\in M; |H(p)-N(p)| < \epsilon\}$. 
Then there exist arbitrarily small saturated neighborhoods of $C$, namely  for every $\varepsilon >0$ small enough 
there exists an open neighborhood $W_{\varepsilon}\subset V_{\varepsilon}(C)$ such that $W_{\varepsilon}-C$ is a union of leaves and 
$W_{\varepsilon}\supset V_{\delta}(C)$. 
\end{lemma}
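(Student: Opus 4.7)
The plan is to build $W_\varepsilon$ by leaf-saturating a sufficiently small sublevel set of the continuous function $f:=|H-N|$, which extends to all of $M$ by Lemma \ref{continuous} and vanishes precisely on $C$ (since $H=N$ characterizes isotropic points). Consequently $V_\varepsilon(C)=f^{-1}([0,\varepsilon))$ is already open and contains $C$, but it need not be a union of curvature leaves because $f$ is not a priori constant on leaves. The content of the lemma therefore reduces to the following saturation statement:
\[
(\star)\qquad \forall\,\varepsilon>0\ \exists\,\delta=\delta(\varepsilon)>0:\ \text{every leaf } L\subset M-C \text{ meeting } V_\delta(C)\text{ satisfies } L\subset V_\varepsilon(C).
\]
Once $(\star)$ is in hand, set
\[
W_\varepsilon:=V_\delta(C)\ \cup\ \bigcup\bigl\{L:\ L\text{ curvature leaf in }M-C,\ L\cap V_\delta(C)\neq\emptyset\bigr\},
\]
so that $W_\varepsilon\subset V_\varepsilon(C)$ by $(\star)$, $W_\varepsilon-C$ is a union of entire leaves by construction, and $W_\varepsilon\supset V_\delta(C)$ tautologically.

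To prove $(\star)$, I would argue by contradiction. Negating gives $\varepsilon>0$, sequences $\delta_i\downarrow 0$, leaves $L_i\subset M-C$ and points $p_i\in L_i\cap V_{\delta_i}(C)$, $q_i\in L_i$ with $f(q_i)\geq\varepsilon$. By compactness of $M$ and continuity of $f$, a subsequence satisfies $p_i\to p_\infty\in C$. Lemma \ref{extension-lambda} extends $\lambda$ smoothly across $\overline{M-C}$ and gives $\lambda(p_\infty)=H(p_\infty)+\alpha^2(p_\infty)$; under the positivity hypothesis of Theorem \ref{lambdapositive} at isotropic points, $\lambda(p_\infty)>0$, so after passing to a further subsequence $\lambda(p_i)\to\lambda_\infty\in(0,+\infty]$. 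If $\lambda_\infty=+\infty$, then $p_\infty\in C_\infty$ and Lemma \ref{poles} shows that the leaves $L_i$ shrink in diameter to the point $p_\infty$. If $\lambda_\infty\in(0,+\infty)$, then the equivariant isometric immersion of Proposition \ref{immerse} maps the lifted leaves $\widetilde{L_i}$ to round $(n-1)$-spheres in $\mathbb H_\kappa^{n+1}$ whose radii stay in a compact subinterval of $(0,\infty)$; arguing as in Lemma \ref{compactification}, the lifted spheres converge in Hausdorff distance to a round limit sphere $\widetilde{L_\infty^C}$ through a lift of $p_\infty$, which descends to a boundary sphere $L_\infty^C\subset \overline E\cap C$ of the component $E\supset L_i$. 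In both subcases the Hausdorff distance from $L_i$ to $C$ tends to $0$, so uniform continuity of $f$ on the compact manifold $M$ forces $\max_{L_i}f\to 0$, contradicting $f(q_i)\geq\varepsilon$.

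It remains to verify that $W_\varepsilon$ is open in $M$. Points of $V_\delta(C)\subset W_\varepsilon$ are interior trivially. For $q\in W_\varepsilon\setminus V_\delta(C)$, the leaf $L$ through $q$ meets $V_\delta(C)$ at some $x$; by Proposition \ref{leaves}, $L$ is diffeomorphic to $S^{n-1}$, hence simply connected with trivial leaf holonomy, and Reeb's stability theorem supplies a saturated product neighborhood $L\times(-\eta,\eta)$ of $L$ inside $M-C$. Each slice-leaf $L\times\{t\}$ in this neighborhood passes through a point close to $x$, hence meets the open set $V_\delta(C)$, and so belongs to $W_\varepsilon$; thus the whole product neighborhood lies in $W_\varepsilon$ and $q$ is interior. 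The main obstacle is the non-shrinking subcase of $(\star)$: one must upgrade the pointwise convergence of the round spheres in $\mathbb H_\kappa^{n+1}$ provided by the immersion to an actual Hausdorff convergence of the leaves $L_i$ to $L_\infty^C$ in the ambient manifold $M$, and then confirm that $L_\infty^C$ indeed lies in $C$ rather than re-entering $E$; both of these are the substantive steps inherited from the proof of Lemma \ref{compactification}.
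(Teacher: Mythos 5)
Your reduction to the saturation statement $(\star)$ and the explicit construction of $W_\varepsilon$ as the union of $V_\delta(C)$ with all leaves meeting it is a clean formalization of what the paper leaves implicit, and your openness check via Reeb stability at a leaf through $q\in W_\varepsilon\setminus V_\delta(C)$ is correct and a useful addition. However, your proof of $(\star)$ anchors the convergence at the wrong end and this costs you: you push all the $L_i$ toward the isotropic accumulation point $p_\infty$, which forces you into the $\lambda_\infty$ dichotomy, the isometric immersion, and Lemma \ref{compactification}, none of which is needed. The paper anchors at the \emph{non-isotropic} accumulation point $q$ of the $q_i$ (which exists by compactness and satisfies $f(q)\geq\varepsilon>0$, so $q\in M-C$). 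Since $L_q$ is a compact leaf of the foliation of $M-C$ with finite holonomy (Proposition \ref{compactleaf} and the global Reeb stability used in Proposition \ref{dicho}), every leaf through a point sufficiently near $q$ lies in a saturated product neighborhood of $L_q$, so $L_{q_i}\to L_q$ in the Hausdorff sense. Then $p=\lim p_i\in L_q$ because $p_i\in L_{p_i}=L_{q_i}$, and $p\in C$ contradicts $L_q\subset M-C$. This entirely sidesteps the behavior of $\lambda$ near $C$.

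The gaps in your route are real. You invoke Lemma \ref{extension-lambda} to extend $\lambda$ smoothly across $\overline{M-C}$, but that lemma is only established for \emph{local} $1$-QC manifolds; here $M$ is merely $1$-QC and $\lambda$ can genuinely blow up (this is the whole point of Lemma \ref{poles} and of $C_\infty$), so you cannot conclude $\lambda(p_\infty)=H(p_\infty)+\alpha^2(p_\infty)$. You also import the hypothesis of positive sectional curvature at isotropic points, which belongs to Theorem \ref{lambdapositive} but not to the statement of this lemma; the paper's argument does not need it. In the $\lambda_\infty$-finite subcase you assume after passing to a subsequence that all $L_i$ lie in a single component $E$ of $M-C$ so that Lemma \ref{compactification} applies; that is not justified, and Lemma \ref{poles}(2) shows that infinitely many distinct components can accumulate at a single $q\in C_\infty$. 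Finally, you yourself flag that the passage from convergence of round spheres in $\mathbb H_\kappa^{n+1}$ to Hausdorff convergence of the $L_i$ in $M$ is left open. Re-anchoring the contradiction at $q$ as the paper does dissolves all of these difficulties at once.
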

\begin{proof}
This is a classical result in  codimension one foliations with compact leaves.  
From Lemmas \ref{compactification} and \ref{poles} for each 
$p\in C\cap \overline{M-C}$  there exists a (possibly not unique) limit curvature leaf $L_p\subset C\cap \overline{M-C}$ which passes through $p$. 
This limit curvature leaf might degenerate to a single point.

Assume now that $V_{\varepsilon}(C)$ does not contain a saturated neighborhood. Then 
we can find a sequence of pairs  of points $p_i,q_i$, each pair belonging to the 
same curvature leaf in $M-C$ such that $p_i\to p\in C$ but 
$q_i\not\in V_{\varepsilon}(C)$. If $q$ is an accumulation point of $q_i$ then it cannot belong to $C$ since $|H(q)-N(q)| \geq \varepsilon$. The curvature leaf $L_q$ passing 
through $q$ is contained therefore in $M-C$.  This implies that  
the compact curvature leaves $L_{q_i}$ passing through $q_i$ converge 
in the Hausdorff topology towards $L_q$. 
On the other hand we have 
$L_{p_i}=L_{q_i}$ so that it also converges towards  some $L_p$. 
This  contradicts the fact that 
$L_q$ is complete and thus it cannot contain points from $C$. 

Eventually, if  there does not exist some $\delta>0$ such that $W_{\varepsilon}\supset V_{\delta}(C)$, 
we would find a sequence $p_n\in V_{\frac{1}{n}}(C)-W_{\varepsilon}$. If $p$ is a limit point of the sequence $p_n$ then 
$p\not\in W_{\varepsilon}$, as $W_{\varepsilon}$ is open. On the other hand $p\in C$, which is a contradiction. 
\end{proof}

Now $W_{\varepsilon}$ is a manifold containing $C$ and whose frontier 
$\overline{W_{\varepsilon}}-W_{\varepsilon}\subset M-C$. 
Since $W_{\varepsilon}$ is a saturated subset, its frontier consists of union of curvature leaves. 
From Lemma \ref{compactification} these are round spheres, and in particular $\overline{W_{\varepsilon}}$ is 
a codimension zero submanifold of $M$. 

Denote by $N_i(\varepsilon)$ the connected components of $\overline{W_{\varepsilon}}$. There are manifolds with boundary, which we call {\em standard neighborhoods of isotropic points}.  The complementary $M-W_{\varepsilon}$ is a union of spherical 
cylinders, which will be called {\em necks}.

The first main result is the following chopping result: 

\begin{proposition}\label{essential}
Assume that $\lambda(p) \geq c >0$ for $p\in M-C$. 
Then there exist only  finitely many  components  $N_i(\varepsilon)$ and also only finitely many 
necks. 
\end{proposition}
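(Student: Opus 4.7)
The plan is a direct geometric argument. First I would produce, inside each component $N_i(\varepsilon)$, a metric ball of uniform positive radius. This will bound the number of components by compactness of $M$. Finiteness of the necks will then follow because each $N_i(\varepsilon)$ is a compact topological manifold with only finitely many boundary spheres, and each neck has exactly two of these as its ends.

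Concretely, I would start from the fact that $|H-N|$ is smooth on the compact manifold $M$ by Lemma \ref{continuous}, hence globally Lipschitz with some constant $K$. With $\delta$ as in Lemma \ref{saturated} (so $V_\delta(C) \subset W_\varepsilon$), any $p \in C$ satisfies $(H-N)(p) = 0$, and hence by the Lipschitz bound $B(p, \delta/K) \subset V_\delta(C) \subset W_\varepsilon$. Being open and connected, this ball sits inside a single component of $\overline{W_\varepsilon}$, namely the $N_i(\varepsilon)$ containing $p$. After a mild refinement of $W_\varepsilon$ (see below), every $N_i(\varepsilon)$ contains such a ball of radius $r_0 := \delta/K$. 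By compactness of $M$ the volume of any such ball is bounded below by a uniform $v_0 > 0$, so the pairwise disjoint components $N_i(\varepsilon)$ number at most ${\rm vol}(M)/v_0 < \infty$.

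For the necks, I would use that each $N_i(\varepsilon)$ is a compact topological manifold with boundary, being a connected component of the compact codimension-zero submanifold $\overline{W_\varepsilon}$ whose boundary consists of smooth round spheres (curvature leaves, by Lemma \ref{compactification}). Hence each $N_i(\varepsilon)$ has only finitely many boundary spheres, and since each neck is a spherical cylinder whose two ends are boundary spheres of (possibly the same) $N_i(\varepsilon)$, the total number of necks is at most half the total number of boundary spheres of the $N_i(\varepsilon)$, which is finite by the first step.

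The main technical obstacle is the refinement mentioned above: Lemma \ref{saturated} only asserts $W_\varepsilon \supset V_\delta(C)$, so a priori $W_\varepsilon$ could contain components on which $|H-N|$ is strictly positive throughout---such components would not meet $C$, and my Lipschitz ball estimate would not apply there. I would handle this by replacing $W_\varepsilon$ with the union of those components of its saturation which meet $C$; this still yields a saturated open neighborhood of $C$ lying between $V_\delta(C)$ and $V_\varepsilon(C)$, and therefore still qualifies as a $W_\varepsilon$ in the sense of Lemma \ref{saturated}. Interestingly, this approach uses only the strict positivity $\lambda > 0$ (via Lemma \ref{saturated}) rather than the uniform lower bound $\lambda \geq c$ in the hypothesis; the uniform bound presumably enters in the subsequent steps toward Theorem \ref{lambdapositive}.
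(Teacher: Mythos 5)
Your proof takes a genuinely different route from the paper's. The paper's argument for Proposition \ref{essential} (together with its companion Proposition \ref{compact}) is fundamentally group-theoretic and geometric: it invokes Grushko's theorem and van Kampen to bound the number of non-simply-connected and non-separating pieces, then uses the equivariant immersion into $\mathbb H_\kappa^{n+1}$ and Lemma \ref{caps} to get a uniform volume lower bound on simply-connected pieces, and finally dispatches infinitely many cylindrical pieces via the limit-leaf argument. Your approach replaces all of this with a direct metric argument: the Lipschitz estimate on $H-N$ plants a uniform-radius ball inside every standard neighborhood that meets $C$, and compactness does the rest. This is substantially simpler and more self-contained than the paper's. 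The trade-off is that the paper's Grushko route simultaneously produces the structural information (simple connectivity, holed-sphere topology via Kuiper, bound on the number of trivial separating spheres in Remark \ref{separating}) that is actually used later in Lemma \ref{niko} and Proposition \ref{isotrop}; your argument would prove the proposition as stated but would not replace those ancillary facts.

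There is one point you pass over too quickly, and it is exactly the point the paper's Proposition \ref{compact} is there to address. In your neck-counting step you assert that each $N_i(\varepsilon)$ is "a compact topological manifold with boundary" and hence has finitely many boundary spheres. But a closed saturated set whose frontier is a union of smooth compact leaves need not be a manifold with boundary: the boundary leaves could a priori accumulate on one another, producing a frontier with infinitely many components and a point with no half-ball neighborhood. The paper labors over precisely this (Grushko plus the volume bound from Lemma \ref{caps}). Fortunately your refinement makes a much easier argument available, and you should make it explicit: after restricting to components of $W_\varepsilon$ meeting $C$, any component of $W'_\varepsilon \cap E$ (with $E \cong S^{n-1}\times(a,b)$ a component of $M-C$) must reach one of the two ends of $E$, since a component of $W'_\varepsilon$ confined to the interior of $E$ would be disjoint from $C$ and would have been discarded; hence $W'_\varepsilon$ has at most two boundary leaves in each $E$. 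Moreover these boundary leaves all lie in the compact set $\{|H-N|\ge \delta'\}\subset M-C$, so if infinitely many were present they would accumulate at a point of $M-C$, forcing infinitely many of them into a single $E$ — a contradiction. With that inserted, the finiteness of boundary leaves (and hence of necks) follows, and your proof is complete. One small caveat on your closing remark: the identification of boundary leaves with round spheres does rely on Lemma \ref{compactification}, which is stated under the hypothesis $\lambda \ge c > 0$ on $E$; one can instead appeal to Proposition \ref{leaves} (which uses $\lambda>0$ and torsion-free $\pi_1$), but it is not accurate to say the argument is entirely independent of the uniform lower bound as stated.
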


\subsection{Preliminaries concerning the positive $\lambda$}

\begin{lemma}\label{lowerlambda}
If $\lambda(p) >0$, for  all $p\in M-C$  and $H|_C >0$, then there exists $\lambda_{\min}$ such that 
$\lambda(p) \geq \lambda_{\min}>0$, for all $p\in M-C$. 
\end{lemma}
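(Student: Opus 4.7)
The plan is to invoke the smooth extension of $\lambda$ from Lemma \ref{extension-lambda}, evaluate it explicitly on the boundary points $C\cap\overline{M-C}$, and then use compactness.

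First, recall from Lemma \ref{extension-lambda} that $\lambda$ extends smoothly to an open set $U\supset\overline{M-C}$, and that at every point $p\in C\cap\overline{M-C}$ the extension is given locally by
\[
\lambda(p)=H(p)+\alpha(p)^{2},
\]
where $\alpha=g(\nabla_{X_{U}}\widehat{\xi}_{U},X_{U})$ for a smooth local unit vector field $X_{U}$ orthogonal to the local lift $\widehat{\xi}_{U}$ of $\xi$. This formula is independent of the choices made and provides the continuous (indeed smooth) value of $\lambda$ at every boundary point of $M-C$.

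Second, I would combine this with the two positivity hypotheses. The assumption $\lambda>0$ on $M-C$ takes care of interior points, while on the boundary set $C\cap\overline{M-C}$ the displayed formula together with $\alpha^{2}\geq 0$ yields
\[
\lambda(p)\;\geq\; H(p),
\]
and $H|_{C}>0$ by hypothesis. Therefore the extended function $\lambda:\overline{M-C}\to\mathbb{R}$ is strictly positive everywhere on $\overline{M-C}$.

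Third, I would conclude by compactness: since $M$ is closed, the set $\overline{M-C}$ is compact; a continuous strictly positive function on a compact set attains a positive minimum, which we call $\lambda_{\min}$. This gives $\lambda(p)\geq\lambda_{\min}>0$ for all $p\in M-C$, as desired.

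The only conceptual subtlety is that $\lambda$ is a priori a quotient of two quantities that both vanish on $C$, so without more information it could conceivably blow up or degenerate to zero near $C$; this is the point already handled by Lemma \ref{extension-lambda}, where the formula $\lambda=H+\alpha^{2}$ on the boundary confirms the extension takes finite values bounded below by $H$. Given that extension, the positivity argument is essentially a one-line compactness statement and presents no further obstacle.
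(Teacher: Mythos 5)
Your argument invokes Lemma~\ref{extension-lambda} to extend $\lambda$ continuously to $\overline{M-C}$, and then uses compactness. The problem is that Lemma~\ref{extension-lambda} is proved only for \emph{local} $1$-QC manifolds, whereas the lemma in question lives inside the proof of Theorem~\ref{lambdapositive}, which concerns the strictly wider class of $1$-QC manifolds. In that setting the paper explicitly warns (just before Lemma~\ref{poles}) that ``$\lambda$ might have poles'': a rotationally symmetric cap with a single isotropic north pole exhibits $\lambda\to\infty$ as one approaches $C$. So there need not exist any continuous extension of $\lambda$ to $\overline{M-C}$, and the step ``$\lambda$ is a continuous strictly positive function on the compact set $\overline{M-C}$, hence attains a positive minimum'' is not available.

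The gap is easy to repair, and doing so recovers the paper's own argument. What you actually need is not an extension but only a lower bound, and the formula
\[
\lambda = H + \frac{\widehat{\xi}(H)^2}{4(N-H)^2}\;\geq\; H
\]
already gives $\lambda\geq H$ pointwise on all of $M-C$, without any boundary analysis. Combine this with: (i) $H$ is smooth on $M$, $C$ is compact, and $H|_C>0$, so there is an open neighborhood $V\supset C$ and a $\delta>0$ with $H|_V>\delta$; hence $\lambda\geq H>\delta$ on $V\cap(M-C)$. (ii) $M-V$ is a compact subset of $M-C$ and $\lambda$ is continuous and positive there, so $\inf_{M-V}\lambda\geq\delta'>0$. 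Take $\lambda_{\min}=\min(\delta,\delta')$. This two-region argument is exactly the paper's proof; it sidesteps the question of whether $\lambda$ stays bounded near $C$, which in the $1$-QC setting it need not.
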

\begin{proof}
As $H$ is smooth on $M$ and $C$ is compact, there exists $\delta >0$ and 
an open neighborhood $V$ of $C$ in $M$ 
such that $H|_V > \delta$. On the other hand $\lambda$ is smooth on $M-C$ and 
$M-V$ is compact hence $\inf_{p\in M-V} \lambda(p)\geq \delta' >0$. We also have 
$\lambda(p)\geq H(p)\geq \delta$, if $p\in V$, so that we can take $\lambda_{\min}=\min(\delta,\delta')$. 
\end{proof}

\begin{lemma}\label{upperlambda}
There exists some constant $\lambda_{\max}(\varepsilon)$ such that  whenever $p\not\in W_{\varepsilon}$, then 
\[ \lambda(p)\leq \lambda_{\max}(\varepsilon).\]
\end{lemma}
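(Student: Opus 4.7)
The plan is to read off the bound directly from the defining formula
\[
\lambda = H + \frac{\|\mathrm{grad}\,H\|^{2}}{4(H-N)^{2}},
\]
using that (i) the numerator is globally controlled on the compact manifold $M$, and (ii) the denominator is bounded away from zero on the complement of $W_{\varepsilon}$.

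For (i), by Lemma \ref{continuous} the functions $H$ and $N$ admit smooth extensions to all of $M$. Since $M$ is compact, $H$ and $\|\mathrm{grad}\,H\|^{2}$ attain finite maxima on $M$; call them $H_{\max}$ and $G_{\max}$ respectively. These bounds do not depend on $\varepsilon$.

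For (ii), I invoke the second conclusion of Lemma \ref{saturated}: for each $\varepsilon>0$ sufficiently small there exists $\delta=\delta(\varepsilon)>0$ with $W_{\varepsilon}\supset V_{\delta}(C)$. Taking complements,
\[
M\setminus W_{\varepsilon}\ \subset\ M\setminus V_{\delta}(C)\ =\ \{p\in M:|H(p)-N(p)|\ge \delta\}.
\]
In particular every such $p$ lies in $M-C$ (so $\lambda(p)$ is defined via the formula above), and $(H(p)-N(p))^{2}\ge \delta^{2}$.

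Combining (i) and (ii), for every $p\notin W_{\varepsilon}$,
\[
\lambda(p)\ \le\ H_{\max} + \frac{G_{\max}}{4\delta(\varepsilon)^{2}}\ =:\ \lambda_{\max}(\varepsilon),
\]
which is the claimed uniform bound. I do not foresee an obstacle; the only substantive ingredient is the existence of $\delta(\varepsilon)$ from Lemma \ref{saturated}, which guarantees that moving strictly outside the saturated neighborhood $W_{\varepsilon}$ keeps us a definite distance away from the isotropic locus $C$ in terms of $|H-N|$, and this is precisely what prevents the denominator of $\lambda$ from collapsing.
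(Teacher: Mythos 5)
Your proof is correct and follows essentially the same route as the paper: bound the numerator by compactness of $M$ and smoothness of $H$, and bound the denominator away from zero using the $\delta(\varepsilon)$ from Lemma \ref{saturated}, since $M\setminus W_{\varepsilon}\subset M\setminus V_{\delta}(C)$. The only difference is that you spell out the intermediate steps more explicitly (and correctly include the square on $\|\mathrm{grad}\,H\|$, which the paper's displayed inequality drops as an apparent typo).
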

\begin{proof}
We have 
\[ \lambda(p)= H(p)+\frac{\parallel {\rm grad} (H)_p\parallel}{4(H(p)-N(p))^2}
\leq \sup_{p\in M} H(p)+ \frac{1}{4\delta^2} \sup _{p\in M} \parallel {\rm grad} H|_p\parallel\]
where $\delta=\delta(\varepsilon)$ is given in Lemma \ref{saturated}. 
\end{proof}

\begin{lemma}\label{caps}
Assume that the compact $n$-manifold $X$  can be isometrically immersed into  
$\mathbb H_{\kappa}^{n+1}$ so that its boundary $\partial X$ goes onto 
the round sphere $S_{\lambda}$. Then the volume of $X$ 
is greater  than or equal to  the volume of the  
standard $n$-ball $B_{\lambda}$  with boundary $S_{\lambda}$ 
lying in an  $n$-plane 
$\mathbb H_{\kappa}^{n}$ embedded in  $\mathbb H_{\kappa}^{n+1}$.  
\end{lemma}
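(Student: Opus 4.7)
The plan is to use nearest-point projection onto the totally geodesic $n$-plane $\Pi \cong \mathbb H_\kappa^n$ containing $S_\lambda$, combined with the area formula for Lipschitz maps. Write $f \colon X \to \mathbb H_\kappa^{n+1}$ for the isometric immersion and $\pi \colon \mathbb H_\kappa^{n+1} \to \Pi$ for the nearest-point projection, and set $g := \pi \circ f \colon X \to \Pi$.

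First I would verify that $g$ is $1$-Lipschitz. This reduces to the assertion that $\pi$ is $1$-Lipschitz, which holds because $\mathbb H_\kappa^{n+1}$ is a CAT$(0)$ space (being of nonpositive sectional curvature) and $\Pi$ is a closed geodesically convex subset; nearest-point projection onto a closed convex subset of any CAT$(0)$ space is $1$-Lipschitz. Since $f$ is an isometric immersion, the composite $g$ inherits the $1$-Lipschitz property, and consequently its Jacobian satisfies $|J(g)| \leq 1$ almost everywhere.

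Next I would show via a topological degree argument that $g(X) \supseteq B_\lambda$. By hypothesis, $f|_{\partial X} \colon \partial X \to S_\lambda$ is an isometric surjection of a closed $(n-1)$-manifold onto a round sphere, hence a finite covering map of nonzero $\mathbb Z/2$-degree. Suppose some $y_0 \in B_\lambda$ were not in $g(X)$; then $g$ would factor through $\Pi \setminus \{y_0\}$, which deformation retracts onto an $(n-1)$-sphere $\Sigma$ encircling $y_0$. The composite $\partial X \to S_\lambda \hookrightarrow \Pi \setminus \{y_0\} \to \Sigma$ would have nonzero degree, because the winding number of $S_\lambda$ about $y_0 \in B_\lambda$ is $\pm 1$; but this composite extends continuously over $X$, forcing it to be null-homotopic, a contradiction. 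Hence $\#g^{-1}(y) \geq 1$ for every $y \in B_\lambda$.

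Combining these ingredients, the area formula for Lipschitz maps yields
\[
{\rm vol}(X) \;\geq\; \int_X |J(g)|\, d{\rm vol}_X \;=\; \int_\Pi \#g^{-1}(y)\, d{\rm vol}_\Pi \;\geq\; \int_{B_\lambda} 1\, d{\rm vol}_\Pi \;=\; {\rm vol}(B_\lambda),
\]
as claimed. The main subtlety I anticipate lies in the degree computation when $\partial X$ is non-orientable or disconnected; passing to $\mathbb Z/2$-degree handles non-orientability, and — since $S^{n-1}$ is simply connected for $n \geq 3$ — any isometric cover of $S_\lambda$ is a disjoint union of isometric copies of $S_\lambda$, so each component contributes positively to the total degree and there is no cancellation.
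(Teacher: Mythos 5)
Your argument is essentially the paper's: the orthogonal (nearest-point) projection onto the geodesic $n$-plane $\Pi$ is distance-nonincreasing, so $\mathrm{vol}(X)$ dominates the measure of the projected image, and a homological/degree argument shows that the image of $\partial X$ bounds in a way that forces the projection to cover all of $B_\lambda$; you have merely made the Lipschitz step explicit via the area formula and phrased the homology step via winding number and mapping degree. One small caveat: the closing remark that ``each component contributes positively to the total degree'' is not literally correct (over $\mathbb Z/2$ there is no positivity, and over $\mathbb Z$ distinct boundary spheres can cancel), but this is immaterial here since in the paper's application $\partial X$ is a single curvature leaf and hence connected.
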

\begin{proof}
As the orthogonal projection  $\mathbb H_{\kappa}^{n+1}\to \mathbb H_{\kappa}^{n}$  
is distance-nonincreasing, the measure of $X$ is greater 
than the measure of its projection. Now, the image of $X$ is a homological 
$n$-cycle with  boundary $S_{\lambda}$. Then,  this $n$-cycle cannot miss a point  of 
$B_{\lambda}$ because the inclusion of $S_{\lambda}$ into $\mathbb H_{\kappa}^{n}-\{{\rm pt}\}$ 
is a homology isomorphism. 
\end{proof}

In particular, Lemma \ref{caps} shows that the 
$n$-dimensional Lebesgue measure of $X$ is uniformly bounded 
from below by  the volume of $B_{\lambda}$.   Recall that a round sphere $S_{\lambda}$ is a metric hypersphere 
in a totally geodesic $\mathbb H^n_{\kappa}\subset \mathbb H^{n+1}_{\kappa}$. Now a hypersphere 
of radius $d$ in  $\mathbb H^n_{\kappa}$, with $\kappa \neq 0$, has principal curvatures 
$\frac{\sqrt{\kappa}}{\tanh(\sqrt{\kappa}d)}$ and hence sectional curvature: 
\[ \lambda =\kappa \left (\frac{1- \tanh^2(\sqrt{\kappa}d)}{\tanh^2(\sqrt{\kappa}d)}\right).\]
Thus its volume of $B_{\lambda}$ is a function 
$C_1(\lambda,\kappa,n) >0$  decreasing as a function of 
the intrinsic curvature $\lambda$ for all $\kappa\geq 0$, namely such that:  
\[\lim_{\lambda\to \infty}C_1(\lambda,\kappa,n)=0, \; 
 \lim_{\lambda\to 0}C_1(\lambda,\kappa,n)=\infty. \]

\subsection{Compactness of $N_i(\varepsilon)$}
We first prove:
\begin{proposition}\label{compact}
Assume that $\lambda(p) \geq c >0$ for $p\in M-C$. 
Then each  $N_i(\varepsilon)$ is compact. 
\end{proposition}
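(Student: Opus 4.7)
The plan begins with a trivial topological observation: $\overline{W_\varepsilon}$ is closed in the closed manifold $M$, hence compact, so each connected component $N_i(\varepsilon)$ is a closed subset of $M$ and automatically compact as a topological subspace. The substantive claim, which genuinely requires the hypothesis $\lambda \ge c > 0$, is that $N_i(\varepsilon)$ is a bona fide compact manifold with boundary, i.e.\ that $\partial N_i(\varepsilon)$ has only finitely many components, so that $N_i(\varepsilon)$ is the kind of compact piece one can use in the capping-off surgery described in the outline.

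By the saturation property from Lemma \ref{saturated}, the frontier $\overline{W_\varepsilon}\setminus W_\varepsilon$ is a union of curvature leaves, and by Lemma \ref{compactification} each such leaf is a round sphere. A boundary leaf $L \subset \partial N_i(\varepsilon)$ lies outside $W_\varepsilon$, so Lemma \ref{upperlambda} together with the hypothesis yields $c \le \lambda(L) \le \lambda_{\max}(\varepsilon)$. Since $\alpha^2 = \lambda - H$ with $H$ smooth on $M$ (from Lemma \ref{extension-lambda}), the principal curvature of $L$ in $M$ is also uniformly controlled, so $L$ has intrinsic diameter bounded uniformly above and below. To rule out infinite families of boundary spheres, I would suppose that $\{L_j\} \subset \partial N_i(\varepsilon)$ is infinite; being disjoint compact submanifolds of uniformly bounded diameter in the compact manifold $M$, they accumulate at some $p \in M$.

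Through the equivariant isometric immersion $\Phi:\widetilde M \to \mathbb H_\kappa^{n+1}$ from Proposition \ref{immerse}, suitable lifts of the $L_j$ become embedded round spheres in $\mathbb H_\kappa^{n+1}$ with radius in a fixed compact interval; after passing to a subsequence of lifts converging to a lift of $p$, these round spheres converge to a round sphere $L_\infty$ of positive radius, whose projection is a genuine limit curvature leaf through $p \in \overline{M-C}$. The strong global Reeb stability theorem, as used in the proof of Proposition \ref{compactleaf}, then supplies a local product structure for the curvature foliation in a tubular neighborhood of $L_\infty$, in which only finitely many $L_j$ can fit, producing the desired contradiction. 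The main obstacle lies in this last step: one must extract a non-degenerate limit sphere using hyperbolic rigidity of round spheres and then exploit the local fibration structure of the foliation to preclude accumulation. The positivity $\lambda \ge c$ is essential both to bound sphere diameters from below and to guarantee that the limit sphere does not collapse to a point.
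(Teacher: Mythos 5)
Your framing is right: the trivial point is that $N_i(\varepsilon)$ is closed in the compact set $\overline{W_\varepsilon}$, and the real assertion is that it is a manifold with boundary, i.e.\ that $\partial N_i(\varepsilon)$ has only finitely many leaves. But your argument for that finiteness does not close. The difficulty is precisely the step you flag as the ``main obstacle'': after extracting a non-degenerate limit leaf $L_\infty$ through an accumulation point $p\in M-C$ and applying Reeb stability to get a foliated product neighborhood $L_\infty\times(-1,1)$, there is no contradiction from the mere fact that infinitely many $L_j$ lie inside it. In a compact-leaf codimension one foliation a local fibration over an interval always contains a continuum of compact leaves, so nothing stops infinitely many boundary leaves of $N_i(\varepsilon)$ from accumulating onto $L_\infty$ while alternating with slices of $W_\varepsilon$ and necks of $M-W_\varepsilon$. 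One would need an additional structural constraint (for instance an explicit description of the frontier $\partial W_\varepsilon$), which Lemma \ref{saturated} as stated does not supply. So the last sentence ``only finitely many $L_j$ can fit'' is unjustified, and the argument does not go through as written.

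The paper takes a genuinely different route that avoids the limit analysis altogether. First, by Grushko's theorem all but finitely many boundary spheres of $N_i(\varepsilon)$ are separating and bound, on the side away from $N_i(\varepsilon)$, pairwise disjoint compact submanifolds $M_j$; van Kampen plus Grushko again forces all but finitely many $M_j$ to be simply connected. A simply connected $M_j$ lifts into $\widetilde M$ and hence immerses isometrically into $\mathbb H_\kappa^{n+1}$ with its boundary going onto a round sphere $S_\lambda$ whose curvature is bounded above by $\lambda_{\max}(\varepsilon)$ (Lemma \ref{upperlambda}). Lemma \ref{caps} then gives a uniform lower bound ${\rm vol}(M_j)\ge C_1(\lambda_{\max}(\varepsilon),\kappa,n)>0$, and since the $M_j$ are disjoint and $M$ has finite volume, only finitely many can occur. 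In short, the paper converts the problem into a volume-packing estimate driven by the group-theoretic finiteness of free factors, whereas you tried to derive a contradiction from the local dynamics of the foliation near an accumulation leaf; the latter fails because local Reeb stability is compatible with accumulation of compact leaves.
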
 
\begin{proof}
Such a  component $N_i(\varepsilon)$ is compact, unless its boundary $\partial N_i(\varepsilon)$ consists of   
infinitely many $(n-1)$-spheres. 
All but finitely many such $(n-1)$-spheres should be separating, by 
Grushko's theorem since each non-separating $(n-1)$-sphere contributes a  free factor $\mathbb Z$ 
to $\pi_1(M)$.

Now, let $M_j$ be the compact connected  submanifolds of $M$ bounded 
by the separating $(n-1)$-spheres in $\partial N_i(\varepsilon)$. By van 
Kampen's  theorem and Grushko's theorem all but finitely many $M_j$ 
should be simply connected. Since $M_j$ is simply connected, the inclusion into $M$ lifts to 
an embedding of $M_j$ into $\widetilde{M}$, and 
thus $M_j$ is isometrically immersed into $\mathbb H_{\kappa}^{n+1}$ such that 
its boundary goes onto some $S_{\lambda}$. 

Now, each  $\partial M_j$   is a 
curvature leaf  isometrically embedded 
into $\mathbb H_{\kappa}^{n+1}$ as a round $(n-1)$-sphere $S_{\lambda}$ 
of curvature $\lambda$. 
From lemma \ref{caps} the 
$n$-dimensional measure of $M_j$ is uniformly bounded from below by 
$C_1(\lambda,\kappa,n) >0$. 

Since $\sup_{p\in \partial W}\lambda \leq \lambda_{\max}(\varepsilon) < \infty$  by Lemma \ref{upperlambda} 
the $n$-dimensional measure of $M_j$ is uniformly bounded from below by 
$C_1(\lambda_{\max}(\varepsilon),\kappa,n) >0$. Since $M$ has finite volume there are only 
finitely many pairwise disjoint domains $M_j$ and hence only finitely many  components of $\partial N_i(\varepsilon)$.
\end{proof}
\begin{remark}\label{separating}
The proof above shows that the 
number of separating  boundary components which separate 
simply connected  domains is uniformly bounded by 
$\frac{{\rm vol}(M)}{C_1(\sup_{p\in M-W_{\varepsilon}}\lambda,\kappa,n)}$. 
\end{remark}

\subsection{Proof of Proposition \ref{essential}}
As above, all but finitely many $N_i(\varepsilon)$ are simply connected, by van Kampen's and Grushko's theorems.

\begin{lemma}\label{sc}
If $N_i(\varepsilon)$ is simply connected then it is diffeomorphic to a 
sphere with  finitely many disjoint open balls removed.  
\end{lemma}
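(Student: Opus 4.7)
The plan is to cap off each boundary sphere of $N_i(\varepsilon)$ with a standard hyperbolic disk to produce a closed simply connected manifold $\widehat{N}_i(\varepsilon)$, then invoke the Nikolaev rigidity result mentioned in the outline (\S 4.1) to identify $\widehat{N}_i(\varepsilon)$ with a spherical space form. Because the capped manifold is simply connected it must be diffeomorphic to $S^n$, and removing the open caps recovers the desired description of $N_i(\varepsilon)$.

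First I would use the fact that $N_i(\varepsilon)$ has only finitely many boundary components, by Proposition \ref{compact}. Since $N_i(\varepsilon)$ is simply connected, it lifts isometrically to $\widetilde{M}$, and Proposition \ref{immerse} provides an isometric immersion into $\mathbb H_{\kappa}^{n+1}$ under which every boundary leaf is carried to a round $(n-1)$-sphere $S_{\lambda}$ with $\lambda\in[c,\lambda_{\max}(\varepsilon)]$ (by Lemma \ref{upperlambda}). For each boundary sphere I would glue in the standard hyperbolic cap $B_{\lambda}\subset \mathbb H_{\kappa}^{n}\subset \mathbb H_{\kappa}^{n+1}$ from Lemma \ref{caps}. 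Because the boundary curvature leaves are totally umbilical with principal curvatures $\pm\alpha$ and the caps are hyperspheres with the same extrinsic geometry, the second fundamental forms match across the gluing, so the resulting closed manifold $\widehat{N}_i(\varepsilon)$ carries a Riemannian metric of at least $C^1$ regularity, of bounded diameter and bounded volume (the diameter control coming from the immersion into $\mathbb H_{\kappa}^{n+1}$ together with $\lambda\geq c$).

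Next, I would check that the integral anisotropy of $\widehat{N}_i(\varepsilon)$ can be made arbitrarily small by shrinking $\varepsilon$. On the caps the sectional curvature is constant, while on $N_i(\varepsilon)\subset W_{\varepsilon}$ the sectional curvature of any 2-plane making an angle $\theta$ with $\xi$ equals $H\sin^2\theta+N\cos^2\theta$ with $|H-N|<\varepsilon$, so the full sectional curvature function deviates from a constant by $O(\varepsilon)$ pointwise, and hence in any $L^p$ sense relative to the controlled volume. Applying Nikolaev's theorem (cited in the outline of \S 4.1) yields that $\widehat{N}_i(\varepsilon)$ is diffeomorphic to a closed spherical space form. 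Finally, van Kampen's theorem applied to $\widehat{N}_i(\varepsilon)=N_i(\varepsilon)\cup(\text{caps})$ glued along simply connected $(n-1)$-spheres (using $n\geq 3$) shows $\widehat{N}_i(\varepsilon)$ is simply connected, forcing $\widehat{N}_i(\varepsilon)\cong S^n$. Removing the finitely many open caps then exhibits $N_i(\varepsilon)$ as $S^n$ with finitely many disjoint open disks removed.

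The main obstacle is the careful deployment of Nikolaev's theorem: one has to ensure that the glued metric on $\widehat{N}_i(\varepsilon)$ has sufficient regularity to fall within its hypotheses, and that the diameter/volume bounds and integral anisotropy estimates hold uniformly. The matching of second fundamental forms at the gluing and the uniform bounds $\lambda_{\min}\leq\lambda\leq\lambda_{\max}(\varepsilon)$ on boundary leaves should yield the regularity and geometric control, while the freedom to take $\varepsilon$ as small as desired — exploiting that $N_i(\varepsilon)\subset V_{\varepsilon}(C)$ and hence $|H-N|<\varepsilon$ — provides the anisotropy bound.
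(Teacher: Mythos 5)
Your approach is genuinely different from the paper's, and it has a concrete gap. The paper's proof of Lemma \ref{sc} uses no metric rigidity at all: since $N_i(\varepsilon)$ is compact and simply connected it lifts into $\widetilde{M}$ and immerses into $\mathbb H_{\kappa}^{n+1}$; any ball in $\mathbb H_{\kappa}^{n+1}$ is conformally a Euclidean ball, so $N_i(\varepsilon)$ is a compact conformally flat manifold with spherical boundary, and Pinkall's \emph{conformal} procedure for gluing balls along boundary spheres (\cite{Pi}, section 2) produces a closed, simply connected, conformally flat manifold, to which Kuiper's theorem applies directly. There is no second fundamental form to match, no regularity to smooth out, and no volume/diameter/anisotropy bounds to track. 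Nikolaev's theorem is reserved in the paper for Proposition \ref{isotrop}, where the pieces may be non-simply-connected and Kuiper is unavailable.

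The gap in your proposal is the choice of cap. You glue in the flat $n$-disk $B_{\lambda}\subset\mathbb H_{\kappa}^n$ from Lemma \ref{caps}. That disk is totally geodesic in $\mathbb H_{\kappa}^{n+1}$, so its intrinsic sectional curvature is $-\kappa$; by Gauss' equation, the second fundamental form of the round sphere $S_{\lambda}=\partial B_{\lambda}$ computed inside $B_{\lambda}$ has norm $\sqrt{\lambda+\kappa}$, whereas the second fundamental form of the curvature leaf $S$ computed inside $N_i(\varepsilon)$ has norm $|\alpha|=\sqrt{\lambda-H}$. These do not agree unless $H=-\kappa$, so the glued metric has a corner along $S$ and is not $C^1$; the phrase ``the caps are hyperspheres with the same extrinsic geometry'' is inconsistent with $B_{\lambda}$ being a flat disk. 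The cap that does glue $C^1$-smoothly is the \emph{hyperspherical} cap $\mathcal H^+(S)$, a piece of a round hypersphere in $\mathbb H_{\kappa}^{n+1}$ tangent to the image of $N_i(\varepsilon)$ along $S$; its existence (only when $H(S)>0$) is precisely Lemma \ref{cap}, proved later. Even with the correct cap, the Nikolaev route requires uniform volume, diameter and anisotropy control over the family as $\varepsilon\to 0$ (established only in Lemma \ref{niko}), whereas Kuiper applies to each capped piece individually with no quantitative hypotheses. So your argument can be repaired by replacing $B_{\lambda}$ with $\mathcal H^+(S)$ and importing the estimates of Lemma \ref{niko}, but this is substantially heavier than the paper's Pinkall--Kuiper argument and relies on the positivity hypotheses of Theorem \ref{lambdapositive} in a way that Lemma \ref{sc}'s proof does not.
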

\begin{proof}
Since $N_i(\varepsilon)\subset M$ is 
compact and simply connected it can be isometrically immersed into 
$\mathbb H_{\kappa}^{n+1}$. Moreover, the boundary 
$\partial N_i(\varepsilon)$ consists of finitely many $(n-1)$-spheres.
Since $N_i(\varepsilon)$ is compact and any ball in  $\mathbb H_{\kappa}^{n+1}$ 
is conformally equivalent to a ball in  the Euclidean space 
$\mathbb R^{n+1}$ it follows that $N_i(\varepsilon)$ can be conformally immersed into $\mathbb R^{n+1}$.
There is then a standard procedure of gluing balls 
along the  boundary within the realm of conformally flat 
manifolds (see \cite{Pi}, section 2). We obtain then a conformally flat 
closed manifold $\widehat{N_i(\varepsilon)}$ 
which is simply connected. Kuiper's theorem  (\cite{Ku1})
implies that $\widehat{N_i(\varepsilon)}$ 
is diffeomorphic to a sphere, and hence the lemma.   
\end{proof}

\begin{lemma}
There are only finitely many components 
$N_i(\varepsilon)$ other than cylinders. 
\end{lemma}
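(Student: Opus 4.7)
The plan is to analyze the dual graph of the decomposition of $M$ into pieces $N_i(\varepsilon)$ and necks, and to combine a Grushko-type argument with a volume estimate. Let $\Gamma$ be the graph whose vertices are the components $N_i(\varepsilon)$ and whose edges are the necks (the components of $M-W_\varepsilon$). Each neck is a compact spherical cylinder whose two end-spheres lie in $\partial W_\varepsilon$ and are boundary spheres of (not necessarily distinct) $N_i(\varepsilon)$'s; in particular $\Gamma$ is connected because $M$ is. Since $\pi_1(S^{n-1})=1$ for $n\geq 3$, van Kampen's theorem applied to this decomposition expresses $\pi_1(M)$ as a free product of $\pi_1(\Gamma)$ with the groups $\pi_1(N_i(\varepsilon))$. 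Grushko's theorem together with the finite generation of $\pi_1(M)$ then bounds both the first Betti number $b_1(\Gamma)$ and the number of indices $i$ for which $\pi_1(N_i(\varepsilon))$ is non-trivial.

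I next dispose of the simply connected components. By Lemma \ref{sc} each such $N_i(\varepsilon)$ is diffeomorphic to $S^n$ with finitely many disjoint open disks removed, and is a cylinder precisely when that collection consists of exactly two disks. When $N_i(\varepsilon)$ has a single boundary sphere it is a disk; being simply connected, it lifts isometrically to $\widetilde{M}$, and Proposition \ref{immerse} then provides an isometric immersion of $N_i(\varepsilon)$ into $\mathbb H^{n+1}_\kappa$ sending its boundary onto a round sphere $S_\lambda$. Combining Lemmas \ref{caps} and \ref{upperlambda} gives
\[
{\rm vol}(N_i(\varepsilon)) \;\geq\; C_1(\lambda_{\max}(\varepsilon),\kappa,n) \;>\; 0,
\]
so the finiteness of ${\rm vol}(M)$ implies that only finitely many such disk components occur.

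It remains to bound the number of simply connected $N_i(\varepsilon)$ with at least three boundary spheres. Writing $V_k$ for the number of vertices of $\Gamma$ of degree $k$, the handshake identity $2|E|=\sum_k k V_k$ together with the Euler relation $|E|-|V|+1=b_1(\Gamma)$ combine into
\[
\sum_{k\geq 3}(k-2)\,V_k \;=\; V_1 + 2\,b_1(\Gamma) - 2,
\]
and hence $V_{\geq 3}\leq V_1 + 2\,b_1(\Gamma) - 2$. The right-hand side is finite: $b_1(\Gamma)$ by the first paragraph, while $V_1$ is the sum of the finite number of simply connected disks just handled and of the finitely many non-simply-connected one-boundary pieces. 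Since every non-cylinder component belongs to one of the three finite classes considered, the lemma follows. The main obstacle in this plan is the case of at least three boundary spheres, where Lemma \ref{caps} does not directly produce a useful lower bound on ${\rm vol}(N_i(\varepsilon))$; the graph-theoretic identity above circumvents this by converting the problem into counting vertices by degree and leveraging the finiteness of $V_1$ and $b_1(\Gamma)$.
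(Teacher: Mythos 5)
Your proof is correct and follows essentially the same strategy as the paper: Grushko's theorem bounds $b_1(\Gamma)$ and the number of non-simply-connected pieces, the volume estimate from Lemmas \ref{caps} and \ref{upperlambda} bounds the disk pieces, and the Euler-characteristic/handshake identity on the dual graph bounds the pieces with at least three boundary spheres. Your explicit identity $\sum_{k\geq 3}(k-2)V_k = V_1 + 2b_1(\Gamma) - 2$ is a cleaner and more precise way of stating what the paper phrases informally as ``a trivalent graph has at least that many free generators of its fundamental group as vertices''.
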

\begin{proof}
We already saw above that there are finitely 
many components $N_i(\varepsilon)$ with one boundary component 
or not simply connected. It remains to show that we cannot have 
infinitely many holed spheres $N_i(\varepsilon)$ 
with at least three holes. Assume the contrary.  
Then all but finitely many of them should be connected  among themselves 
using cylinders. But a trivalent graph has at least 
that many free generators of its fundamental group as vertices. 
This implies that $\pi_1(M)$ has a free factor of 
arbitrarily large number of generators, contradicting the 
Grushko theorem. 
\end{proof}

The only possibility left is to 
have infinitely many manifolds $N_i(\varepsilon)$ which are 
diffeomorphic to cylinders $S^{n-1}\times [0,1]$, to be called {\em tubes}. 
Only finitely many of these tubes can be non-separating. 
We say that two separating tubes $N_i(\varepsilon)$ and $N_j(\varepsilon)$ 
are equivalent if  the component of 
$M-(N_i(\varepsilon)\cup N_j(\varepsilon))$ joining 
the boundary components of the tubes is a cylinder, i.e. the tubes are isotopic in $M$.

A separating tube is {\em inessential} if one of the two 
components of its complement is a ball and {\em essential} otherwise. 
There are also only finitely many equivalence classes  of separating 
essential tubes  $N_i(\varepsilon)$. 

In an infinite family of essential pairwise equivalent $N_i(\varepsilon)$ we have necks $C_i$
connecting them in a chain. Their union is then an open spherical cylinder.
By Lemma \ref{compactification} its closure is a closed 
spherical cylinder or a ball. Moreover, every $N_i(\varepsilon)$ contains 
a round sphere $X_i\subset N_i(\varepsilon)$ obtained as a limit 
curvature leaf. Then, the limit $X_{\infty}$ of $X_i$ exists and is a 
round sphere possibly degenerate, by the arguments from the proof of Lemma \ref{poles} and \cite{DDM}. On the   
other hand $X_{\infty}\subset C$ as $H(p)=N(p)$ for all $p\in X_i$. 
Thus $X_{\infty}\subset W_{\varepsilon}$ and according to Lemma \ref{saturated}
sufficiently closed $X_i$ are also contained in $W_{\varepsilon}$. This 
contradicts the fact that there is a neck $C_i$ separating $X_i$ 
from $X_{i+1}$ which is disjoint from $W_{\varepsilon}$. 

If we have an infinite family of inessential  tubes $N_i(\varepsilon)$ accumulating to a point $q$ of $C$, then 
all but finitely many will be contained within the neighborhood $N_j(\varepsilon)$ containing $q$, leading again to a contradiction.  
This proves Proposition \ref{essential}.

\subsection{The geometry of $N_i(\varepsilon)$ in the case $H|_{M-C}>0$} 
We give here a simple argument which permits to conclude the proof when $H>0$. 
\begin{lemma}
Assume that $H >0$ and $n\geq 3$. Then  $N_i(\varepsilon)$ is conformally equivalent 
to a spherical space form with boundary. 
\end{lemma}
\begin{proof}
We have $|N(p)-H(p)| \leq \varepsilon$, for 
any $p\in N_i(\varepsilon)$. 
As $H$ was supposed to be positive on $M$ there exists some $K$ 
such that $\min(H(p), N(p)) \geq K > 0$ for any $p\in N_i(\varepsilon)$. 

Consider a connected component $V_i$ of $\pi^{-1}(N_i(\varepsilon))$ , 
where $\pi:\widetilde{M}\to M$ is 
the universal covering projection. 
Since the sectional curvature of $V_i$ is bounded from 
below by $K >0$, the Bonnet-Myers theorem  implies that 
$V_i$ has bounded diameter and hence it is 
compact. Each boundary component of $V_i$ has constant positive curvature and hence 
is diffeomorphic to a sphere. 
This implies that $\pi_1(V_i)$ is a free factor of $\pi_1(\widetilde{M})=1$ 
and so $V_i$ must be simply connected. 
Then the conformal gluing of balls 
described in (\cite{Pi}, section 2) along the boundary components 
of $V_i$ produces a closed simply connected conformally flat manifold 
$\widehat{V_i}$ which can be immersed into $\mathbb R^{n+1}$.  
Therefore by Kuiper's theorem  (\cite{Ku1})  $\widehat{V_i}$ is conformally equivalent to a sphere.
We derive that  $V_i$ is conformally equivalent to a holed sphere, since it is obtained from a sphere 
by deleting out the interiors of several disjoint round balls.  

Now, notice that any closed loop in $N_i(\varepsilon)$ can be lifted 
to a path in $V_i$, so that the action of the subgroup 
$\pi_1(N_i(\varepsilon))$ of $\pi_1(M)$ as a deck transformation subgroup 
acting on $\widetilde{M}$ keeps $V_i$ globally invariant. 
Thus $V_i$ is endowed with a free action by $\pi_1(N_i(\varepsilon))$. 
Since $V_i$ is compact $\pi_1(N_i(\varepsilon))$ must be finite. 

The action of $\pi_1(N_i(\varepsilon))$ on the holed sphere 
$V_i$ is by conformal diffeomorphisms. Then Liouville's theorem (see \cite{Kul3}, Thm.3.1) implies 
that each one of these diffeomorphisms is the restriction of some global conformal diffeomorphism 
of the round sphere $S^{n}$, as $n\geq 3$. The group generated by the  global conformal diffeomorphisms of $S^n$ is still isomorphic to $\pi_1(N_i(\varepsilon))$, as any conformal diffeomorphism which is identity on an open subset of $S^n$ 
is the identity.  The spherical metric induces then a spherical space form structure on $N_i(\varepsilon)$. 
Moreover,  $N_i(\varepsilon)$ is obtained from the quotient of $S^n$ by the finite group $\pi_1(N_i(\varepsilon))$
by removing several open balls. 
\end{proof}

\begin{remark}
Observe that the immersability property and 
the strict positive curvature enables 
us to find an elementary proof disposing 
of the differentiable sphere theorem under 
a pointwise pinched curvature assumption. 
\end{remark}

\begin{remark}
Note that the  $\pi_1(N_i(\varepsilon))$-action on $S^n$ might possibly not be free.
Its point stabilizers might be finite, in which case the quotient $S^n/\pi_1(N_i(\varepsilon))$ is a so-called orbifold. 
The proof actually shows that $M$ is conformally equivalent 
to a connected sum of spherical space forms and classical Schottky 
manifolds (see \cite{Pi}). 
\end{remark}

\begin{remark}
The proof of Theorem \ref{arbitrary} shows  that closed manifolds having 
positive horizontal curvature for some $1$-QC 
{\em analytic} metric should be conformally equivalent to either 
the $n$-sphere, $S^1\times S^{n-1}$ or $S^1\times_{-1}S^{n-1}$. 
Here $S^1\times_{-1}S^{n-1}$ denotes the non-orientable $S^{n-1}$-bundle 
over $S^1$ with structure group $O(n)$, i.e. obtained when the monodromy map 
is a reflection in a hyperplane. 
\end{remark}

\subsection{Proof of Theorem \ref{lambdapositive}} 
The more general situation when $H|_C>0$ is treated along the same lines as before.
A small neighborhood of isotropic points has boundary consisting of spheres. 
We wish to glue along each boundary component a ball by identifying the corresponding boundary spheres 
by means of some (gluing) diffeomorphism. This operation will be called {\em capping off} boundary spheres. 
The homeomorphism type of the closed manifold obtained by capping off its boundary components is well-defined,  
 namely independent on the gluing diffeomorphisms. We should stress that, in contrast,  the diffeomorphism type of 
the resulting manifold is determined by the classes of the gluing diffeomorphisms  
in the group of sphere diffeomorphisms up to isotopy, which can be non-trivial -  for instance when $n=7$.
However, if the boundary spheres are endowed with Riemannian metrics of constant curvature metrics 
and the gluing diffeomorphism is an isometry, then the capped off manifold is well-defined up to diffeomorphism, since 
orientation preserving isometries are isotopic.  
 
Our aim is to realize a  {\em geometric capping off}  using  spherical caps of constant curvature as balls and 
isometries as gluing diffeomorphisms. Both the manifold and the spherical caps will be isometrically immersed in 
$\mathbb H_{\kappa}^{n+1}$. The resulting manifold inherits a  $\mathcal C^1$ Riemannian metric structure, which is 
$\mathcal C^{\infty}$ outside the joint spheres. A slight perturbation of the metric will produce a $\mathcal C^{\infty}$-metric.   

Our key result below  shows that small neighborhoods of isotropic points 
are standard:

\begin{proposition}\label{isotrop}
If   $\varepsilon$ is small enough 
then  those $N_i(\varepsilon)$ which have positive sectional curvature are 
diffeomorphic to spherical space forms with several disjoint open balls 
removed. 
\end{proposition}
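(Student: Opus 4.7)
The plan is to construct, for each $N_i(\varepsilon)$ of positive sectional curvature, a closed manifold $\widehat{N_i(\varepsilon)}$ by capping off the boundary spheres with totally umbilic hyperbolic caps, then to verify that the resulting closed manifold satisfies uniform bounds on volume, diameter, and sectional curvature together with vanishingly small integral anisotropy as $\varepsilon\to 0$, and finally to invoke the theorem of Nikolaev referenced in the outline to conclude that $\widehat{N_i(\varepsilon)}$ is diffeomorphic to a spherical space form.

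First I would build the cap-off. By Proposition~\ref{essential}, $\partial N_i(\varepsilon)$ consists of finitely many round $(n-1)$-spheres, and by Lemma~\ref{compactification} and Proposition~\ref{immerse} each of these lifts to a round sphere $S_\lambda\subset \mathbb H_\kappa^{n+1}$ bounding a totally umbilic hyperbolic $n$-disk $B_\lambda$ in a geodesic hyperplane $\mathbb H_\kappa^n$. Gluing the corresponding $B_\lambda$ along each boundary component produces a closed topological manifold $\widehat{N_i(\varepsilon)}$ with a metric that is smooth away from the gluing loci and continuous across them. Since both sides are totally umbilic along the common round sphere (with the cap's principal curvature prescribed by $\lambda$ and the leaf's by $\pm\alpha$ from \S\ref{leafcurv}), standard collar-smoothing yields a smooth Riemannian metric differing from the original in a collar of arbitrarily small measure and with controlled $C^2$-perturbation.

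Next I would control the geometric invariants. The hypothesis that every tangent $2$-plane at an isotropic point has positive sectional curvature combined with smoothness of $H$ and $N$ on $M$ (Lemma~\ref{continuous}) and compactness of $C$ produces a uniform lower bound $K>0$ on the sectional curvature of $M$ in a neighborhood of $C$, and hence on $N_i(\varepsilon)$ provided $\varepsilon$ is small enough that $W_\varepsilon$ sits inside this neighborhood. By construction $|H-N|<\varepsilon$ on $N_i(\varepsilon)$, and the caps are of constant sectional curvature; hence the integral anisotropy of $\widehat{N_i(\varepsilon)}$ is $O(\varepsilon\cdot \mathrm{vol}(M))$. Bonnet--Myers applied with lower bound $K$ bounds $\mathrm{diam}(\widehat{N_i(\varepsilon)})$ uniformly. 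The volume of each cap is $C_1(\lambda,\kappa,n)$ with $\lambda$ bounded above by Lemma~\ref{upperlambda} and below via Lemma~\ref{caps} applied in $M$ itself; since the number of boundary components of $N_i(\varepsilon)$ is finite (Proposition~\ref{essential}), $\mathrm{vol}(\widehat{N_i(\varepsilon)})\leq \mathrm{vol}(M)+c$ for a uniform $c$.

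Applying Nikolaev's diffeomorphism theorem to the resulting family of closed manifolds with uniform bounds on volume, diameter, and positive sectional curvature and with integral anisotropy $O(\varepsilon)$, one obtains, for $\varepsilon$ sufficiently small, a diffeomorphism $\widehat{N_i(\varepsilon)}\cong S^n/\Gamma_i$ with $\Gamma_i\subset O(n+1)$ finite. Removing the finitely many open capping disks recovers $N_i(\varepsilon)$ as a spherical space form with several pairwise disjoint open disks excised, which is the claim. The main obstacle I expect is the verification that the capping-and-smoothing procedure produces metrics that genuinely satisfy the hypotheses of Nikolaev's theorem, since Nikolaev's estimates are sensitive to the $C^2$-geometry and one must ensure that the collar smoothing neither degrades the positive-curvature bound nor inflates the integral anisotropy beyond an $O(\varepsilon)$ error; matching the totally umbilic second fundamental forms of the cap and the leaf is the essential tool that makes this feasible.
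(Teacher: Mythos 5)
Your overall strategy (cap, control volume/diameter/curvature/anisotropy, invoke Nikolaev) is the paper's strategy, but the capping construction you propose does not work and this is the heart of the argument. You propose to glue in the totally geodesic $n$-disk $B_\lambda\subset\mathbb H_\kappa^n$ bounded by the round sphere $S_\lambda$. This is the disk used in Lemma~\ref{caps} to obtain a volume \emph{lower bound} by orthogonal projection, not an object tangent to the immersed collar. The collar $F$ is a canal hypersurface: its tangent spaces along $S$ are those of a hypersphere $\mathcal H(S)\subset\mathbb H_\kappa^{n+1}$, and in general they are \emph{not} tangent to the geodesic hyperplane $\mathbb H_\kappa^n$ containing $S_\lambda$. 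Consequently $F\cup_S B_\lambda$ has a genuine corner: the normal line fields of $F$ and of $B_\lambda$ disagree on $S$, and the second fundamental form of $S$ inside $F$ (totally umbilic with principal curvatures $\pm\alpha$, from \S\ref{leafcurv}) does not match the second fundamental form of $S_\lambda$ inside $B_\lambda$ (totally umbilic with principal curvature $\sqrt{\lambda+\kappa}$). Your remark that ``matching the totally umbilic second fundamental forms of the cap and the leaf is the essential tool'' is exactly right, but the matching fails for $B_\lambda$.

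This gap is not merely cosmetic: smoothing a corner necessarily produces a collar region of large second fundamental form, hence uncontrolled sectional curvature, and this defeats both the uniform bound $\mathrm{diam}^2\cdot\sup|K|<C_2$ and the $O(\varepsilon)$ integral anisotropy that Nikolaev's theorem requires. Worse, $B_\lambda$ has constant sectional curvature $-\kappa\leq 0$, while the collar is pinched near $H(S)>0$; so even a miraculously $C^1$ gluing would force the smoothing zone to interpolate across a curvature gap of size $H(S)+\kappa$, which does not shrink with $\varepsilon$. It also breaks your Bonnet--Myers step, since the capped manifold would contain a region of nonpositive curvature. The paper's Lemma~\ref{cap} replaces $B_\lambda$ by a hyperspherical cap $\mathcal H^+(S)$ lying on the unique hypersphere $\mathcal H(S)$ tangent to $F$ along $S$; the existence and the radius $d=\kappa^{-1/2}\,\mathrm{arctanh}\sqrt{\kappa/(H(S)+\kappa)}$ of this hypersphere come from a focal-point/Jacobi-field computation (here the assumption $H(S)>0$ is essential, which is precisely why the statement restricts to pieces of positive curvature). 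Since $\mathcal H(S)$ is tangent to $F$ and has constant curvature $H(S)>0$ --- matching the limiting curvatures of the collar --- the union is $C^1$ and the subsequent infinitely flat smoothing keeps $\sup|K|$ bounded by $\max(\sup_M|H|,\sup_M|N|)$ and keeps the anisotropy supported on $N_i(\varepsilon)$, hence $O(\varepsilon)$. Finally, the paper bounds the diameter not by Bonnet--Myers but by a direct count (using Grushko and Remark~\ref{separating}) of the finitely many boundary components, each contributing at most $2\pi d$; this is the correct route once the caps have been chosen properly.
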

We postpone the rather involved proof until the next section \ref{isotrop-proof}. 

\vspace{0.5cm}
{\em End of proof of Theorem \ref{lambdapositive}}. 
By hypothesis $\lambda >0$ on $M-C$ and $H|_C >0$.  From 
Lemma \ref{lowerlambda} $\lambda$ has a positive lower bound on $M-C$ and 
by Proposition \ref{essential} there are only finitely many $N_i(\varepsilon)$ and finitely many necks, which are spherical cylinders. 
By choosing $\varepsilon$ small enough we can assume that all $N_i(\varepsilon)$   have positive sectional 
curvature, because $N|_C=H|_C>0$ and $N_i(\varepsilon)$ are saturated by  Lemma \ref{saturated}. Proposition \ref{isotrop} above shows that 
$N_i(\varepsilon)$ are diffeomorphic to spherical space forms with several disjoint open balls removed. 
Thus $M$ can be obtained by gluing together the  holed spherical space forms and spherical cylinders along 
their boundary spheres, which correspond to attaching 1-handles.

\subsection{Proof of Proposition \ref{isotrop}}\label{isotrop-proof}
The saturation Lemma \ref{saturated} shows that 
given $\nu>0$ there exists  $\epsilon(\nu) >0$ small enough such that  
$|N(p)-H(p)| \leq  \nu $, for any $p\in N_i(\varepsilon)$, meaning that  
$N_i(\varepsilon)$ has pointwise pinched curvature. 

Consider the manifold $\widehat{N_i(\varepsilon)}$ obtained 
topologically from $N_i(\varepsilon)$ by capping off boundary spheres by balls. We will show  
that $\widehat{N_i(\varepsilon)}$   also  admits a pointwise pinched  Riemannian metric.

Let $S$ be a boundary component of $N_i(\varepsilon)$ and 
$F$ be a collar of $S$ in $N_i(\varepsilon)$ which is saturated by 
curvature leaves. In particular, $F$ is diffeomorphic to 
$S^{n-1}\times [0,1]$. As $F$ is simply connected, the inclusion $F\subset M$  could be lifted 
to $\widetilde{M}$ and thus $F$ is embedded into $\mathbb H_{\kappa}^{n+1}$.  
We say that $F$ has {\em width} $w$ if the distance between the 
two boundary components is $w$.

Before to proceed, recall that the {\em envelope} of a one-parameter family of hypersurfaces in 
$\mathbb H_{\kappa}^{n+1}$ is a hypersurface tangent to each member of the family (see \cite{Ei} for the case of surfaces). 
Alternatively, it is the boundary of the region filled by the family of hypersurfaces. If the family is given by 
equations $F_t(x)=0$, depending smoothly on the parameter $t$, then its envelope is the set of points $x$ 
for which there exists $a$ such that $F_a(x)=\left.\frac{\partial F_t}{\partial t}\right|_{t=a}(x)=0$. 
The envelope of a one-parameter family of hyperspheres is usually called a {\em canal} hypersurface in the literature. 
 
 We will first need the following lemma, whose proof is postponed to section \ref{proofcap} below: 

\begin{lemma}\label{cap}
\begin{enumerate}
\item Assume $H(S) >0$. If the width $w$ of the collar $F$ is 
small enough then we can cap off smoothly $F$ along the boundary component 
$S$ by attaching  a hyperspherical cap $\mathcal H^+(S)$ using an isometry between  its boundary 
$\partial \mathcal H^+(S)$ and $S$ and 
of radius 
\[d=\frac{1}{\sqrt{\kappa}}{\rm arctanh} \sqrt{\frac{\kappa}{H(S)+\kappa}}=
\frac{1}{2\sqrt{\kappa}}\log\left(\frac{1+\sqrt{\frac{\kappa}{H(S)+\kappa}} }{1-\sqrt{\frac{\kappa}{H(S)+\kappa}}}\right)\] 
so that $F\cup_S \mathcal H^+(S)$ embeds isometrically 
in $\mathbb H_{\kappa}^{n+1}$. 
\item If $H(S) \leq 0$ then there is no hyperspherical cap tangent to $F$  along $S$. 
\end{enumerate}
\end{lemma}

We suppose from now on that $H(S)>0$. 
Let now $\widehat{N_i(\varepsilon)}$ denote the manifold obtained 
from $N_i(\varepsilon)$ by adding to each boundary component $S$ 
the corresponding spherical cap $\mathcal H^+(S)$. Its diffeomorphism type is uniquely determined, independently on the 
choice of the orientation preserving isometries used for gluing. 

Note that both $F$ and $\mathcal H^+(S)$ are embedded 
in $\mathbb H_{\kappa}^{n+1}$, because they are simply connected. By choosing a collar $F$ of small enough width 
we can assume that their union $F\cup \mathcal H^+(S)$ remains embedded in  $\mathbb H_{\kappa}^{n+1}$. We actually only need 
this union to be immersed in $\mathbb H_{\kappa}^{n+1}$.  Then, the Riemannian metrics on $F$ and $\mathcal H^+(S)$
are restrictions of the metric induced on their union by the embedding  
$F\cup_S \mathcal H^+(S)\hookrightarrow \mathbb H_{\kappa}^{n+1}$. 

We define a tensor on $\widehat{N_i(\varepsilon)}$ to be the 1-QC Riemannian metric on points of  
$N_i(\varepsilon)$ and the Riemannian metric on $F\cup_S \mathcal H^+(S)$ which is induced by its embedding in $\mathbb H_{\kappa}^{n+1}$. 
Since these two metrics coincide on $F$ this tensor is a Riemannian metric on $\widehat{N_i(\varepsilon)}$.

The regularity of the metric depends on the regularity of the embedding $F\cup_S \mathcal H^+(S)\hookrightarrow \mathbb H_{\kappa}^{n+1}$. Thus the metric is $\mathcal C^{\infty}$ everywhere except possibly at the joint spheres 
$\partial N_i(\varepsilon)$ where the metric might be only  of class $\mathcal C^1$, because  
$F\cup_S  \mathcal H^+(S)$ is only a $\mathcal C^1$-submanifold of $\mathbb H_{\kappa}^{n+1}$, in general. 
However, an arbitrarily small perturbation of the embedding (or the metric) 
by an infinitely flat function along a small collar of the boundary sphere $S$ will make it smooth. Recall that an infinitely flat  
function is a smooth real function which vanishes on $(-\infty, 0]$ and equals $1$ on $[\delta, \infty)$.

Further, we need to introduce another metric invariant of a 
Riemannian manifold. Recall from \cite{Nik} that the 
{\em pointwise anisotropy}  of the metric is the quantity: 
\[I(p)=\sup_{\sigma} \left|K_p(\sigma)- \frac{R(p)}{n(n-1)}\right|,\] 
where $K_p(\sigma)$ is the curvature of the plane $\sigma$ at $p$ 
and $R(p)$ is the scalar curvature at $p$. Furthermore, the 
{\em integral anisotropy} of the manifold $N$ is  the number 
\[\int_NI(p)\;  d\;{\rm vol}\]

\begin{lemma}\label{niko}
Assume $H(S) >0$. 
For $n\geq 3$  there exist two constants $C_1,C_2$ such that for any $\delta >0$ 
and for any for small enough $\varepsilon$ we have:
\begin{enumerate}
\item ${\rm vol}(\widehat{N_i(\varepsilon)}) \geq C_1 >0$;    
\item ${\rm diam}^2(\widehat{N_i(\varepsilon)})\sup_{p\in \widehat{N_i(\varepsilon)}, \sigma}|K_p(\sigma)| < C_2$; 
\item and the integral isotropy of $\widehat{N_i(\varepsilon)}$ is smaller than $\delta$.   
\end{enumerate}
\end{lemma}

The proof of Lemma \ref{niko} is deferred to section \ref{proofniko}. 

The last ingredient needed is the following result of Nikolaev 
(see \cite{Nik}):
\begin{proposition}\label{Nikolaev}
 Given 
 any constants $C_1,C_2$ there exists 
some $\delta(C_1,C_2,n) >0$ such that 
any closed $n$-manifold $N$ satisfying
${\rm vol}(N) \geq C_1 >0$,  
${\rm diam}^2(N)\sup_{p\in N, \sigma}|K_p(\sigma)| < C_2$,  
whose integral isotropy is smaller than $\delta$  
is diffeomorphic to a space form. 
\end{proposition}

\vspace{0.2cm}
{\em End of proof of Proposition \ref{isotrop}.}
Recall that  for every $\nu>0$ there exists  $\epsilon(\nu) >0$  such that  
$|N(p)-H(p)| \leq  \nu $, for any $p\in N_i(\varepsilon)$. 
Let  $\nu>0$ be such that  $\nu \cdot {\rm vol}(M) < \delta(C_1,C_2)$. We choose $\varepsilon < \varepsilon(\nu)$.  
Then, the Riemannian  manifold  $\widehat{N_i(\varepsilon)}$ satisfies all conditions of Proposition \ref{Nikolaev}. 
Thus $\widehat{N_i(\varepsilon)}$ is diffeomorphic to a 
space form and our claim follows.

\subsection{Proof of Lemma \ref{cap}}\label{proofcap}
The image of the collar $F$ in $\mathbb H_{\kappa}^{n+1}$ by the immersion is a canal hypersurface. 
This was already proved in \cite{Pi,GM} for the case where $\kappa=0$. 

Assume now that $\kappa >0$. 
We want to show that each curvature leaf $S$ the hypersurface $F$ is tangent to a fixed 
round hypersphere $\mathcal H(S)$. 
Let $\psi$ be the unitary normal vector field on the image of 
$F$ in $\mathbb H_{\kappa}^{n+1}$ and $\overline{\nabla}$ be the 
Levi-Civita connection on $\mathbb H_{\kappa}^{n+1}$. 
The Weingarten (or shape) operator $L_{\psi}$ is defined as 
$L_{\psi}X=(\overline{\nabla}_X\psi)^{T}$, where the superscript $^T$ means 
the tangent part. Moreover, $g(L_{\psi}X,Y)=h(X,Y)$, where $X,Y$ are tangent to $M$ and 
$h$ is the second fundamental form of $M$. It follows that 
$L_{\psi}X=-\sqrt{H+\kappa}\cdot X$ 
for any vector field $X$ tangent to the curvature leaf $S$.

\begin{lemma}
The image of a curvature leaf $S$ with $H(S) >0$  by the immersion into $\mathbb H_{\kappa}^{n+1}$ 
is a round sphere. Furthermore, either the focal locus of the image of $F$ is empty or else there is a point 
$q\in \mathbb H^{n+1}_{\kappa}$ such that a hypersphere
$\mathcal H(S)$ with center $q$ contains $S$. 
\end{lemma} 
\begin{proof}
This seems to be widely known (see \cite{GM} for $\kappa=0$). 
The first part follows from the case $\kappa=0$, as balls in 
$\mathbb H^{n+1}_{\kappa}$ are conformally equivalent 
to balls in the Euclidean space and curvatures leaves are sent into rounds spheres. 

An alternate proof goes as follows. According to \cite{CR} the focal locus of $F$ is diffeomorphic 
to an interval, because  the focal points have multiplicity $n-1$.  
An endpoint of the focal locus of $F$ 
is equidistant to the points of $S$, since the derivative of the focal map (\cite{CR}, section 1.e, 
Thm. 2.1 and proof of Thm. 3.1)) is trivial. This shows that $S$ is contained in a 
hypersphere $\mathcal H(S)$.

Still another proof of this statement 
could be found in \cite{IPS} in a slightly different wording. 
\end{proof}

In order to compute the radius of the hypersphere $\mathcal H(S)$,  we
recall that  for a unit speed geodesic $\gamma$ parameterized by 
$[0,l]$ issued from  a point $p$ and 
normal to the submanifold $W$ of $\mathbb H_{\kappa}^{n+1}$ 
the Jacobi field $Y(t)$ along $\gamma$ is a $W$-Jacobi field 
if $Y(0)$ is tangent to $W$ at $p$, $Y(t)$ is orthogonal to $\dot{\gamma}(t)$ 
for all $t$, and 
$\nabla_{\dot{\gamma}(0)} Y(0) + L_{\dot{\gamma}(0)}Y(0)$ is orthogonal 
to $W$, where $L_{\dot{\gamma}(0)}$ is the Weingarten (or shape) 
operator of $W$. The point $q$ on $\gamma$ is called a focal 
point of $W$ along $\gamma$ if there exists a non-trivial 
$W$-Jacobi field along $\gamma$ vanishing at $q$. 

If $X_1,X_2,\ldots,X_n$ is a set of parallel vector fields along $\gamma$ 
which form along with $\dot{\gamma}$ a basis of the ambient  
tangent space to $\mathbb H_{\kappa}^{n+1}$, then   
a Jacobi field on $\mathbb H_{\kappa}^{n+1}$ along $\gamma$ 
has the form (see \cite{CE}): 
\[ Y(t)=\sum_{i=1}^{n} (a_i \sinh \sqrt{\kappa} t+ 
b_i \cosh \sqrt{\kappa} t) X_i(t),\]
where $a_i,b_i\in\mathbb R$.  

An immediate computation 
yields the fact that focal points of $S$ along 
geodesics pointing in the direction of 
$\psi$  exist  if and only if $H(S)>0$, in which case
they are all at the same distance: 
\[d=\frac{1}{\sqrt{\kappa}}{\rm arctanh} \sqrt{\frac{\kappa}{H+\kappa}}\]

Moreover $\mathcal H(S)$ and $F$ are tangent along $S$ since  they have the same 
normal vector field $\psi$ at the points of $S$. 

Eventually, $S$ bounds two balls in $\mathcal H(S)$ and one of them is a spherical cap 
$\mathcal H^+(S)$ with the property $F\cup_S\mathcal H^+(S)$  
is $\mathcal C^1$.  This proves Lemma \ref{cap}.

\begin{remark}
We have results of similar nature when $H(S) \leq 0$. The focal locus of  $F$ is empty as 
the focal map is only defined when $\left|\frac{\kappa}{H(S)+\kappa}\right| <1$.
Further, the normal geodesics in the direction 
$\pm \psi$ diverge. Nevertheless there exists a totally umbilical embedded  
$n$-plane of  constant negative  intrinsic curvature $H(S)$ 
which plays this time the role of the round sphere $\mathcal H(S)$. 
However, now the cap $\mathcal H^+(S)$ is unbounded.
These are called {\em equidistant planes} (or hypersurfaces) and were 
first considered in \cite{CR1}. In fact they are the locus of points at 
a given distance  $r$ from a hyperbolic hyperplane 
of $\mathbb H^{n+1}_{\kappa}$, or a horosphere respectively.  
For instance, when $H(S)=0$, $\mathcal H(S)$ is a 
horosphere centered at the point at 
infinity where all normal geodesics to $S$ abut. 
Further, when $H(S) <0$  we have an equidistant plane  for the distance 
$r=  {\rm arcosh}\left(\frac{1}{\sqrt{|H(S)|}}\right)$.
\end{remark}

\subsection{Proof of Lemma \ref{niko} }\label{proofniko}
The argument used in the proof of Lemma \ref{compact} shows that 
the measure of the spherical cap $\mathcal H^+(S)$ is at least that of the hyperbolic ball 
$B_{\lambda}$ lying in a $n$-plane and having the same boundary sphere $S$, and thus uniformly 
bounded from below by  the positive constant $C_1(\lambda_{\max}(\varepsilon),\kappa, n)$.
Therefore, the volume of $\widehat{N_i(\varepsilon)}$ 
is uniformly bounded from below by $C_1(\lambda_{\max}(\varepsilon), \kappa, n) >0$.

Each boundary component of $N_i(\varepsilon)$ is either {\em separating} $M$ in two disjoint components or 
{\em non-separating}. The separating spheres are of two types: either one separated component is  
simply connected, or else none of the components are simply connected. 
The boundary component is accordingly called trivial and respectively non-trivial.  

Each non-separating or separating non-trivial boundary component induces a non-trivial splitting of the 
fundamental group $\pi_1(M)$ as a free amalgamated product. It follows that their total number is bounded from above by the 
maximal number $r(\pi_1(M))$ of factors in a free splitting of the 
fundamental group $\pi_1(M)$. Further, according to Remark \ref{separating}, the  number of trivial separating  boundary components  is uniformly bounded by $\frac{{\rm vol}(M)}{C_1(\lambda_{\max}(\varepsilon),\kappa,n)}$. 
Moreover, each spherical cap $\mathcal H^+(S)$ lies on a hypersphere of radius $d$, where $d$ is given in Lemma \ref{cap}.  
It follows that the diameter of $\widehat{N_i(\varepsilon)}$ is  uniformly bounded  from above by 
\[ C_2=C_2({\rm vol}(M),\lambda,H, \kappa,n)= 
r(\pi_1M)\cdot {\rm diam}(M) + 2\pi d\cdot \frac{{\rm vol}(M)}{C_1(\lambda_{\max}(\varepsilon),\kappa,n)},\]  
while the volume $\widehat{N_i(\varepsilon)}$ is at least  
$C_1=C_1(\lambda_{\max}(\varepsilon), \kappa, n)$. Observe that $d$ is uniformly bounded 
in terms of $|\inf_{p\in M}H|$. 

Now hyperspheres $\mathcal H(S)$  of radius $d$ in $\mathbb H^{n+1}_{\kappa}$ have 
constant sectional curvature. As they are totally umbilical, their principal curvatures 
are all equal to $\frac{\sqrt{\kappa}}{{\rm tanh}(\sqrt{\kappa}d)}$. It follows that the sectional curvature 
of the hypersphere $\mathcal H(S)$ is equal to $H(S)$. By choosing the 
smoothing collar $F$ of $S$  of small enough width we obtain:   
\[\sup_{p\in \widehat{N_i(\varepsilon)}, \sigma}|K_p(\sigma)|
\leq
\max 
(\sup_{p\in M}|H|,\sup_{p\in M}|N|)\]

On the other hand the pointwise anisotropy $I(p)$ is non-zero 
only on the subset $N_i(\varepsilon)$, as spherical caps have 
constant curvature. Thus the integral anisotropy 
of $\widehat{N_i(\varepsilon)}$ is 
bounded by above by $\nu\int_M |H| d{\rm vol}$. 
The claim of Lemma \ref{niko} follows.

\begin{remark}
We don't know whether  pieces $N_i(\varepsilon)$ 
for small enough $\varepsilon$ can be assigned a well-defined 
sign according to the curvature at their  
isotropic points. This would permit to use 
Gromov's pointwise pinching result in negative curvature (see \cite{Gro})
under bounded volume assumptions.
This would be so if locally the dimension of $C$ were at least 3, 
by F. Schur (\cite{Sch}). 
\end{remark}

\begin{remark}
Isotropic points where  the sectional curvature is negative can only be approached by leaves $S$ with $H(S) <0$ 
and the method considered above cannot provide an isometric 
capping off for the boundary components. 
For instance, if some negatively curved Gromov-Thurston 
manifold (see \cite{GT})  admits a spine which can be realized by 
points of constant sectional curvature, then we could provide 
examples of $1$-QC manifolds which are not obtained by gluing together space forms. 
Recall that a spine for a closed manifold $M$ is a polyhedron $P\subset M$ with the property that 
$M-D^n$ collapses onto $P$, where $D^n\subset M$ denotes an $n$-ball. A typical spine could be geometrically 
constructed using the cut locus of nice metrics. 
\end{remark}

\section{More about the equivariant immersions of $1$-QC manifolds}
Throughout this section $M$ is a closed orientable $n$-manifold endowed with a $1$-QC Riemannian metric, which is assumed 
conformally flat when $n=3$.  
\begin{proposition}\label{immerse3}
Suppose that $n=3$ and $\kappa$ is such that $H+\kappa >0$ on $M$.
\begin{enumerate}
\item If $N+\kappa\neq 0$ then there exists an unique isometric immersion 
$f:\widetilde{M}\to \mathbb H_{\kappa}^{n+1}$, up  
to an isometry of $\mathbb H_{\kappa}^{n+1}$.
\item Assume that $N+\kappa=0$. Then the set of such 
equivariant isometric immersions of a component $E$ of $M-C$ 
is in bijection with the set of paths in the space of complex structures 
over the surface $L$ corresponding to a curvature leaf.
\end{enumerate} 
\end{proposition}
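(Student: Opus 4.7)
The plan is to reduce the question, via the fundamental theorem for submanifolds of constant-curvature spaces as used in Proposition \ref{immerse}, to classifying symmetric $(0,2)$-tensor fields $h$ on $\widetilde M$ satisfying the Gauss and Codazzi equations; the equivariance under $\pi_1(M)$ is then automatic by the local-uniqueness argument already present in the proof of Proposition \ref{immerse}. Contracting the Gauss equation with the metric gives
\[ \text{Ric} = -2\kappa\, g + \text{tr}(h)\, h - h^2, \]
which in dimension three is equivalent to Gauss. Using the explicit $1$-QC form $\text{Ric} = (H+N)g + (N-H)\eta\otimes\eta$ and diagonalizing $h$ in an orthonormal frame $\{e_1,e_2,e_3\}$ with eigenvalues $\mu_1,\mu_2,\mu_3$, the off-diagonal components read $(N-H)\,g(\widehat{\xi},e_i)\,g(\widehat{\xi},e_j)=0$ for $i\neq j$. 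Since $H\neq N$ on $M-C$, this forces $\widehat{\xi}$ to be an eigenvector of $h$; fixing $e_3=\widehat{\xi}$, the diagonal equations become
\[ \mu_1(\mu_2+\mu_3)=\mu_2(\mu_1+\mu_3)=H+N+2\kappa,\qquad \mu_3(\mu_1+\mu_2)=2(N+\kappa). \]

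For part (1), subtracting the first two equations yields $(\mu_1-\mu_2)\mu_3=0$. The hypothesis $N+\kappa\neq 0$ combined with $H+\kappa>0$ rules out $\mu_3=0$---otherwise the third equation would force $N+\kappa=0$---so $\mu_1=\mu_2=\pm\sqrt{H+\kappa}$ and $\mu_3=(N+\kappa)/\mu_1$. This is exactly $h=\pm h_0$, the canonical tensor of Proposition \ref{immerse}. Smoothness of $h$ on $M-C$ makes the sign locally constant and hence globally constant on each connected component; an overall sign change is realized by an orientation-reversing isometry of $\mathbb H_{\kappa}^4$, and Codazzi is automatic for $h_0$ as in Proposition \ref{immerse}. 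Hence $f$ is unique up to an isometry of $\mathbb H_{\kappa}^4$.

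For part (2), when $N+\kappa=0$ the third equation reads $\mu_3(\mu_1+\mu_2)=0$, and a short check shows the branch $\mu_3\neq 0$ forces $H+\kappa=0$, which is excluded. So $\mu_3\equiv 0$ and the only pointwise constraint is $\mu_1\mu_2=H+\kappa>0$, leaving two parameters at each point: the ratio $\mu_1/\mu_2$ and the rotation angle of the horizontal eigenframe. Geometrically $\mu_3=0$ means the integral curves of $\widehat{\xi}$ are geodesics in $\mathbb H_{\kappa}^4$ and the $\widehat{\xi}$-flow carries leaves to leaves isometrically. Combined with the product decomposition $E\cong L\times\R$ from Proposition \ref{dicho}, the entire immersion of $E$ is recovered from the one-parameter family $t\mapsto h|_{L_t}$, where $L_t$ is the leaf at parameter $t$. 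At each $t$, $h|_{L_t}$ is a positive symmetric endomorphism of $TL_t$ of determinant $H+\kappa$; factoring out the scalar $\sqrt{H+\kappa}$, the remaining unimodular symmetric positive endomorphism is canonically identified, via the standard parameterization of $SL(2,\R)/SO(2)\cong\mathbb H^2$, with a Beltrami differential on $L$, equivalently with an almost-complex structure on $L$ compatible with the conformal class of the induced metric. Varying $t$ produces a path $t\mapsto J_t$ in the space of complex structures on $L$, and the converse---passing from such a path to an equivariant immersion---follows from the fundamental theorem for submanifolds.

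The main obstacle I anticipate is verifying that Codazzi contributes no hidden PDE constraint beyond smoothness of $t\mapsto J_t$. Codazzi within each $2$-dimensional leaf is automatic for dimension reasons, so the content is concentrated in the transverse direction: one must compute $\nabla_{\widehat{\xi}} h$ along integral curves of $\widehat{\xi}$ using the totally umbilical character of the leaves and the geodesic character of $\widehat{\xi}$ in both $\widetilde M$ and $\mathbb H_{\kappa}^4$, and compare it with $(\nabla_X h)(\widehat{\xi},\cdot)$ for horizontal $X$. In contrast, the decisive step in part (1) is the off-diagonal identity forcing $\widehat{\xi}$ to be an eigenvector of $h$, which is the natural substitute in dimension three for Cartan's theorem invoked earlier in the paper.
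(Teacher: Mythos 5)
Your argebraic analysis of the Gauss equation reproduces the paper's argument: the paper writes the Gauss equations in components $h_{ij}$ in an orthonormal frame $\{X_1,X_2,X_3=\widehat\xi\}$ and derives the dichotomy $h_{11}=h_{22}$ (giving $h=\pm h_0$) versus $\nu(h_{11}+h_{22})=\mu h_{33}$ (forcing $\nu=0$, $h_{3j}=0$, $h_{11}h_{22}-h_{12}^2=\mu$), whereas you pass to the traced Gauss equation (legitimately, since in dimension three the Riemann tensor is recovered from Ricci) and diagonalize $h$; the key observation that $\widehat\xi$ must be an eigenvector of $h$, which you phrase as the substitute for Cartan's theorem, is the same content as the paper's homogeneous off-diagonal equations. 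Your eigenvalue bookkeeping, the elimination of the spurious branches in both parts, and the identification of the unimodular positive part of $h|_{\mathcal D}$ with an almost-complex structure via $SL(2,\R)/SO(2)$ all match the paper's use of Labourie, Prop.~2.5. Two caveats. First, the side remark ``$\mu_3=0$ means the integral curves of $\widehat\xi$ are geodesics in $\mathbb H_\kappa^4$'' is false: $\mu_3=h(\widehat\xi,\widehat\xi)=0$ kills only the normal component of $\overline\nabla_{\widehat\xi}\widehat\xi$; the tangential part $\nabla_{\widehat\xi}\widehat\xi$ survives unless $\widehat\xi$ happens to be geodesic in $M$, which for a $1$-QC metric is equivalent to $\xi$ being Killing and is not assumed here. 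Likewise the $\widehat\xi$-flow need not move leaves isometrically when $\alpha\neq 0$. These slips do not affect the algebra but the geometric picture you describe should not be relied on. Second, the transverse Codazzi component $(\nabla_{\widehat\xi}h)(Y,Z)=(\nabla_Y h)(\widehat\xi,Z)=-\alpha\,h(Y,Z)$ is a first-order evolution equation in the leaf parameter and is a genuine constraint; you flag it honestly as the unresolved obstacle. The paper's own proof does not discharge this either --- it records only the leafwise (Gauss and Codazzi) part via Labourie and gives no argument that an arbitrary smooth path of complex structures lifts to a solution of the full three-dimensional Codazzi system. So you have matched the paper's argument and, in fact, correctly located the point where both proofs are incomplete as stated.
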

\begin{proof}
If $\{X_1,X_2,X_3=\xi\}$  is a local orthonormal basis and $h_{ij}$ are components of the second fundamental form of the immersion then Gauss equations read:
\[ h_{11}h_{22}-h_{12}^2=\mu, 
 h_{11}h_{33}-h_{13}^2=\nu,  h_{22}h_{33}-h_{23}^2=\nu\]
\[ h_{23}h_{11}=h_{12}h_{13}, h_{13}h_{11}=h_{12}h_{23},
h_{12}h_{33}=h_{13}h_{23}\]
where $\mu=H+\kappa$, $\nu=N+\kappa$.
The homogeneous equations above imply that 
either $h_{11}=h_{22}$ or else $\nu(h_{11}+h_{22})=\mu h_{33}$.
The first alternative leads to the immersion arising from the 
form $h$ in Proposition \ref{immerse}, as happened for $n=4$.
The second one coupled with the non-homogeneous equations 
leads to $\nu=0$. In this case we obtain the solution consisting of 
$h_{ij}$ with  
\[ h_{33}=h_{13}=h_{23}=0\]
while the other components are satisfying:  
\[h_{11}h_{22}-h_{12}^2=\mu\]
It follows that  the restriction of $h$ to each lift $\pi^{-1}(L)$ 
of a curvature leaf $L\subset E$ (of constant curvature $H_0$) 
in $\widetilde{M}$  is the second fundamental form of 
an immersion $f_{\lambda_0}:\pi^{-1}(L)\to  \mathbb H_{\kappa}^n$. 
This immersion $f_{\lambda_0}$ is a slice of the 
immersion $f:\widetilde{M}\to \mathbb H_{\kappa}^{n+1}$,  
with respect to the foliation of $\mathbb H_{\kappa}^{n+1}$ by 
hyperbolic hyperplanes $\mathbb H_{\kappa}^n$. 

In (\cite{Lab}, Prop. 2.5) one identified 
the 2-tensors verifying the Gauss and Codazzi equations for 
a constant curvature metric on a surface with the complex structures on the surface. Therefore, the complex structures of the slices of $E$  give 
us a path of complex structures on the slice surface. 
Recall that the sectional curvature of the slice surface 
is the not necessarily constant function $\lambda$ on $E$, 
so that we need to rescale  it in every slice in order to 
fit exactly into the framework of (\cite{Lab}, Prop 2.5). 
\end{proof}

Observe that the immersion $f:\widetilde{M}\to \mathbb H_{\kappa}^{n+1}$ 
is covered by an equivariant immersion into the (unit) tangent bundle  
$\hat{f}:\widetilde{M}\to T\mathbb H_{\kappa}^{n+1}$, defined by 
\[ \hat{f}(p)=(f(p), n_{f(p)})\in  T\mathbb H_{\kappa}^{n+1}\]
where $n_{f(p)}$ denotes the unit (positive) normal vector at 
$f(\widetilde{M})$ at the point $f(p)$ associated to the local sheet 
defined around $p$. 

Let us further consider the hyperbolic Gauss map $G:T\mathbb H_{\kappa}^{n+1}\to S^n$ 
which sends the pair $(p,v)$, where  $p\in \mathbb H_{\kappa}^{n+1}, 
v\in T_p\mathbb H_{\kappa}^{n+1}$ to the point at infinity to which the 
geodesic issued from $p$ in direction $v$ will reach the boundary.  
Here $S^n$ is identified with the boundary at infinity  
$\partial\mathbb H_{\kappa}^{n+1}$ of the hyperbolic space.

\begin{proposition}\label{conf}
Suppose that  $H+\kappa >0$, $N+\kappa>0$ and $n\geq 3$.
The composition $D_{\infty}=G\circ \hat{f}:\widetilde{M}\to S^n$ is an immersion.
Moreover the flat conformal structure defined by the $1$-QC metric on $M$ 
is the one given by the developing map $F$ and the holonomy homomorphism $\rho$.
\end{proposition}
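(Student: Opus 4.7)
The plan is to work in the Minkowski model
$\mathbb{H}^{n+1}_\kappa = \{x \in \mathbb{R}^{1,n+1} : \langle x,x\rangle = -1/\kappa,\ x_0>0\}$,
identifying the ideal boundary $S^n$ with the projectivized future null cone.
A direct computation with the geodesic
$\gamma(t) = \cosh(\sqrt{\kappa}\,t)\,x + \sinh(\sqrt{\kappa}\,t)\,v/\sqrt{\kappa}$
shows that the hyperbolic Gauss map takes the explicit form $G(x,v) = [\sqrt{\kappa}\,x + v]$
in projective coordinates; hence $F(p) = [\sqrt{\kappa}\,f(p) + n(p)]$, with $n$ the unit
normal to $f(\widetilde{M})$ from Proposition~\ref{immerse}.

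To verify that $F$ is an immersion I compute $dF$. Using the Gauss--Weingarten identity
$\nabla^{\mathbb{R}^{1,n+1}}_{f_*X} n = -L\,f_*X$
(the ambient flat connection and the hyperbolic connection agree on vectors tangent to
$\mathbb{H}^{n+1}_\kappa$ since their difference is proportional to the position vector
and $\langle f_*X, n\rangle = 0$), one obtains
\[
 dF(X) \equiv (\sqrt{\kappa}\,I - L)\,f_*X \pmod{\mathbb{R}(\sqrt{\kappa}\,f(p)+n(p))}.
\]
Exploiting $\langle \sqrt{\kappa}\,f(p)+n(p), f(p)\rangle = -1/\sqrt{\kappa} \neq 0$ together with
$\langle (\sqrt{\kappa}I - L)f_*X,\,f(p)\rangle = 0$ shows the residue vanishes precisely
when $(\sqrt{\kappa}\,I - L)f_*X = 0$; that is, $\ker dF_p = \ker(\sqrt{\kappa}\,I - L)$.
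By Proposition~\ref{immerse}, the principal curvatures of $L$ are $\sqrt{H+\kappa}$ of
multiplicity $n-1$ along $\mathcal{D}$ and $(N+\kappa)/\sqrt{H+\kappa}$ of multiplicity one
along $\widehat{\xi}$; after fixing the orientation of $n$ appropriately, the hypotheses
$H+\kappa>0$ and $N+\kappa>0$ keep the spectrum of $L$ away from $\sqrt{\kappa}$, whence
$F$ is an immersion.

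For the second assertion, the equivariance $F(\gamma p) = \rho(\gamma)F(p)$ is immediate:
$f$ is $\rho$-equivariant by Proposition~\ref{immerse} and elements of $SO(n+1,1)$ act
isometrically on $\mathbb{H}^{n+1}_\kappa$ and extend conformally to $S^n$, commuting with
$G$. To see that $F$ represents the flat conformal structure of $(M,[g])$, I would compute
the pull-back of the canonical conformal class on $S^n$ through the visual metric anchored
at the base point $f(p)$; up to a positive scalar depending only on $p$, this yields
$g\bigl((\sqrt{\kappa}\,I - L)\cdot,\,(\sqrt{\kappa}\,I - L)\cdot\bigr)$ on $T_p\widetilde M$.
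Recognizing this as a pointwise conformal multiple of $g$ uses both the quasi-umbilical
structure of $L$ and the description of $f(\widetilde M)$ as a canal hypersurface
(Lemma~\ref{cap}) whose horizontal leaves map to round spheres in $S^n$. Uniqueness of
the developing map of a conformally flat structure up to post-composition by Möbius
transformations then identifies $F$ and $\rho$ with the developing map and holonomy.

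The principal obstacle is this final conformality step: because $\sqrt{\kappa}\,I - L$ has
two distinct eigenvalues $\sqrt{\kappa} - \sqrt{H+\kappa}$ and
$\sqrt{\kappa} - (N+\kappa)/\sqrt{H+\kappa}$ away from $C$, the pull-back form looks
anisotropic with respect to $g$ when examined naively. The resolution must exploit the
base-point freedom in the visual metric on $S^n$, combined with the structural identities
relating $H$, $N$ and the second fundamental form of the $1$-QC immersion, to show that
the two anisotropies precisely compensate, leaving only a pointwise conformal scaling.
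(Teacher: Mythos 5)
Your computation of the immersion part is correct and amounts to an explicit version of the paper's one-line assertion that $F$ is an immersion because $f(\widetilde M)$ is locally convex: in the Minkowski model $d\bigl(\sqrt{\kappa}\,f+n\bigr)(X)=(\sqrt{\kappa}\,I-L)f_*X$, the null line gives no extra kernel because $\bigl\langle \sqrt{\kappa}f+n,\,f\bigr\rangle\neq 0$ while $(\sqrt{\kappa}I-L)f_*X\perp f$, and after choosing the normal that makes $L$ negative definite (which local convexity allows), $\sqrt{\kappa}$ is not an eigenvalue. The equivariance argument is also correct and is what the paper says.

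The conformality step is a genuine gap, and the ``resolution'' you sketch at the end cannot work. Base-point freedom in the visual metric is precisely the statement that all visual metrics on $S^n$ are conformally equivalent -- that is what the canonical conformal class \emph{is} -- so it buys no additional flexibility; changing the base point merely rescales the section of the null cone and multiplies your pull-back form by a positive function. Thus $F^*[g_{S^n}]=\bigl[g\bigl((\sqrt{\kappa}I-L)\,\cdot,\,(\sqrt{\kappa}I-L)\,\cdot\bigr)\bigr]$, and this is conformal to $[g]$ if and only if $(\sqrt{\kappa}I-L)^2$ is a multiple of the identity, i.e.\ $\bigl|\sqrt{\kappa}-\sqrt{H+\kappa}\bigr|=\bigl|\sqrt{\kappa}-(N+\kappa)/\sqrt{H+\kappa}\bigr|$, which fails at generic non-umbilic points. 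As a concrete test, take the distance tube of radius $r$ around a geodesic in $\mathbb H_{\kappa}^{n+1}$ (a globally $1$-QC hypersurface with $H+\kappa=\kappa\coth^2(\sqrt{\kappa}r)>0$, $N+\kappa=\kappa>0$). There $g=\cosh^2(\sqrt{\kappa}r)\,ds^2+\kappa^{-1}\sinh^2(\sqrt{\kappa}r)\,d\Omega^2$, while $F^*g_{S^n}=e^{-2\sqrt{\kappa}r}\bigl(\kappa\,ds^2+d\Omega^2\bigr)$; the ratio of the two anisotropies is $\tanh^2(\sqrt{\kappa}r)\neq 1$, and on the compact quotient $S^1\times S^{n-1}$ the conformal moduli of $[g]$ and of $F^*[g_{S^n}]$ differ by a factor $\coth(\sqrt{\kappa}r)$. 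So the hyperbolic Gauss map is not the developing map of $[g]$ in that example, and the isometric holonomy $\rho$ of Proposition~\ref{immerse} differs from the conformal holonomy of $[g]$. Note that the paper's own proof also supplies no argument here -- it writes ``it follows that'' after establishing equivariance, which is a non sequitur -- so your inability to close this step appears to reflect a real defect in the statement rather than a gap you should be expected to fill.
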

\begin{proof}
If  $H+\kappa >0$ and $N+\kappa >0$, then $f(\widetilde{M})$ is infinitesimally convex, namely the eigenvalues 
of its second fundamental form  are positive. According to a well-known result of Bishop (see \cite{Bi}) 
$f(\widetilde{M})$ is locally convex.  This implies that $D_{\infty}$ is an immersion. 
Moreover, by the arguments above  the immersion $F$ is equivariant 
with respect to the homomorphism $\rho:\pi_1(M)\to SO(n+1,1)$, this time  
interpreting $SO(n+1,1)$ as the group of M\"obius (conformal) transformations of the sphere. 
It follows that the conformally flat structure on $M$ is 
actually given by the developing map $D_{\infty}$ and the holonomy homomorphism 
$\rho$.
\end{proof}

\begin{proposition}\label{hyperbolicmetric}
To any closed $n$-manifold $M$, $n\geq 3$, endowed with a 1-QC Riemannian metric we can associate  
a hyperbolic metric on $M\times [0,\infty)$ which restricts to (a homothetic of) 
the initial $1$-QC metric on the boundary. 
\end{proposition}
\begin{proof}
Propositions \ref{immerse} and \ref{unique-immerse} for $n\geq 4$ and Proposition \ref{immerse3} for $n=3$ show that for large enough $\kappa$ there exists a unique equivariant immersion $f:\widetilde{M}\to \mathbb H_{\kappa}^{n+1}$. 
We can construct under the conditions of Proposition \ref{conf} an equivariant immersion 
$\phi:\widetilde{M}\times [0,\infty) \to \mathbb H_{\kappa}^{n+1}$
by setting: 
\[ \phi(p, t)=\exp(t n_{f(p)}).\]
This is well-defined for all $t\in [0, \infty)$, 
and it is an immersion, since $f(\widetilde{M})$ is locally convex. 
In particular, we have a constant 
curvature metric $\phi^*g_{\mathbb H_{\kappa}^{n+1}}$ induced by this immersion 
as pull-back of the usual Riemannian metric  $g_{\mathbb H_{\kappa}^{n+1}}$ on 
$\mathbb H_{\kappa}^{n+1}$. Since $\phi$ is equivariant the metric descends to 
$M\times [0,\infty)$ and it becomes of constant curvature $-1$ after rescaling. 
In order to apply Proposition \ref{conf} we have to choose 
$\kappa$ large enough. However, a different $\kappa$ will lead to the same 
hyperbolic metric. 
\end{proof}

\begin{proposition}\label{completemetric}
The hyperbolic metric $\phi^*g_{\mathbb H_{\kappa}^{n+1}}$ on $M\times [0,\infty)$ is complete 
if the map $\phi: \widetilde{M}\times [0,\infty) \to \mathbb H_{\kappa}^{n+1}$ is proper. 
In particular,  this is true if $2N < H \leq 0$.  
\end{proposition}
\begin{proof} 
Observe that $M\times [0,\infty)$ has a complete product metric constructed from the 1-QC Riemannian metric on the factor $M$ and the standard metric on $[0,\infty)$.  The  pull-back metric $\phi^*g_{\mathbb H_{\kappa}^{n+1}}$  on $M\times [0,\infty)$ is complete if and only if every countable bounded subset in this metric is also bounded in the product metric. 

Suppose that  the pull-back metric is not complete. Thus, there exists a countable set $A\subset M\times [0,\infty)$ 
which is unbounded in the product metric - i.e. its projection on $[0,\infty)$ is unbounded - while $A$ is bounded in the hyperbolic metric. Then, a lift  $\widetilde{A}$ of $A$ to $\widetilde{M}\times [0,\infty)$ is also bounded with respect 
to the lift of the hyperbolic metric. Since $\phi$ is an isometric immersion, it  contracts distances and hence 
$\phi(\widetilde{A})$ is contained in a compact metric ball $B$ in 
$\mathbb H_{\kappa}^{n+1}$. Therefore, $\phi^{-1}(B)$ is not compact 
in $\widetilde{M}\times [0,\infty)$ endowed with its usual topology, because it contains the unbounded subset 
$\widetilde{A}$ and hence $\phi$ is not proper. This settles the first claim. 

Further, Alexander  proved in (\cite{Al}, Prop.2) that an isometric hypersurface  immersion of a complete, connected 
Riemannian manifold into a simply connected complete manifold of sectional curvature at most $-\kappa$ having 
the absolute value of the eigenvalues of its second fundamental form bounded by $\sqrt{\kappa}$ must be 
an embedding. Recall that the eigenvalues of $II_{f(\widetilde{M})}$ are $\sqrt{H+\kappa}$ and $\frac{N+\kappa}{\sqrt{H+\kappa}}$. Then the immersion $f$ satisfies the hypothesis of Alexander's theorem above if 
$2N < H \leq 0$. In this case $f$ is an embedding and $f(\widetilde{M})$ is convex. It follows that 
$\phi$ is a proper embedding. 
\end{proof}

\begin{remark}
If the holonomy group $\rho(\pi_1(M))\subset SO(n+1,1)$ is not discrete, then the immersion $f$ is not proper and 
hence $\phi$ is not proper either.  
\end{remark}

\begin{remark}
A construction due to Thurston in the case $n=2$ (see \cite{KT}), further extended to all dimensions by 
Apanasov (\cite{A}), Kulkarni and Pinkall (\cite{KP}) provides a canonical $\mathcal C^{1,1}$ Riemannian 
metric on $M$ and a canonical totally geodesic stratification, using the convex hull construction on the limit set of $\rho(\pi_1(M))$. They defined  a continuous 
$\rho$-equivariant map $D_0: \widetilde{M}\to \mathbb H^{n+1}_{\kappa}$ with locally convex image (under the assumption that $S^n-D_{\infty}(\widetilde{M})$ contains at least two points).  
Specifically,  we set  $D_0(p)=q$, for $p\in \widetilde{M}$, where $q$ is the unique closest point 
to $D_{\infty}(p)$ which belongs to the intersection of all half-spaces in $\mathbb H^{n+1}_{\kappa}$ 
determined by  the round  spherical balls $S^n-B$, where $D_{\infty}(p)\subset B \subset D_{\infty}(\widetilde{M})$. 
There is then an equivariant stratification of $\widetilde{M}$, such that $D_0$ maps each stratum of $\widetilde{M}$ 
isometrically onto some pleat in $\mathbb H^{n+1}_{\kappa}$, namely some $k$-dimensional 
convex hull of a subset in $S^n$, for $1\leq k \leq n+1$. 

We believe that 1-QC manifolds have only strata of dimensions $n$ and $n-1$, namely they are channel 
M\"obius manifolds in the terminology of \cite{KP}.  This will provide an action of $\pi_1(M)$ on a $\R$-tree. 

We can extend $D_0$ and $D_{\infty}$ to an equivariant $\mathcal C^1$-immersion $D:\widetilde{M}\times (0,\infty)\to \mathbb H_{\kappa}^{n+1}$ by sending $(p,t)$ into the point sitting  at hyperbolic distance $t$ from $D_0(p)$ on the geodesic ray joining $D_0(p)$ to 
$D_{\infty}(p)$. Each slice $D(*, t)$, $t>0$, is a locally convex $\mathcal C^1$-immersion. 
Then the pull-back $D^*g_{\mathbb H^{n+1}_{\kappa}}$  gives also a hyperbolic (after rescaling) metric on 
$M\times (0,\infty)$. The relationship between the two hyperbolic structures induced by $D$ and $\phi$ 
respectively has still to be clarified. 
\end{remark}

\begin{remark}
When $n=2$ the situation is different, and it was described by Labourie
in \cite{Lab}.  The  space $\mathcal I(k_0)$ 
of codimension one isometric equivariant isometries of a 
surface of a surface $S$  of genus at least $2$  
into the hyperbolic space $\mathbb H_1^{3}$, such that 
the pull-back metric is of constant curvature $k_0\in(-1,0)$  -- 
up to left composition by isometries of 
$\mathbb H_1^{3}$ and right composition by lifts of diffeomorphisms 
isotopic to identity -- is not one point as for $n\geq 4$.  
In fact, there is a natural map of $\mathcal I(k_0)$ into 
the Teichm\"uller space $\mathcal T(S)$ which associates 
to an immersion the conformal class $\iota_0$ of the induced 
constant curvature metric. The fiber $\mathcal I(k_0,\iota_0)$ of this map 
can also be mapped homeomorphically onto the Teichm\"uller space 
$\mathcal T(S)$ by sending the class of an immersion 
to the  conformal class of its second (or third) fundamental form  
or, equivalently, of its associated complex structure. Then 
the map from  $\mathcal I(k_0)$  to the space of $\mathbb CP^1$ (i.e. 
conformally flat) structures on $S$ which associates to an immersion 
$f$ the $\mathbb CP^1$-structure given by $F$ is also a homeomorphism. 
This gives a one parameter family of parameterizations of the space of  
 $\mathbb CP^1$ structures by $\mathcal T(S)\times \mathcal T(S)$. 
  
Thus, for each $\mathbb CP^1$-structure on $S$ and $k\in (-1,0)$ 
there exists some metric of constant curvature $k$ on $S$ and an 
equivariant isometric embedding $f_k:S\to \mathbb H_1^{3}$, whose associated 
$\mathbb CP^1$ structure is the one with which we started. 
Further, the immersion $\phi_k$ associated to $f_k$ provides 
a hyperbolic structure $\mathcal M(k)$ on 
$S\times [0,\infty)$, called a geometrically finite end (see \cite{Lab}).
In \cite{Lab,Lab2} one proved that for each constant $k\in [k_0,0)$ there exists a unique incompressible embedded surface $S_{k}\subset S\times [0,\infty)$ 
of constant curvature $k$  homeomorphic to $S$ and the family   $S_{k}$ foliates 
the geometrically finite end $\mathcal M(k_0)$.  
This proves that actually $\mathcal M(k_1) \subset \mathcal M(k_2)$ 
if $k_1 > k_2$. When $k$ converges to $-1$ then 
$\mathcal M(k)$ converges to a geometrically finite end. 
The $\mathbb CP^1$-structure associated to this geometrically finite end is 
also the one from the beginning. Moreover, the finite 
boundary of $\mathcal M(0)=\cup_{k}\mathcal M(k)$ is isometric 
to a surface of constant curvature $-1$ but its embedding 
into $\mathbb H_1^3$ is a pleated surface 
along a geodesic lamination, according to Thurston (see \cite{Lab} for 
more details).  Higher dimensional versions  of Labourie's results 
concerning moduli of hyperbolic flat conformal were obtained 
by Smith in \cite{Smith}.  
\end{remark}

\bibliographystyle{plain}

\end{document}